\numberwithin{equation}{section}
\newtheorem{theorem}{Theorem}[section]
\newtheorem{corollary}[theorem]{Corollary}
\newtheorem{lemma}[theorem]{Lemma}
\newtheorem{proposition}[theorem]{Proposition}
\theoremstyle{definition}
\newtheorem{definition}[theorem]{Definition}
\newtheorem{example}[theorem]{Example}
\theoremstyle{remark}
\newtheorem*{remark}{Remark}
\newcommand{\lsup}[2]{\vphantom{#2}^{#1}{#2}}
\newcommand{\nst}[1]{\lsup{*}{#1}}
\newcommand{\asy}[1]{\lsup{\mathit{a}}{#1}}
\newcommand{\bas}[1]{\lsup{\bullet}{#1}}
\newcommand{\ras}[1]{\lsup{\rho}{#1}}
\newcommand{\col}[1]{\lsup{\mathit{s}}{#1}}
\DeclareMathOperator{\bM}{\bas{\hspace{-.5pt}\mathbf M}}
\DeclareMathOperator{\nM}{\nst{\hspace{-.5pt}\mathbf M}}
\DeclareMathOperator{\nN}{\nst{\hspace{-.5pt}\mathbf N}}
\DeclareMathOperator{\rL}{\ras{\hspace{-.5pt}\mathbf L}}
\DeclareMathOperator{\rM}{\ras{\mathbf M}}
\DeclareMathOperator{\rN}{\ras{\mathbf N}}
\DeclareMathOperator{\aL}{\asy{\hspace{-.5pt}\mathbf L}}
\DeclareMathOperator{\aM}{\asy{\mathbf M}}
\DeclareMathOperator{\aN}{\asy{\mathbf N}}
\DeclareMathOperator{\cL}{\col{\hspace{-.5pt}\mathbf L}}
\DeclareMathOperator{\cM}{\col{\mathbf M}}
\DeclareMathOperator{\cN}{\col{\mathbf N}}
\DeclareMathOperator{\sM}{\col{\!\mathcal M}}
\DeclareMathOperator{\sN}{\col{\!\mathcal N}}
\DeclareMathOperator{\Gs}{\mathcal{G}^{\mathit s}}
\DeclareMathOperator{\I}{\mathcal{I}}
\DeclareMathOperator{\Coeq}{Coeq}
\DeclareMathOperator{\Eq}{Eq}
\DeclareMathOperator*{\colim}{colim}
\DeclareMathOperator{\Diff}{\bf Diff}
\DeclareMathOperator{\bDiff}{\bas{\Diff}}
\DeclareMathOperator{\rDiff}{\ras{\Diff}}
\DeclareMathOperator{\wDiff}{\widehat{\Diff}}
\DeclareMathOperator{\ev}{ev}
\DeclareMathOperator{\Image}{Im}
\DeclareMathOperator{\tr}{Tr}
\DeclareMathOperator{\dg}{Dg}
\DeclareMathOperator{\Path}{\widehat{\mathcal{P}}}
\DeclareMathOperator{\pr}{pr}
\DeclareMathOperator{\Set}{\bf Set}
\DeclareMathOperator{\supp}{supp}
\DeclareMathOperator{\Top}{\bf Top}
\DeclareMathOperator{\EucOp}{\bf EucOp}
\DeclareMathOperator{\bEucOp}{\bas{\EucOp}}
\DeclareMathOperator{\nEucOp}{\nst{\EucOp}}
\DeclareMathOperator{\rEucOp}{\ras{\EucOp}}
\newcommand{\norm}[1]{\|{#1}\|}
\newcommand{\N}{\mathbb{N}}
\newcommand{\Q}{\mathbb{Q}}
\newcommand{\C}{\mathbb{C}}
\newcommand{\aC}{\asy{\mathbb{C}}}
\newcommand{\bC}{\bas{\mathbb{C}}}
\newcommand{\nC}{\nst{\mathbb{C}}}
\newcommand{\rC}{\ras{\mathbb{C}}}
\newcommand{\bD}{\bas{\!D}}
\newcommand{\rD}{\ras{\!D}}
\newcommand{\R}{\mathbb{R}}
\newcommand{\aR}{\asy{\mathbb{R}}}
\newcommand{\bR}{\bas{\mathbb{R}}}
\newcommand{\nR}{\nst{\mathbb{R}}}
\newcommand{\rR}{\ras{\mathbb{R}}}
\newcommand{\F}{\mathbb{F}}
\newcommand{\aF}{\asy{\mathbb{F}}}
\newcommand{\bF}{\bas{\mathbb{F}}}
\newcommand{\nF}{\nst{\mathbb{F}}}
\newcommand{\rF}{\ras{\mathbb{F}}}
\newcommand{\D}{\mathcal{D}}
\newcommand{\U}{\mathcal{U}}
\newcommand{\bI}{\partial I}
\newcommand{\dint}{\displaystyle\int}
\newcommand{\tcoprod}{\textstyle\coprod}
\newcommand{\tprod}{\textstyle\prod}
\newcommand{\tsum}{\textstyle\sum}
\newcommand{\aev}{\text{\rm a.e.}}
\newcommand{\abs}[1]{\left\lvert{#1}\right\rvert}
\newcommand{\bracket}[2]{\langle{#1}\,|\,{#2}\rangle}
\newcommand{\Cinf}{\mathit{C}^{\infty}}
\newcommand{\bCinf}[1][\infty]{\bas{\hspace{-1pt}\mathit{C}}^{#1}}
\newcommand{\nCinf}[1][\infty]{\nst{\hspace{-2pt}\mathit{C}}^{#1}}
\newcommand{\rCinf}[1][\infty]{\ras{\hspace{-1pt}\mathit{C}}^{#1}}
\newcommand{\tCinf}{\mathcal{C}^{\infty}}
\newcommand{\wCinf}{\widehat{\mathit{C}}^{\infty}}
\title{Nonstandard diffeology and generalized functions} %
\author[K. Shimakawa]{Kazuhisa Shimakawa}%
\address{ Okayama University \\ Okayama 700-8530 \\ Japan}%
\email{kazuhisa.shimakawa@gmail.com}%
\date{\today}%
\subjclass[2010]{46T30, 54C35, 55U40, 58D15}%
\keywords{Schwartz distribution, generalized function,
  diffeological space}%
\begin{document}
\maketitle
%%%%% Abstract
\begin{abstract}
  We introduce a nonstandard extension of the category of
  diffeological spaces, and demonstrate its application to
  the study of generalized functions.
  Just as diffeological spaces are defined as concrete
  sheaves on the site of Euclidean open sets, our
  nonstandard diffeological spaces are defined as concrete
  sheaves on the site of open subsets of nonstandard
  Euclidean spaces, i.e.\ finite dimensional vector spaces
  over (the quasi-asymptotic variant of) Robinson's
  hyperreal numbers.
  It is shown that nonstandard diffeological spaces form a
  category which is enriched over the category of
  diffeological spaces, is closed under small limits and
  colimits, and is cartesian closed.
  %
  % Furthermore, it is shown that the space of nonstandard
  % smooth functions on (the extension of) a Euclidean open
  % set is a smooth differential algebra that admits an
  % embedding of the differential vector space of Schwartz
  % distributions.
  % %
  % Since our algebra of generalized functions comes as a
  % hom-object in a category, it enables not only the
  % multiplication of distributions but also the composition
  % of them.
  % %
  % To illustrate the usefulness of this feature, we show
  % that the homotopy extension property can be established
  % for smooth relative cell complexes by exploiting extended
  % maps.
  Furthermore, it is shown that the differential vector
  space of Schwartz distributions on an open subset $U$ of a
  Euclidean space can be embedded into the space of
  nonstandard smooth functions defined on the natural
  extension of $U$.
  Since our algebra of generalized functions comes as a
  hom-object in a category, it enables not only the
  multiplication of distributions but also the composition
  of them.
  To illustrate the usefulness of this feature, we show that
  the homotopy extension property can be established for
  smooth relative cell complexes by exploiting extended
  maps.
\end{abstract}

%%%%%%%%%%%%%%%%%%%%%%%%%%%%%%%%%%%%%%%%%%%%%%%%%%%%%%%%%%%%
% Section 1
%%%%%%%%%%%%%%%%%%%%%%%%%%%%%%%%%%%%%%%%%%%%%%%%%%%%%%%%%%%%
\section{Introduction}
Schwartz distributions make it possible to differentiate
functions whose derivatives do not exist in the classical
sense, e.g.\ locally integrable functions.
They are widely used in the theory of partial differential
equations, where it may be easier to establish the existence
of weak solutions than classical ones, or appropriate
classical solutions may not exist.
%
% They are also important in physics and engineering where
% many problems naturally lead to differential equations whose
% solutions or initial conditions are distributions.
% (multiplication)
However, there is a serious drawback that distributions
cannot be multiplied nor composed except for very special
cases.  Thus, the formulas such as
\[
  (e^{\delta(x)})' = e^{\delta(x)}\delta'(x) \quad
  \text{($\delta$: Dirac's delta)}
\]
do not make sense within the framework of Schwartz
distributions.

A viable way to resolve the problem of multiplication will
be to construct a differential algebra which includes the
space of distributions $\D'(U)$ as a linear subspace.  At
first glance, this does not look feasible considering the
Schwartz impossibility result stating that no differential
algebra containing the space of distributions preserves the
product of continuous functions.
Nevertheless, if we only wish to preserve the product of
smooth functions then the construction of such an algebra is
possible, the best known example of which is the algebra of
generalized functions introduced by J.~F.~Colombeau.
However, Colombeau's algebra does not solve the second
problem, for it is not closed under composition.

Unlike the case of multiplicativity, composability of
distributions has not attracted much attention.  But
recently, \citep{Giordano} created a solution to the problem
of composability by introducing a category which has as its
morphisms smooth set-theoretic maps on (multidimensional)
points of a ring of scalars having infinitesimals and
infinities, and includes Schwartz distributions.
The aim of this article is to present an yet another
approach to constructing a category of generalized maps
containing Schwartz distributions.
Giordano's approach is based on Colombeau's algebra of
generalized functions and employs as the scalar a partially
ordered ring.  On the other hand, we use Robinson's field of
nonstandard numbers as the scalar field.  While this brings
us simplicity in the construction of the theory, it
complicates the relation between Colombeau's algebra and
ours.  Still, we can construct a chain of homomorphisms
relating Colombeau's algebra to our algebra of
nonstandard-valued smooth functions.

The paper is organized as follows.  In Section 2 we first
recall the basic properties of the category $\Diff$ of
diffeological spaces and its relation with the category of
topological spaces given by the adjunction
$L \dashv R \colon \Diff \rightleftarrows \Top$.
Then, we introduce two variants of the nonstandard number
field: one is the field of hyperreals $\nR$ (and its
complexification $\nC$), and the other is the field of
{quasi-asymptotic} real numbers $\rR$ (and its
complexification $\rC$).
Like the field of asymptotic numbers, $\rF$
($\F = \R,\, \C$) can be written as a quotient
$\rM(\nF)/\rN(\nF)$ of subalgebras
$\rN(\nF) \subset \rM(\nF) \subset \nF$.  But $\rF$ is
larger than the field of asymptotic numbers in the sense
that the latter is a subquotient of the former.
It turns out that for both $\bullet = *$ and $\rho$, $\bR$
is a non-Archimedean real closed field and $\bC$ is an
algebraically closed field of the form
$\bC = \bR + \sqrt{-1}\,\bR$.
% Moreover, via the ultrapower construction we can show that
% $\bF$ is a smooth field with respect to the diffeology
% derived from the standard diffeology of $\F$.  As a
% consequence, we see that $\bF$ is Hausdorff with respect
% to the $D$-topology and $\F$ is discrete in $\bF$.

In Section 3 we develop differential calculus on the
nonstandard Euclidean space $\bR^k$.  % ($k \geq 0$)
We introduce on $\bR^k$ a Hausdorff topology generated by
infinitesimal open neighborhoods.
Then the ultrapower construction enables us to define
partial derivatives of a function $f \colon U \to \bF$
defined on an open subset $U$ of $\bR^k$ and, consequently,
a differential algebra of infinitely differentiable
functions over $\bF$, which we denote $\bCinf(U,\bF)$
($\bullet = *,\, \rho$).
If $V$ is open in $\R^k$ then so is its natural extension
$\bas{V} \subset \bR^k$, and there is a natural inclusion of
differential algebras $\Cinf(V,\F) \to \bCinf(\bas{V},\bF)$.
Most of the basic properties of usual differentiable
functions are also shared by nonstandard ones; in
particular, the Intermediate Value Theorem and the Mean
Value Theorem hold for nonstandard differentiable functions.
Once the notion of infinite differentiability of
$\bF$-valued functions is established, it is easy to extend
it to that of maps between open subsets of nonstandard
Euclidean spaces, and we have a set of infinitely
differentiable maps $\bCinf(U,V)$ between open subsets
$U \subset \bR^k$ and $V \subset \bR^l$.  As in the
classical case, again, there is a well defined composition
\[
  \bCinf(V,W) \times \bCinf(U,V) \to \bCinf(U,W) \quad
  (U,\, V,\, W \in \bEucOp)
\]
and we obtain a concrete site $\bEucOp$ equipped with the
coverage consisting of open covers.  Moreover, there is a
faithful functor $\EucOp \to \bEucOp$ which takes an open
subset $U \subset \R^k$ to the corresponding internal open
subset $\bas{U} \subset \bR^k$ and $f \in \Cinf(U,V)$ to its
natural extension $\bas{\!f} \in \bCinf(\bas{U},\bas{V})$.

Based on the results of the preceding sections, we define
the category $\bDiff$ ($\bullet = *,\, \rho$) of nonstandard
diffeological spaces as the category of concrete sheaves on
$\bEucOp$.  We see in Section~4 that $\bDiff$ is closed
under small limits and colimits, is enriched over itself and
is cartesian closed with hom-objects as exponentials.
%
% Moreover, $\bDiff$ is related to $\Diff$ and $\Top$ via
% adjunctions
% $\tr \dashv \dg \colon \Diff \rightleftarrows \bDiff$ and
% $\bas{\!L} \dashv \bas{\!R} \colon \bDiff \rightleftarrows
% \Top$.  In particular, together with the help of the natural
% transformation
% $L \circ \dg \to \bas{\!L} \colon \bDiff \to \Top$ we can
% show that the natural map
% $\Cinf(X,Y) \to \bCinf(\bas{\!X},\bas{Y})$ is smooth, hence
% continuous with respect to the topologies given by $L$ on
% the source and $\bas{\!L}$ on the target.
Moreover, there are left adjoint functors
$\tr \colon \Diff \to \bDiff$ extending the inclusion
$\EucOp \to \bEucOp$ and $\bas{\!L} \colon \bDiff \to \Top$
assigning the underlying topology.  By deploying these
adjunctions, we can show that the map
$\Cinf(X,Y) \to \bCinf(\bas{\!X},\bas{Y})$ which takes
$f \colon X \to Y$ to
$\tr(f) = \bas{\!f} \colon \bas{\!X} \to \bas{Y}$ is smooth,
and hence continuous with respect to the topologies given by
$L$ on the source and $\bas{\!L}$ on the target.

In Section~5 we construct a continuous linear embedding
\[
  \I_U \colon \D'(U) \to \rCinf(\ras{U},\rF) \quad (U \in
  \EucOp)
\]
which extends the inclusion
$\Cinf(U,\F) \to \rCinf(\ras{U},\rF)$.  The construction of
$\I_U$ is analogous to the construction of the embedding
$\D'(U) \to \Gs(U)$ into Colombeau's special algebra
described in \citep{GKOS}: We first construct an embedding
of the space of compactly supported distributions and expand
it over $\D'(U)$ by employing sheaf-theoretic argument.
The mutual relation between $\rCinf(\ras{U},\rF)$ and
$\Gs(U)$ can be clarified through their relations to the
algebra $\ras{\mathbb{E}(U)}$ of asymptotic functions
introduced by \citep{Oberguggenberger-Todorov}.
We show in Section~6 that $\ras{\mathbb{E}}(U)$ is a
subquotient of $\rCinf(\ras{U},\rF)$ and there is a
homomorphism of differential algebras
$\Gs(U) \to \ras{\mathbb{E}}(U)$ that respects the
embeddings of $\D'(U)$.
Note however that, $\rCinf$ machinery has an advantage over
$\Gs$ and $\ras{\mathbb{E}}$ ones in that it enables the
composition of (vector-valued) distributions as the
composition of morphisms in a category.
Neither $\Gs$ nor $\ras{\mathbb{E}}$ can be extended to a
composable system.
% In order to attain composability,we need
% quasi-asymptoticity instead of asymptoticity.

Finally, in Section~7 we present several examples exhibiting
that the use of extended morphism provides a flexible
environment to study homotopy theory of diffeological
spaces.
More specifically, we consider those set maps $X \to Y$ that
extends to a morphism $\ras{\!X} \to \ras{Y}$ in $\rDiff$.
Such maps are called {quasi-asymptotic maps} from $X$ to
$Y$.  Smooth maps are evidently quasi-asymptotic and, more
generally, so are piecewise smooth maps.  By extending
smooth maps to quasi-asymptotic ones we can establish e.g.\
strict concatenation of paths in a space and homotopy
extension property for smooth relative cell complexes.

% \subsection*{Acknowledgment}
The author wishes to thank Paolo Giordano for his interest
and valuable suggestions during the preparation of the
article.  He also thanks Dan Christensen and Katsuhiko
Kuribayashi for helpful comments.

%%%%%%%%%%%%%%%%%%%%%%%%%%%%%%%%%%%%%%%%%%%%%%%%%%%%%%%%%%%%
% Section 2
%%%%%%%%%%%%%%%%%%%%%%%%%%%%%%%%%%%%%%%%%%%%%%%%%%%%%%%%%%%%
\section{Smooth fields of nonstandard numbers}
\subsection{The category of diffeological spaces}
We briefly recall the basic definitions and properties of
diffeological spaces.  For details, see \citep{Zemmour}.

Denote by $\EucOp$ the site consisting of open sets in the
Euclidean spaces and smooth (i.e.\ infinitely
differentiable) maps between them endowed with the coverage
consisting of open covers.

% \medskip
\begin{definition}
  A {diffeological space} is a concrete sheaf on $\EucOp$.
  A {smooth map} between diffeological spaces is a morphism
  between the corresponding sheaves.  The category
  consisting of diffeological spaces and smooth maps is
  denoted by $\Diff$.
\end{definition}
% \medskip

% Observe that any diffeological space $X$ is a subsheaf of
% the sheaf $U \mapsto \hom_{\Set}(U,\abs{X})$ and contains as
% its subsheaf the locally constant sheaf
% $U \mapsto \{ \sigma \in \hom_{\Set}(U,\abs{X}) \mid
% \text{$\sigma$ is locally constant} \}$.

Given a diffeological space $X$, let us write
$\abs{X} = X(\R^0)$ and call it the {underlying set} of $X$.
Then each section $\sigma \in X(U)$ determines and is
determined by a set map $U \to \abs{X}$ which takes
$u \in U$ to the image of $\sigma$ under the map
$X(U) \to X(\R^0) = \abs{X}$ induced by $\R^0 \to U$,
$0 \mapsto u$.
Thus we arrive at an alternative (in fact, more familiar)
definition of a diffeological space as a pair
$(X,\mathcal{D})$ consisting of a set $X$ and the set of its
plots
$\mathcal{D} \subset \coprod_{U \in \EucOp}
\hom_{\Set}(U,X)$ subject to the conditions:
\begin{enumerate}[topsep=4pt]
\item % (Covering)
  Every constant map $\mathbb{R}^n \to X$ belongs to
  $\mathcal{D}$.
\item % (Locality)
  A map $\sigma \colon U \to X$ is in $\mathcal{D}$ if and
  only if it is locally so.
\item % (Smooth Compatibility)
  If $\sigma \colon U \to X$ belongs to $\mathcal{D}$ then
  so does $\sigma \circ \phi \colon V \to X$ for any smooth
  map $\phi \colon V \to U$.
\end{enumerate}
In this context, a smooth map from $(X,\mathcal{D})$ to
$(Y,\mathcal{D}')$ can be defined as a set map
$f \colon X \to Y$ such that
$f \circ \sigma \in \mathcal{D}'$ holds for every
$\sigma \in \mathcal{D}$.
From now on, we identify each section $\sigma \in X(U)$ with
the corresponding plot which we simply denote
$\sigma \colon U \to X$ instead of
$\sigma \colon U \to \abs{X}$ unless distinction is
necessary.
Observe that every plot $\sigma \colon U \to X$ can be
written as a composition
\begin{equation}
  \label{eq:underlying_set_as_quotient}
  U = U \times \{\sigma\}
  % \xrightarrow{\subset} U \times X(U)
  \xrightarrow{\subset} \underset{U \in
    \EucOp}{\tcoprod} U \times X(U) \xrightarrow{\pr}
  \underset{U \in \EucOp}{\tcoprod} U \times X(U)/\!\sim \ \
  \cong \ \abs{X}
\end{equation}
where $\pr$ is the projection with respect to the
equivalence relation generated by
$(u,\tau \circ \phi) \sim (\phi(u),\tau)$ for $u \in U$,
$\tau \in X(V)$ and $\phi \in \Cinf(U,V)$, and the last
bijection identifies the class of
$(u,\sigma) \in U \times X(U)$ with $\sigma(u) \in \abs{X}$.

The following constructions play crucial role in later
discussions.

\paragraph{Subspace}
A smooth injection $f \colon X \to Y$ is called an induction
if $X$ is the pullback of $Y$ by $f$, i.e.\
\[
  X(U) = f^*Y(U) := \{ \sigma \in \hom_{\Set}(U,X) \mid f
  \circ \sigma \in Y(U) \} \quad (U \in \EucOp).
\]
% A subspace $A$ of $X$ is a subset of $\abs{X}$ equipped with
% the diffeology such that the inclusion $A \to X$ is an
% induction.
A (diffeological) subspace of $X \in \Diff$ is an arbitrary
subset $A$ of the underlying set of $X$ equipped with the
diffeology such that the inclusion $A \to X$ is an
induction.

\paragraph{Quotient space}
A smooth surjection $f \colon X \to Y$ is called a
subduction if for any $U \in \EucOp$, $Y(U)$ is isomorphic
to the pushforward $f_*X(U)$ of $X(U)$ by $f$, that is,
\[
  f_*X(U) = \{ \sigma \in \hom_{\Set}(U,Y) \mid \forall u
  \in U,\ \exists \tau \in X(V)\ (u \in V \subset U),\
  \sigma|V = f \circ \tau \}.
\]
If $\mathcal{R}$ is an equivalence relation on the
underlying set of $X$ then the quotient space
$X/\mathcal{R}$ is the set of equivalence classes
$\abs{X}/\mathcal{R}$ equipped with the diffeology such that
the projection $X \to X/\mathcal{R}$ is a subduction.

\paragraph{Product}
Given a family of diffeological spaces $\{X_j\}_{j \in J}$
their product $\prod_{j \in J} X_j$ is the sheaf
\[
  U \mapsto \tprod_{j \in J} X_j(U) \quad (U \in \EucOp).
\]
For each $k \in J$ the $k$-th projection
$\prod_{j \in J} X_j \to X_k$ is the sheaf morphism
$\prod_{j \in J} X_j(U) \to X_k(U)$.

\paragraph{Coproduct}
The coproduct $\coprod_{j \in J}X_j$ is defined as the
sheafification of the presheaf
\[
  U \mapsto \tcoprod_{j \in J} X_j(U) \quad (U \in
  \EucOp).
\]
More explicitly, a set map
$\sigma \colon U \to \coprod_{j \in J} X_j$ is a plot of
$\coprod_{j \in J}X_j$ if for every $x \in U$ there exist a
neighborhood $V \subset U$ and a plot $\tau \in X_j(V)$ for
some $j$ such that $\tau = \sigma|V$ holds.
% The inclusion $X_j \to \coprod_{j \in J}X_j$ is given by
% the maps $\sigma \mapsto i_j \circ \sigma$ for
% $\sigma \in X_j(U)$.

\paragraph{Hom-object}
Given $X,\, Y \in \Diff$, $\Cinf(X,Y)$ denotes the set of
smooth maps $X \to Y$ equipped with the coarsest diffeology
such that the evaluation map
$\ev \colon \Cinf(X,Y) \times X \to Y$ is smooth.
Explicitly, we have
\[
  \Cinf(X,Y)(U) := \{ \sigma \colon U \to \hom_{\Diff}(X,Y)
  \mid \forall \tau \in X(U),\ (u \mapsto \sigma(u)(\tau(u))
  \in Y(U) \}.
\]

\paragraph{$D$-Topology}
Given a diffeological space $X$, its {$D$-topology} is the
finest topology on $\abs{X}$ such that every plot of $X$ is
continuous; or equivalently, the quotient topology with
respect to the projection
$\coprod U \times X(U) \to \abs{X}$ (see
(\ref{eq:underlying_set_as_quotient})\,), where each
$U \in \EucOp$ is endowed with standard metric topology.
Clearly, any smooth map between diffeological spaces induces
a continuous map between underlying sets equipped with
$D$-topology.  Hence there is a functor
$L \colon \Diff \to \Top$ which assigns to any diffeological
space its underlying set equipped with $D$-topology.  Its
right adjoint $R \colon \Top \to \Diff$ is given by
\[
  RY(U) = \hom_{\Top}(U,Y) \quad (Y \in \Top,\ U \in \EucOp)
\]
that is, $RY$ has the same underlying set as $Y$ and all the
continuous maps $U \to Y$ as its plots.
The unit of the adjunction $\eta \colon X \to RLX$ is given
by the natural inclusion $X(U) \subset RLX(U)$ which
identifies each plot $U \to X$ of $X$ with the corresponding
continuous map $U \to LX$ regarded as a plot of $RLY$.
On the other hand, the counit $\varepsilon \colon LRY \to Y$
is given by the identity of the underlying set which is
continuous because the topology of $LRY$ is finer than the
original topology of $Y$.

% \medskip
\begin{definition}
  \label{dfn:standard_diffeology}
  The standard diffeology of a subset $X$ of $\R^k$ is
  defined by the formula
  \[
    X(U) = \{ f \colon U \to X \mid \text{the composition
      $U \xrightarrow{f} X \subset \R^k$ is smooth} \} \quad
    (U \in \EucOp).
  \]
  % which we call the standard diffeology of $X$.
\end{definition}
% \medskip

% The following is clear from the definition.
\begin{proposition}
  \label{prp:D-topology_is_metric_topology}
  If $X$ is an open subset of $\R^k$ then the $D$-topology
  associated with the standard diffeology of $X$ coincides
  with the subspace topology inherited from the standard
  metric topology on $\R^k$.  For general $X \subset \R^k$
  its $D$-topology is finer than the subspace topology.
\end{proposition}
% \medskip

\begin{example}
  % The $D$-topology of any interval in $\R$ is the subspace
  % topology inherited from the metric topology on $\R$, while
  % the $D$-topology of the set of rationals $\Q$ is not the
  % subspace topology but the discrete topology.
  The $D$-topology of the diffeological subspace of
  rationals $\Q \subset \R$ is the discrete topology, which
  is strictly finer than the subspace topology inherited
  from $\R$.
\end{example}
% \medskip

We summarize the basic features of $\Diff$ in the following.
% \medskip
\begin{theorem}
  \label{thm:category_of_diffeological_spaces}
  The category $\Diff$ enjoys the following properties:
  \begin{enumerate}
  \item $\Diff$ is closed under small limits and colimits.
  \item $\Diff$ is enriched over itself with $\Cinf(X,Y)$ as
    hom-object.
  \item $\Diff$ is cartesian closed with
    $\Cinf(X,Y)$ as exponential object.
  % \item There is a left adjoint functor
  %   $L \colon \Diff \to \Top$ extending the inclusion
  %   $\EucOp \to \Top$.
  \item $\Diff$ is related with $\Top$ via the adjunction
    $L \dashv R \colon \Diff \rightleftarrows \Top$.
  \end{enumerate}
\end{theorem}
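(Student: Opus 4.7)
The plan is to verify the four claims by leaning on the explicit constructions --- subspaces, quotients, products, coproducts, and the functional diffeology --- already recorded in the excerpt, and by exploiting the fact that $\Diff$ is a category of concrete sheaves on a site.

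For (1), limits are handled by setting $(\lim_J F)(U) = \lim_J F(-)(U)$ in $\Set$; the sheaf condition and concreteness are inherited componentwise from the constituents, and the standard reduction to products and equalizers --- realized by the product and subspace constructions of the excerpt --- yields all small limits. For colimits I would work directly with underlying sets and the plot description: coproducts are given by the disjoint-union formula already recorded, and coequalizers by the quotient diffeology modulo the generated equivalence relation. One then assembles arbitrary small colimits from coproducts and coequalizers, and verifies the universal property in $\Diff$ directly, bypassing any sheafification step that could pass through non-concrete intermediate sheaves.

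For (2) and (3), I would first verify that the functional diffeology satisfies the three defining axioms of a diffeological space --- constants, locality, and closure under smooth pullback along $\phi \colon V \to U$ --- and that composition and evaluation $\ev$ are smooth (smoothness of $\ev$ is essentially built into the definition). Cartesian closure then reduces to the natural bijection
\[
  \Cinf(X \times Y, Z) \;\cong\; \Cinf\bigl(X,\,\Cinf(Y,Z)\bigr),
  \qquad f \longmapsto \bigl(x \mapsto f(x,-)\bigr),
\]
which I would check by unwinding definitions: each side is characterized by the condition that $u \mapsto f(\sigma(u), \tau(u))$ is a plot of $Z$ whenever $\sigma \in X(U)$ and $\tau \in Y(U)$ share the same domain $U$. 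The bijection is immediate on underlying sets, and the symmetry of the characterizing condition shows both directions are smooth. The self-enrichment in (2) is then a special case, obtained by applying the adjunction with $X = \R^0$ to identify smooth maps with plots of $\Cinf(X,Y)$ over a point.

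For (4), functoriality of $L$ and $R$ follows since smooth maps are $D$-continuous and precomposition preserves continuity; the unit $\eta_X$ is well defined because every plot of $X$ is $D$-continuous by the very definition of initial topology, and the counit $\epsilon_Y$ is continuous because the $D$-topology on $LRY$ is, by construction, coarser than the topology of $Y$. The triangle identities then reduce to set-theoretic tautologies on underlying sets. The main obstacle I anticipate is the colimit portion of (1): concrete sheaves form a reflective subcategory of all sheaves whose reflector can behave subtly on non-concrete intermediate objects, so the cleanest path is to describe the colimit directly as a quotient of a coproduct equipped with the plot structure generated by the images of plots of the diagram, and to verify its universal property in $\Diff$ by hand rather than through sheafification.
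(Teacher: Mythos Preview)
The paper does not actually prove this theorem: it is stated as a summary of well-known facts about $\Diff$, with the relevant constructions (subspaces, quotients, products, coproducts, the functional diffeology, and the functors $L$ and $R$) recorded beforehand and the reader referred to Iglesias-Zemmour's book for details. Your proposal is therefore not competing with a proof in the paper but rather supplying one, and the outline you give is the standard argument and is essentially correct.

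One small slip: in part~(4) you write that the $D$-topology on $LRY$ is ``coarser than the topology of $Y$.'' It is the other way around. Since every plot of $RY$ is by definition a continuous map into $Y$, every open set of $Y$ is $D$-open, so the $D$-topology on $LRY$ is \emph{finer} than (or equal to) the original topology of $Y$. This is exactly what makes the identity map $\epsilon_Y \colon LRY \to Y$ continuous, so your conclusion is right but the stated reason has the inequality reversed.
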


%%%%%%%%%%%%%%%%%%%%
\subsection{Nonstandard number fields via ultrapower
  construction}
Let $\U$ be a free ultrafilter on the set of nonnegative
integers $\N$ that do not contain any finite subset.
Given a predicate $P(n)$ defined on $\N$, let us write
``$P(n)$ a.e.'' to mean $\{n \in \N \mid P(n)\} \in \U$.
Then there is a total preorder ``$\leq$'' on the algebra
$\R^{\N}$ defined by
\[
  (a_n) \leq (b_n)\ \ \text{if and only if}\ \ a_n \leq b_n\
  \aev
\]
and we define an ordered set $\nR$ to be the quotient of
$\R^{\N}$ by the equivalence relation:
\[
  (a_n) \sim (b_n) \ \ \text{if and only if} \ \ (a_n) \leq
  (b_n) \ \& \ (b_n) \leq (a_n).
\]
Equivalently, we can define $\nR = \R^{\N}/\nN(\R^{\N})$
where % $\nI(\R^{\N})$ is the maximal ideal
\[
  \nN(\R^{\N}) = \{(x_n) \in \R^{\N} \mid
  x_n = 0 \ \aev\}.
\]
Similarly, we define $\nC = \C^{\N}/\nN(\C^{\N})$ where
\[
  \nN(\C^{\N}) = \{(x_n) \in \C^{\N} \mid x_n = 0 \
  \aev\}.
\]
Members of $\nF$ (resp.\ $\nC$) are called nonstandard real
(resp.\ complex) numbers.  Nonstandard real numbers are also
called hyperreal numbers following A.~Robinson.
It is well known that with respect to the addition and
multiplication induced by the level-wise operations
% $\R^{\N} \times \R^{\N} \to \R^{\N}$,
$((a_n), (b_n)) \mapsto (a_n + b_n)$ or $(a_nb_n)$, $\nR$ is
a non-Archimedean real closed extension of $\R$ and $\nC$ is
an algebraically closed field of the form
$\nC = \nR + \sqrt{-1}\nR$.
% There exist ``absolute value'' functions
% $\nF \to \nR_{\geq 0}$, $x \mapsto \abs{x}$
% ($\F = \R,\, \C$).

%%%%%%%%%%
% \subsection{Quasi-asymptotic numbers}
Now, let $\rho \in \nR$ be a positive infinitesimal
represented by the net $(1/n) \in \R^{\N}$ and denote by
$\nst{\N}$ the set of {hypernatural} numbers, i.e.\ the
image of $\N^{\N}$ under the projection $\R^{\N} \to \nR$.
For $\F = \R,\,\C$ we define $\rF = \rM(\nF)/\rN(\nF)$,
where
\begin{align*}
  \rM(\nF)
  &= \{ x \in \nF \mid \exists c \in \nst{\N},\ \abs{x} \leq
    \rho^{-c}\}, \\
  \rN(\nF)
  &= \{ x \in \nF \mid \forall d \in \nst{\N},\ \abs{x} \leq
    \rho^{d} \}.
\end{align*}
Alternatively, we can define
$\rF = \rM(\F^{\N})/\rN(\F^{\N})$, where
\begin{align*}
  \rM(\F^{\N})
  &= \{ (x_n) \in \F^{\N} \mid \exists (c_n) \in
    \N^{\N},\ \abs{x_n} \leq
    n^{c_n} \ \aev \}, \\
  \rN(\F^{\N})
  &= \{(x_n) \in \F^{\N} \mid \forall (d_n) \in
    \N^{\N},\ \abs{x_n} \leq
    1/n^{d_n} \ \aev \}.
\end{align*}
Members of $\rF$ are called {quasi-asymptotic}
({real} or {complex}) numbers.

% \medskip
\begin{proposition}
  \label{prp:bF_is_closed_field}
  Let $\bullet$ be $*$ or $\rho$.  Then we have the
  following.
  \begin{enumerate}
  \item $\bR$ is a real closed, Cantor complete and
    non-Archimedean extension of\/ $\R$.
  \item $\bC$ is an algebraically closed field of the form\/
    $\bC = \bR + \sqrt{-1}\,\bR$.
  \end{enumerate}
\end{proposition}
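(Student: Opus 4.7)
The plan is to handle the cases $\bullet = *$ and $\bullet = \rho$ in parallel, since the overall shape of the argument is the same although the justifications differ. For $\bullet = *$ the results are classical nonstandard analysis. Since $\nF = \F^{\N}/\U$ is a countable ultrapower, the transfer principle (the fundamental theorem of ultraproducts) shows that $\nR$ and $\nC$ satisfy every first-order sentence true in $\R$ and $\C$ respectively; in particular $\nR$ is a real closed ordered field extension of $\R$ and $\nC$ is algebraically closed. Non-Archimedeanness is witnessed by the positive infinitesimal $\rho = [(1/n)]$, and Cantor completeness follows from the countable saturation enjoyed by countable ultrapowers: every descending chain of nonempty internal closed intervals has nonempty intersection. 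The decomposition $\nC = \nR + \sqrt{-1}\,\nR$ is immediate from $\C^{\N} = \R^{\N} \oplus \sqrt{-1}\,\R^{\N}$ at the level of representatives.

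For $\bullet = \rho$ the work is largely a verification of ring-theoretic structure. Using $2 \leq \rho^{-1}$, I would check that $\rM(\nF)$ is a subring of $\nF$ (the bounds combine as $|x+y| \leq \rho^{-\max(c,d)-1}$ and $|xy| \leq \rho^{-c-d}$) and that $\rN(\nF)$ is an ideal of $\rM(\nF)$. Moreover $\rN$ is maximal: if $x \in \rM \setminus \rN$ then $|x| > \rho^{d_0}$ for some hypernatural $d_0$, hence $|x^{-1}| < \rho^{-d_0}$ and $x^{-1} \in \rM$. Thus $\rF = \rM(\nF)/\rN(\nF)$ is a field, inheriting its order from $\nR$, with non-Archimedeanness again witnessed by the class of $\rho$. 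Real closedness of $\rR$ reduces to that of $\nR$: a polynomial whose coefficients are representatives in $\rM(\nR)$ has its roots in $\nR$ controlled by Cauchy's root bound in terms of the coefficients, hence these roots lie in $\rM(\nR)$; projecting to the quotient produces roots in $\rR$. The decomposition $\rC = \rR + \sqrt{-1}\,\rR$ follows by splitting representatives into real and imaginary parts, yielding $\rM(\nC) = \rM(\nR) + \sqrt{-1}\,\rM(\nR)$ together with the analogous equality for $\rN$; algebraic closure of $\rC$ is then the standard consequence of adjoining $\sqrt{-1}$ to the real closed field $\rR$.

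The main obstacle I anticipate is Cantor completeness of $\rR$, because a naive lifting of a nested chain $[a_n, b_n] \subset \rR$ to representatives $[\hat a_n, \hat b_n] \subset \nR$ need not remain nested in $\nR$ (the nesting only holds modulo $\rN$). I would repair this by replacing $\hat a_n,\hat b_n$ with the cumulative extrema $\max_{k \leq n} \hat a_k$ and $\min_{k \leq n} \hat b_k$ computed inside the linearly ordered $\nR$. These are legitimate finite operations, they represent the original classes $a_n, b_n$ in $\rR$ (the discrepancies are finite suprema of elements of $\rN$ and hence still belong to $\rN$), and they remain in $\rM(\nR)$ by being sandwiched between $\hat a_1$ and $\hat b_1$. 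Countable saturation of $\nR$ then supplies $\hat c \in \bigcap_n [\hat a_n, \hat b_n]$, and the same sandwich forces $\hat c \in \rM(\nR)$; its class lies in every $[a_n, b_n]$, completing Cantor completeness.
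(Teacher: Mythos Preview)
Your plan is sound and in fact more complete than the paper's own proof, which dismisses $\bullet = *$ as well known and, for $\bullet = \rho$, argues only that $\rC$ is an algebraically closed field with $\rC = \rR + \sqrt{-1}\,\rR$ and that $\rR$ is non-Archimedean; Cantor completeness is stated but never addressed there. Your treatment of Cantor completeness via cumulative extrema of lifted endpoints together with countable saturation of $\nR$ is the right idea. One detail to make explicit: after passing to $\tilde a_n = \max_{k\le n}\hat a_k$ and $\tilde b_n = \min_{k\le n}\hat b_k$ you still need $\tilde a_n \le \tilde b_n$ in $\nR$ to invoke saturation, and this can fail by an element of $\rN$. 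When it does, that forces $a_n = b_n$ in $\rR$, so this common value already lies in every interval and you are done; otherwise the saturated intersection argument goes through as you describe.

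The substantive strategic difference is the order of the closure arguments. The paper works directly over $\C$: given a monic $P(x)=x^p+a_1x^{p-1}+\cdots+a_p$ over $\rC$, it lifts the coefficients to nets $(a_{k,n})\in\rM(\C^{\N})$, chooses a root $x_n\in\C$ of each $P_n$, and uses the elementary bound $|x_n|\le 1+|a_{1,n}|+\cdots+|a_{p,n}|$ to place $(x_n)$ in $\rM(\C^{\N})$; real closedness of $\rR$ then drops out because $\rR[\sqrt{-1}]=\rC$ is algebraically closed. Your route through $\nR$ is legitimate, but the phrase ``has its roots in $\nR$'' is not literally true for an arbitrary lifted polynomial (think of $x^2+1$), so you must argue separately that odd-degree lifts have a root in $\nR$ and that positive moderate elements have moderate square roots, with Cauchy's bound controlling moderateness in each case. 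The paper's detour through $\rC$ buys precisely the uniformity that lets a single root-existence-plus-bound argument handle everything at once.
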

% Here, ``Cantor complete'' means that every nested sequence
% of bounded closed intervals has a non-empty intersection.
% A brief proof of the proposition above will be given in
% the appendix.

\begin{proof}
  We only consider the case $\bullet = \rho$ as the case
  $\bullet = *$ is well known.  It is clear that $\rC$ is a
  ring and that we have $\rC = \rR + \sqrt{-1}\,\rR$.
  To see that $\rC$ is a field, suppose
  $(a_{n}) \in \rM(\C^{\N})$ represents a non-zero class in
  $\rC$.  Then there exist $(c_n),\, (d_n) \in \N^{\N}$ such
  that
  \[
    \Phi = \{ n \mid 1/n^{d_n} \leq \abs{a_n} \leq 1/n^{c_n}
    \} \in \U.
  \]
  %  (cf.\ Example~\ref{ex:infinite_product})
  Let us define $(b_n) \in \C^{\N}$ by $b_n = 1/a_n$ if
  $n \in \Phi$ and $b_n = 1$ if otherwise.  Then
  $(b_n) \in \rM(\C^{\N})$ because $\abs{b_n} \leq n^{d_n}$
  a.e., and we have $[b_n] = [a_n]^{-1}$ in $\rC$, implying
  $\rC$ is a field.
  To see that $\rC$ is algebraically closed, let
  \[
    P(x) = x^p + a_1x^{p-1} + \cdots + a_p \in \rC[x]
  \]
  % be a polynomial with coefficients in $\rC$,
  and choose representatives $(a_{k,n}) \in \rM(\C^{\N})$
  for $a_k \in \rC$.  For each $n \in \N$ let
  \[
    P_n(x) = x^p + a_{1,n}x^{p-1} + \cdots +
    a_{p,n}
  \]
  and take $x_n \in \C$ such that $P_n(x_n) = 0$.  Then
  $(x_n) \in \rM(\C^{\N})$ because we have
  $\abs{x_n} \leq 1 + \abs{a_{1,n}} + \cdots +
  \abs{a_{p,n}}$, and hence determines an element
  $[x_n] \in \rC$ that satisfies $P([x_n]) = 0$.  Thus $\rC$
  is algebraically closed, and consequently, its real part
  $\rR$ is a real closed field.
  % and is totally ordered in such a way that $a \geq 0$ if
  % and only if $a = b^2$ holds for some $b \in \rR$.
  Moreover, $\rR$ is non-Archimedean because it contains a
  non-zero infinitesimal $\rho = [1/n]$.
\end{proof}

\begin{remark}
  In many publications, the symbol $\rF$ denotes the field
  of {asymptotic numbers} defined as a quotient
  $\aM(\F^{\N})/\aM(\F^{\N})$, where
  % ($\F = \R,\, \C$)
  \begin{align*}
    \aM(\F^{\N})
    &= \{ (x_n) \in \F^{\N} \mid \exists c \in \N,\ \abs{x_n}
      \leq n^{c} \ \aev \}, \\
    \aN(\F^{\N})
    &= \{(x_n) \in \F^{\N} \mid \forall d \in \N,\ \abs{x_n}
      \leq 1/n^{d} \ \aev \}.
  \end{align*}
  In this paper, we denote $\aF = \aM(\F^{\N})/\aM(\F^{\N})$
  to avoid the conflict with our usage of $\rF$.  Again,
  $\aR$ is a non-Archimedean real closed field and $\aC$ is
  an algebraically closed field of the form
  $\aC = \aR + \sqrt{-1}\,\aR$.  We may regard $\aF$ as a
  subquotient of $\rF$ because we have
  $\rN(\F^{\N}) \subset \aN(\F^{\N}) \subset \aM(\F^{\N})
  \subset \rM(\F^{\N})$.
\end{remark}

%%%%%%%%%%%%%%%%%%%%%%%%%%%%%%%%%%%%%%%%%%%%%%%%%%%%%%%%%%%%
% Section 3
%%%%%%%%%%%%%%%%%%%%%%%%%%%%%%%%%%%%%%%%%%%%%%%%%%%%%%%%%%%%
\section{Differential calculus on nonstandard Euclidean
  spaces}
%%%%%%%%%%%%%%%%%%%%
\subsection{Internal topology of nonstandard Euclidean
  spaces}
Let $k \in \N$ and $\bullet$ be either $*$ or $\rho$.  By a
nonstandard Euclidean space of dimension $k$ we mean the
$k$-dimensional vector space $\bR^k$ over $\bR$ equipped
with a topology defined as in the following way.
For $k = 1$ and $\bullet = *$, we endow $\R^{\N}$ with the
box topology and consider $\nR$ as a quotient of $\R^{\N}$
by the projection $\R^{\N} \to \nR$.  %
Similarly, $\rR$ is defined as the quotient of the
projection $\rM(\R^{\N}) \to \rR$ where $\rM(\R^{\N})$ is
regarded as a topological subspace of $\R^{\N}$.  For
general $k$, $\bR^k$ is regarded as the $k$-fold product of
the topological field $\bR$.

To be more explicit, denote by $\mathcal{B}(\bR^k)$ the set
of internal subsets
\[ [V_n] = \{\, [x_n] \in \bR^k \mid (x_n) \in \tprod_{n \in
    \N} V_n \cap \bM(\R^{\N})^k \,\} \subset \bR^k
\]
where $V_n$ is open in $\R^k$ for every $n \in \N$ and we
put $\nM(\R^{\N}) = \R^{\N}$ in the case $\bullet = *$.
Then, By the definition of quotient topology, we have the
following.

% \medskip
\begin{lemma}
  % Let $k \geq 0$ and $\bullet = *$ or $\rho$.  Then
  $\mathcal{B}(\bR^k)$ is a basis for the topology of\/
  $\bR^k$, that is, $U$ is open in $\bR^k$ if and only if it
  is the union of members of $\mathcal{B}(\bR^k)$.
\end{lemma}
% \medskip

% \begin{proof}
%   As $L \colon \Diff \to \Top$ preserves colimits,
%   $L(\bR^k)$ has the quotient topology with respect to the
%   projection $L(\bM(\R^{\N})^k) \to L(\bR^k)$.  Thus $U$
%   is $D$-open if and only if so is its preimage
%   $U' \subset \bM(\R^k)$.
%   %
%   But as the topology of $(\R^{\N})^k = (\R^k)^{\N}$ is
%   the box topology,
%   %
%   $U'$ is open if and only if every its point is contained
%   in a neighborhood of the form
%   $\prod_{n \in \N} V_n \cap \bM(\R^{\N})^k$ with $V_n$ open
%   in $\R^k$.  This implies the statement for $U$.
% \end{proof}

It follows that the topology of $\bR^k$ is generated by
infinitesimal open sets, i.e.\ those open sets
$V \subset \bR^k$ such that $x \approx y$ for any
$x,\, y \in V$.

% \medskip
\begin{corollary}
  \label{crl:nonstandard_euclidean_space_is_hausdorff}
  The space $\bR^k$ is a Hausdorff topological vector space
  over $\bR$ containing\/ $\R^k$ as a discrete subset.
\end{corollary}
% \medskip

In particular, we can equip $\bC$ with a topology induced by
the bijection $\bC = \bR + \sqrt{-1}\bR \cong \bR^2$.

% \medskip
\begin{proposition}
  \label{prp:bF_is_a_topological_field}
  With respect to the topology defined above, $\bF$ is a
  topological field for $\F = \R,\, \C$ and
  $\bullet = *,\, \rho$.
\end{proposition}
% \medskip

\begin{definition}
  \label{dfn:dot_U}
  For any subset $M \subset \R^k$ denote by $\bas{\!M}$ the
  internal subset of $\bR^k$ given by the constant net
  $(M) \subset (\R^k)^{\N}$.  In particular, if $U$ is open
  in $\R^k$ then so is $\bas{U}$ in $\bR^k$.
\end{definition}

%%%%%%%%%%%%%%%%%%%%
\subsection{Differentiable functions on nonstandard
  Euclidean spaces}
\subsubsection{The case $\bullet = *$}
Let $U$ be an open subset of $\nR^k$ and let
$0 \leq r \leq \infty$.

% \medskip
\begin{definition}
  \label{dfn:nonstandard_differentiability}
  A function $f \colon U \to \nF$ is called a differentiable
  function of class $C^r$, or $C^r$-function for short, if
  for any $x \in U$ there exist an internal neighborhood
  $V = [V_n] \subset U$ of $x$ and a net of functions
  $(g_n \colon V_n \to \F)$ such that
  \begin{enumerate}[topsep=4pt]
  \item[(i)] each $g_n$ is of class $C^r$, and
  \item[(ii)] $f(y) = [g_n(y_n)]$ holds for every
    $y = [y_n] \in V$ with $y_n \in V_n$.
  \end{enumerate}
\end{definition}
% \medskip

Denote by $\nCinf[r](U,\nF)$ the set of all $C^r$-functions
$U \to \nF$.  Clearly, $\nCinf[r](U,\nF)$ is a commutative
algebra over $\nF$ under pointwise addition and
multiplication.  Moreover, there are $\nF$-linear operators
\[
  \partial_i \colon \nCinf[r+1](U,\nF) \to \nCinf[r](U,\nF)
  \quad (1 \leq i \leq k,\ r \geq 0)
\]
defined as in the following way:
Let $f \in \nCinf[r+1](U,\nF)$ and $x \in U$.  Then there
exist an internal open neighborhood $V = [V_n] \subset U$
and a net of $C^r$-functions $(g_n \colon V_n \to \F)$
satisfying (i) and (ii) of
Definition~\ref{dfn:nonstandard_differentiability} and we
can put
\begin{equation}
  \label{eq:derivative_of_hyperfunction}
  \partial_i{f(x)} = [\partial_i{g_n}(x_n)] \in \nF \quad (1
  \leq i \leq k)
\end{equation}
where $(x_n) \in \prod_{n \in \N} V_n$ is a net representing
$x$.  To see that the value of $\partial_i{f}(x)$ does not
depend on the choice of representatives $(g_n)$ and $(x_n)$,
suppose $(h_n \colon V'_n \to \F)$ is another net of smooth
functions such that $x \in V' = [V'_n]$ and
$f(y) = [h_n(y_n)]$ holds for $y = [y_n] \in V'$.  Let
$W_n = V_n \cap V'_n$ and put
$k_n = h_n - g_n \colon V_n \to \F \ (n \in \N)$ for
$n \in \N$.  Then the function $k \colon W = [W_n] \to \nF$
defined by $k([y_n]) = [k_n(y_n)]$ is constant with value
$0$.  But this implies
$[\partial_i g_n(x_n)] = [\partial_i h_n(x_n)]$ because of
the following.

% \medskip
\begin{lemma}
  If $k \colon W \to \nF$ is constant then we have
  $[\partial_i k_n(y_n)] = 0$ for all $[y_n] \in W$ and
  $1 \leq i \leq k$.
\end{lemma}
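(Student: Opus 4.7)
The plan is to reduce to the case where the constant value is zero, and then use the axiom of choice combined with the ultrafilter property of $\U$ to show that, for $n$ in a set lying in $\U$, the representing smooth function $k_n$ is itself identically zero on $V_n$.

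First I would write the constant value of $k$ as $c = [c_n] \in \nF$ for some $(c_n) \in \F^{\N}$. The map $V_n \to \F$ sending $y \mapsto k_n(y) - c_n$ is smooth and has the same partial derivatives as $k_n$, so without loss of generality we may replace $k_n$ by $k_n - c_n$ and assume $k \equiv 0$.

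Next, the identity $k([y_n]) = [k_n(y_n)]$ together with the hypothesis $k \equiv 0$ translates to: for every $(y_n) \in \prod_n V_n$, the set $\{n \in \N : k_n(y_n) = 0\}$ belongs to $\U$. The key step is to upgrade this pointwise statement to the uniform one that $\Psi := \{n \in \N : k_n \equiv 0 \text{ on } V_n\}$ belongs to $\U$. Suppose instead that $\Psi \notin \U$; then by the ultrafilter property its complement $\{n : \exists y \in V_n,\ k_n(y) \neq 0\}$ lies in $\U$. By the axiom of choice one can pick $y_n \in V_n$ with $k_n(y_n) \neq 0$ for each such $n$, and an arbitrary $y_n \in V_n$ otherwise, obtaining a sequence $(y_n) \in \prod_n V_n$ for which $\{n : k_n(y_n) \neq 0\} \in \U$, contradicting the hypothesis.

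Once $\Psi \in \U$ is established, each $k_n$ with $n \in \Psi$ satisfies $\partial_i k_n \equiv 0$ on $V_n$, so for any $(y_n) \in \prod_n V_n$ we have $\{n : \partial_i k_n(y_n) = 0\} \supseteq \Psi \in \U$, which yields $[\partial_i k_n(y_n)] = 0$ in $\nF$. The main obstacle is the choice-plus-ultrafilter step that promotes pointwise vanishing modulo $\U$ to uniform vanishing on the representing open sets; the reduction to the zero case and the passage to partial derivatives are otherwise routine.
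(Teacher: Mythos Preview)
Your proof is correct and follows a genuinely different route from the paper's argument. The paper proceeds by contradiction using the order on $\nR$: assuming $[\partial_i k_n(y_n)] > 0$ at some point, it notes that $\partial_i k_n(y_n) > 0$ a.e., and for each such $n$ picks a nearby $z_n \in V_n$ (moving slightly in the $i$-th coordinate direction) with $k_n(z_n) > k_n(y_n)$; this gives $k([z_n]) > k([y_n])$, contradicting constancy. The case $[\partial_i k_n(y_n)] < 0$ is handled symmetrically. In other words, the paper transfers the one-variable calculus fact ``nonzero derivative forces a change in value'' through the ultrapower.

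Your approach instead establishes the stronger intermediate statement that $k_n \equiv 0$ on $V_n$ for a.e.\ $n$, via a direct ultrafilter-plus-choice argument with no appeal to calculus or to the order. This buys you two things: the proof works uniformly for $\F = \C$ without splitting into real and imaginary parts, and it immediately yields vanishing of \emph{all} higher partial derivatives $D^{\alpha}k_n$, not just $\partial_i$. The paper's route, by contrast, stays closer to the analytic intuition and avoids the saturation-style step. One minor point: your construction of $(y_n) \in \prod_n V_n$ tacitly assumes each $V_n$ is nonempty; this is harmless here since $V$ arises as a neighbourhood of a given point (so $V_n \neq \emptyset$ a.e., and one may adjust the remaining $V_n$ without changing $[V_n]$), but it is worth a one-line remark.
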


\begin{proof}
  Suppose that $[\partial_{i}k_n(y_n)] > 0$ holds for some
  $[y_n] \in W$.  Then we have $\partial_{i}k_n(y_n) > 0$
  a.e.\ and there exists a net $(z_n)$ with $z_n \in W_n$
  such that $k_n(z_n) > k_n(y_n)$ holds a.e.  But this means
  $[k_n(z_n)] > [k_n(y_n)]$, contradicting to the assumption
  that $k$ is constant.
  Similar contradiction occurs if we suppose
  $[\partial_{i}k_n(y_n)] < 0$, hence
  $[\partial_{i}k_n(y_n)] = 0$ holds everywhere.
\end{proof}

In particular, we have the following.

% \medskip
\begin{proposition}
  \label{prp:DA_of_hyperreal_functions}
  $\nCinf(U,\bF)$ is a differential algebra with respect to
  partial derivatives $\partial_i$ ($1 \leq i \leq k$), i.e.\
  there hold
  \begin{enumerate}[topsep=4pt]
  \item[{\rm (i)}]
    $\partial_i(fg) = (\partial_{i}f)g + f(\partial_{i}g)$
    for $1 \leq i \leq k$, and
  \item[{\rm (ii)}]
    $\partial_{i}\partial_{j}f = \partial_{j}\partial_{i}f$
    for $1 \leq i < j \leq k$.
  \end{enumerate}
\end{proposition}
% \medskip

% For any multi-index $\alpha = (a_1,\cdots,a_k) \in \N^k$
% denote by $D^{\alpha}$ the composite operator
% $\partial_1^{a_1} \cdots \partial_k^{a_k} \colon A \to A$.
% Thanks to the property (ii) above, we see that
% $D^{\alpha}D^{\beta} = D^{\alpha + \beta}$ holds for any
% $\alpha,\, \beta \in \N^k$.

For the extension $\nst{U} = [U] \subset \nR^k$ of an open
subset $U \subset \R^k$ there is an injection
\[
  i_U \colon \Cinf(U,\F) \to \nCinf(\nst{U},\nF),\ \ f
  \mapsto \nst{\!f},
\]
induced by the diagonal inclusion
$\Cinf(U,\F) \to \Cinf(U^{\N},\F^{\N})$, i.e.\
$\nst{\!f}(x) = [f(x_n)]$ for $x = [x_n] \in \nst{U}$.  It
is clear by the definition that the following holds.

% \medskip
\begin{proposition}
  \label{prp:inclusion_into_hyperreal_functions}
  For any open subset $U \subset \R^k$ the map
  $i_U \colon \Cinf(U,\F) \to \nCinf(\nst{U},\nF)$ is an
  inclusion of differential algebras.
\end{proposition}
% \medskip

The next proposition implies that the intermediate value
theorem (IVT) and the mean value theorem (MVT) hold at least
locally for any member of $\nCinf(U,\nF)$.

% \medskip
\begin{proposition}
  \label{prp:IVT_and_MVT}
  Let $U$ be an internal open subset of the form
  $U = [U_n] \subset \nR^k$ and $f \in \nCinf(U,\nR)$.
  Suppose $f$ is represented by a net of smooth functions
  $(f_n \colon U_n \to \R)$, that is, $f(z) = [f_n(z_n)]$
  holds for any $z = [z_n] \in U$.  Then the following hold
  for any $x,\, y \in U$ such that $(1-t)x+ty \in U$ for all
  $t \in \nst{[0,1]}$.
  \begin{description}[topsep=4pt]
  \item[\rm IVT:] If $f(x) \neq f(y)$ then for any $r \in \nF$
    between $f(x)$ and $f(y)$ there exists $z \in U$ such
    that $f(z) = r$ holds.
  \item[\rm MVT:]There exists $c \in \nst{(0,1)}$ such that
    $f(y)-f(x) = \nabla f((1-c)x+cy) \cdot (y-x)$ holds.
  \end{description}
  Here we denote by $\nabla$ the gradient
  $(\partial_1,\cdots,\partial_k)$ and by $\cdot$ the dot
  product.
\end{proposition}

\begin{proof}
  To prove IVT, assume $f(x) < f(y)$ and choose
  representatives $(x_n),\, (y_n)$ and $(r_n)$ for $x$, $y$
  and $r$, respectively.  Then $f_n(x_n) < r_n < f_n(y_n)$
  holds for almost every $n$ and there exists
  $(c_n) \in (0,1)^{\N}$ that satisfies
  % $z_n = (1-c_n)x_n+c_ny_n$ satisfies
  $f_n((1-c_n)x_n+c_ny_n) = r_n$ a.e.  Thus, by letting
  $c = [c_n]$ and $z = (1-c)x+cy \in U$ we have
  $f(z) = [f_n(z_n)] = [r_n] = r$ as desired.
  MVT is proved similarly.  For each $n \in \N$ choose
  $c_n \in (0,1)$ that satisfies
  \[
    f_n(y_n)-f_n(x_n) = \nabla f_n((1-c_n)x_n+c_ny_n) \cdot
    (y_n-x_n).
  \]
  Then $f(y)-f(x) = \nabla f((1-c)x+cy) \cdot (y-x)$ holds
  for $c = [c_n] \in \nst{(0,1)}$.
\end{proof}

%%%%%%%%%%
\subsubsection{The case $\bullet = \rho$}
Given an open subset $U \subset \rR^k$ let
$U' = (q^k)^{-1}(U) \subset \rM(\nR)^k$, where $q^k$ is the
projection $\rM(\nR)^k \to \rR^k$.
Since $\rM(\nR)$ is the image of a linear subspace
$\rM(\R^{\N})$ of $\R^{\N}$, we can define partial
derivatives of a function $U' \to \nF$ by the formula
similar to \eqref{eq:derivative_of_hyperfunction}.  Thus we
have a set $\nCinf[r](U',\nF)$ of differentiable functions
$U' \to \nF$ of class $C^r$ ($1 \leq r \leq \infty$).  In
particular, $\nCinf(U',\nF)$ is a differential algebra over
$\nF$.
We now define for $\mathbf{L} = \mathbf{M},\, \mathbf{N}$ a
subalgebra $\rL(\nCinf(U',\nF)) \subset \nCinf(U',\nF)$ as
follows:
\[
  \rL(\nCinf(U',\nF)) := \{\, \phi \in \nCinf(U',\nF) \mid
  \forall \alpha \in \N^k,\ \forall y \in U',\
  D^{\alpha}\phi(y) \in \rL(\nF) \,\}.
\]
Here $D^{\alpha}$ denotes the composite operator
$\partial_1^{a_1} \cdots \partial_k^{a_k}$ for
$\alpha = (\alpha_1,\cdots,\alpha_k) \in \N^k$.
% Note that $D^{\alpha}D^{\beta} = D^{\alpha + \beta}$ holds
% for any $\alpha,\, \beta \in \N^k$.
Clearly, $\rN(\nCinf(U',\nF))$ is a subalgebra of
$\rM(\nCinf(U',\nF))$ and we put
\[
  \rCinf(U,\rF) = \rM(\nCinf(U',\nF))/\rN(\nCinf(U',\nF)).
\]
The proposition below indicates that members of
$\rCinf(U,\rF)$ can be considered as $\rF$-valued infinitely
differentiable functions defined on $U$.

% \medskip
\begin{proposition}
  \label{prp:DA_of_quasi_asymptotic_functions}
  Let $U$ be an open subset of $\rR^k$.  Then the following
  hold.
  \begin{enumerate}
  \item There is an inclusion
    $\rCinf(U,\rF) \to \hom_{\Set}(U,\rF)$.
  \item $\rCinf(U,\rF)$ is a differentiable algebra over
    $\rF$.
  \item If $U$ is an open subset of $\R^k$ then there is an
    injection of differentiable algebras
    $i_U \colon \Cinf(U,\F) \to \rCinf(\ras{U},\rF)$.
  \end{enumerate}
\end{proposition}

\begin{proof}
  For each $f \in \rCinf(U,\rF)$ and $\alpha \in \N^k$
  define a function $D^{\alpha}f \colon U \to \rF$ by
  \begin{equation}
    \label{eq:derivatives_of_quasi_asymptotic_function}
    D^{\alpha}f(x) = [D^{\alpha}\phi(y)] \in \rF \quad (x
    \in U)
  \end{equation}
  where $\phi \in \rM(\nCinf(U',\nF))$ is a representative
  of $f$ and $y \in (q^k)^{-1}(x)$.  To see that
  \eqref{eq:derivatives_of_quasi_asymptotic_function} is
  well defined, let $\psi \in \rM(\nCinf(U',\nF))$ be
  another representative of $f$ and $z \in (q^k)^{-1}(x)$.
  Then we have
  \[
    D^{\alpha}\phi(y) - D^{\alpha}\psi(z) =
    D^{\alpha}(\phi-\psi)(y) + D^{\alpha}\psi(y) -
    D^{\alpha}\psi(z).
  \]
  Since $\phi-\psi \in \rN(\nCinf(U',\nF))$, we have
  $D^{\alpha}(\phi-\psi)(y) \in \rN(\nF)$.  On the other
  hand, because $(1-c)y+cz \in (q^k)^{-1}(x)$ for
  $c \in \nst{(0,1)}$ and $y-z \in \rN(\nF)^k$, we have
  $D^{\alpha}\psi(y) - D^{\alpha}\psi \circ q^k)(z) \in
  \rN(\nF)$ by applying the MVT to
  $D^{\alpha}\psi \in \nCinf(U',\nF)$.  Thus we have
  $D^{\alpha}\phi(y) - D^{\alpha}\psi(z) \in \rN(\nF)$,
  implying
  \eqref{eq:derivatives_of_quasi_asymptotic_function} is
  well defined.
  In particular, the correspondence $f \mapsto D^0{f}$
  provides an inclusion
  $\rCinf(U,\rF) \to \hom_{\Set}(U,\rF)$ and $\rCinf(U,\rF)$
  is a differential algebra with respect to differential
  operators $f \mapsto D^{\alpha}f$, proving (1) and (2).

  To prove (3), consider the inclusion
  $i_U \colon \Cinf(U,\F) \to \nCinf(\nst{U},\nF)$ which
  takes $f \in \Cinf(U,\F)$ to the function $\nst{\!f}$
  defined by $\nst{\!f}([x_n]) = [f(x_n)]$.
  But its derivatives $D^{\alpha}(\nst{\!f})$ have values in
  $\rM(\nF)$ because for any $(x_n) \in U^{\N}$ there is a
  net $(c_n) \in \N^{\N}$ satisfying
  $\abs{D^{\alpha}f(x_n)} \leq n^{c_n}$ for $n \geq 2$.
  Hence $\nst{\!f} \in \rM(\nCinf(\nst{U},\nF))$ holds for
  every $f \in \Cinf(U,\F)$ and we can define an injection
  $i_U \colon \Cinf(U,\F) \to \rCinf(\ras{U},\rF)$ as the
  composition
  $\Cinf(U,\F) \to \rM(\nCinf(\nst{U},\nF)) \to
  \rCinf(\ras{U},\rF)$.
\end{proof}

The IVT and the MVT also hold for quasi-asymptotic
functions.

% \medskip
\begin{proposition}
  \label{prp:asymptotic_version_of_IVT_and_MVT}
  Let $U$ be an internal open subset of the form
  $U = [U_n] \subset \rR^k$ and $f \in \rCinf(U,\rR)$.
  Suppose $f$ is represented by a net of smooth functions
  $(f_n \colon U_n \to \R)$.  Then the following hold for
  any $x,\, y \in U$ such that $(1-t)x+ty \in U$ for
  $t \in \ras{[0,1]}$.
  \begin{description}[topsep=4pt]
  \item[\rm IVT:] If $f(x) \neq f(y)$ then for any
    $r \in \rF$ between $f(x)$ and $f(y)$ there exists
    $z \in U$ such that $f(z) = r$ holds.
  \item[\rm MVT:] There exists $c \in \ras{(0,1)}$ such that
    $f(y)-f(x) = \nabla f((1-c)x+cy) \cdot (y-x)$ holds.
  \end{description}
  Here we denote by $\nabla$ the gradient
  $(\partial_1,\cdots,\partial_k)$ and by $\cdot$ the dot
  product.
\end{proposition}
% \medskip

%%%%%%%%%%%%%%%%%%%%
\subsection{Infinitely differentiable maps between open
  subsets of nonstandard Euclidean spaces}
Given open subsets $U \subset \bR^k$ and $V \subset \bR^l$,
let $\bCinf[r](U,V)$ denote the set of maps
$f \colon U \to V$ whose component functions
$f_1,\, \cdots,\, f_l \colon U \to \bR$ belong to
$\bCinf[r](U,\bR)$, i.e.\
\[
  % \bCinf(U,V) := \bCinf(U,\bR)^l \cap \hom_{\Set}(U,V)
  % \subset \hom_{\Set}(U,\bR^l).
  \bCinf[r](U,V) := \{ f = (f_1,\, \cdots,\, f_l) \colon U
  \to \bR^l \mid f_1,\, \cdots,\, f_l \in
  \bCinf[r](U,\bR),\ \Image{f} \subset V \}.
\]
Members of $\bCinf(U,V)$ are called infinitely
differentiable maps from $U$ to $V$.
% Notice that smoothness in the sense of diffeology implies
% infinite differentiability in the case $\bullet = *$, but
% not necessarily in the case $\bullet = \rho$.

% \medskip
\begin{proposition}
  \label{prp:composition_of_nonstandard_maps}
  Suppose $U \subset \bR^k$, $V \subset \bR^l$ and
  $W \subset \bR^m$ are open subsets.  Then the composition
  of maps % $C(V,W) \times C(U,V) \to C(U,W)$
  induces a pairing\/
  $\bCinf(V,W) \times \bCinf(U,V) \xrightarrow{\circ}
  \bCinf(U,W)$ which is subject to the chain rule:
  \begin{equation}
    \label{eq:chain_rule}
    J_{g \circ f}(x) = J_{g}(f(x))\,J_{f}(x) \quad (f \in
    \bCinf(U,V),\ g \in \bCinf(V,W),\ x \in U)
  \end{equation}
  where $J_{f}(x) = (\partial_jf_i(x))_{i,j}$ is the
  Jacobian matrix of $f = (f_1,\cdots,f_l)$ at $x$ and
  similarly for $J_{g}$ and $J_{g \circ f}$.
\end{proposition}

\begin{proof}
  The case $\bullet = *$ is immediate from the definition of
  differential operators.
  To prove the case $\bullet = \rho$, write
  $U' = (q^k)^{-1}(U)$, $V' = (q^l)^{-1}(V)$ and
  $W' = (q^m)^{-1}(W)$, where $q$ is the projection
  $\rM(\nR) \to \rR$, and let
  \[
    \rL(U',V') := \rL(\nCinf(U',\nR))^l \cap \nCinf(U',V')
    \subset \nCinf(U',\nR)^l %
    \quad (\mathbf{L = M,\, N}).
  \]
  Then the product of the projections
  $\rM(\nCinf(U',\nR))^l \to \rCinf(U,\rR)^l$ restricts to a
  map $\Psi \colon \rM(U',V') \to \rCinf(U,V)$ and we have a
  diagram:
  \begin{equation}
    \label{eq:composition_of_smooth_maps}
    \vcenter{%
      \xymatrix{%
        \nCinf(V',W') \times \nCinf(U',V') \ar[r]^-{\circ}
        & \nCinf(U',W')
        \\
        \rM(V',W') \times \rM(U',V') \ar[r]^-{\circ}
        \ar[u]^-{\cup} \ar[d]_-{\Psi \times \Psi}
        & \rM(U',W') \ar[u]_-{\cup} \ar[d]^-{\Psi}
        \\
        \rCinf(V,W) \times \rCinf(U,V) \ar[r]^-{\circ}
        & \rCinf(U,W).
      }%
    }%
  \end{equation}
  The chain rule for the case $\bullet = *$ implies that the
  upper square of \eqref{eq:composition_of_smooth_maps} is
  commutative and also that
  $\rM(V',W') \times \rM(U',V') \xrightarrow{\circ}
  \rM(U',W')$ restricts to
  \[
    \rN(V',W') \times \rM(U',V') \cup \rM(V',W') \times
    \rN(U',V') \xrightarrow{\circ} \rN(U',W').
  \]
  Thus there is a map
  $\rCinf(V,W) \times \rCinf(U,V) \xrightarrow{\circ}
  \rCinf(U,W)$ which makes the diagram
  \eqref{eq:composition_of_smooth_maps} commutative and is
  subject to the chain rule \eqref{eq:chain_rule}.
\end{proof}

% The proposition above implies that nonstandard Euclidean
% open sets and infinitely differentiable maps form a
% subcategory $\bEucOp$ of $\Diff$.  Note that $\nEucOp$ is a
% full subcategory, but $\rEucOp$ is not.
The proposition above says that nonstandard Euclidean open
sets and infinitely differentiable maps form a category
which we denote $\bEucOp$.

% \medskip
\begin{theorem}
  \label{thm:nonstandard_Euclidean_open_sets_form_a_site}
  The category\/ $\bEucOp$ is a concrete site equipped with
  the coverage consisting of open covers.  Moreover, there
  is a faithful functor $i \colon \EucOp \to \bEucOp$ that
  preserves covering families.
\end{theorem}

\begin{proof}
  Let $\{U_i \to U\}$ be a cover of $U \in \bEucOp$ by its
  open subsets, and $g \colon V \to U$ be a morphism in
  $\bEucOp$.  Then we can define a cover $\{V_j \to V\}$ of
  $V$ by $V_j = g^{-1}(U_j)$ and obtain a commutative square
  \[
    \xymatrix{%
      V_j \ar[d]_-{\cap} \ar[r]^-{g|V_j}
      & U_j \ar[d]^-{\cap}
      \\
      V \ar[r]^-{g} & U }%
  \]
  Hence the function which assigns to each $U \in \bEucOp$
  the collection of open covers of $U$ defines a
  coverage on $\bEucOp$.

  To construct $i \colon \EucOp \to \bEucOp$, assign to any
  $f \in \Cinf(U,V)$ with $U \subset \R^k$ and
  $V \subset \R^l$ a morphism
  $\bas{\!f} \in \bCinf(\bas{U},\bas{V})$ defined by
  $\nst{\!f}([x_n]) = [f(x_n)] \in \nst{V}$ for
  $[x_n] \in \nst{U}$, and
  $\ras{\!f} = \Psi(\nst{\!f}|\rM(\ras{U}',\ras{V}')) \in
  \rCinf(\ras{U},\ras{V})$.
  % It is now evident from
  % Proposition~\ref{prp:composition_of_nonstandard_maps}
  % that
  Then the correspondence which takes $U \in \EucOp$ to
  $\bas{U} \in \bEucOp$ and $f \in \Cinf(U.V)$ to
  $\bas{\!f} \in \bCinf(\bas{U},\bas{V})$ gives a faithful
  functor $i \colon \EucOp \to \bEucOp$ that preserves
  covering families.
\end{proof}

Note that
$i \colon \Cinf(U,V) \to \bCinf(\bas{U},\bas{V})$ is the
previously defined $i_U$ in the case $V = \R$.

%%%%%%%%%%%%%%%%%%%%%%%%%%%%%%%%%%%%%%%%%%%%%%%%%%%%%%%%%%%%
% Section 4
%%%%%%%%%%%%%%%%%%%%%%%%%%%%%%%%%%%%%%%%%%%%%%%%%%%%%%%%%%%%
\section{The category of nonstandard diffeological spaces}
%%%%%
% \subsection{$\bullet$-Diffeological spaces as sheaves on
%   $\bEucOp$}%
\subsection{Nonstandard diffeological spaces as sheaves on
  $\bEucOp$}%
Recall that diffeological spaces can be defined as the
concrete sheaves on the site $\EucOp$.  This leads us to the
definition of extended diffeological spaces below.  % \medskip

\begin{definition}
  \label{dfn:nonstandard_diffeological_spaces}
  For $\bullet = *,\, \rho$ we denote by $\bDiff$ the
  category of concrete sheaves on $\bEucOp$.
  Objects of $\bDiff$ are called nonstandard diffeological
  spaces and their morphisms are called nonstandard smooth
  maps.  More specifically, we use the term hyper instead of
  nonstandard in the case $\bullet = *$ and quasi-asymptotic
  in the case $\bullet = \rho$.
\end{definition}
% \medskip

As in the case of usual diffeological spaces, nonstandard
diffeological spaces can be interpreted in terms of plots
defined on open subsets of nonstandard Euclidean spaces.
% \medskip

\begin{example}[Standard $\bullet$-diffeology]
  \label{ex:standard_nonstandard_diffeology}
  The standard $\bullet$-diffeology of a subset $X$ of
  $\bR^k$ is defined by the formula
  \[
    X(U) = \{ f \colon U \to X \mid \text{the composition
      $U \xrightarrow{f} X \subset \bR^k$ is smooth} \}
    % \quad (U \in \bEucOp).
  \]
  for $U \in \bEucOp$.  In particular, $\bR$ is a
  nonstandard diffeological space with respect to the
  standard $\bullet$-diffeology, and so is $\bC$ under the
  isomorphism $\bC \cong \bR^2$.
\end{example}
% \medskip

% Then $LX$ has the box topology generated by the family
% $\{ \prod_{j \in J} U_j \mid U_j\ \text{open in}\ LX_j \}$,
% hence $LX$ is not homeomorphic to the topological product
% $\prod_{j \in J}LX_j$ unless $LX_J$ is nontrivial for only
% finitely many $j$.
% %
% Notice that the left adjoint functor
% $L \colon \Diff \to \Top$ preserves colimits but not
% necessarily limits.

%%%%%%%%%%%%%%%%%%%%
% Let us regard $\F^{\N}$ as the product of countably many
% copies of $\F$ equipped with the standard diffeology.
% %
% Then we can endow $\nF$ with the quotient diffeology by the
% projection $q \colon \F^{\N} \to \nF$, and consequently its
% subquotient $\rF = \rM(\nF)/\rN(\nF)$.  With respect to
% these diffeological structures we can prove the following.

Recall that $\F$ is a field object in the sense that its
algebraic operations of addition, subtraction,
multiplication and division by non-zero elements are
morphisms (i.e.\ smooth maps) in $\Diff$.  This property
propagates to the nonstandard field $\bF$.

% \medskip
\begin{proposition}
  \label{prp:bF_is_smooth_field}
  % For both $\nF$ and $\rF$, their arithmetic operations of
  % addition, subtraction, multiplication and division by
  % non-zero objects are morphisms in $\bDiff$.
  The arithmetic operations of addition, subtraction,
  multiplication and division by non-zero objects of the
  field\/ $\bF$ are morphisms in $\bDiff$.
\end{proposition}
% \medskip

For any $X,\, Y \in \bDiff$ denote by $\hom(X,Y)$ the set of
morphisms (i.e.\ nonstandard smooth maps) from $X$ to $Y$
and define $\bCinf(X,Y) \in \bDiff$ by the formula:
\[
  \bCinf(X,Y)(U) = \{ \sigma \colon U \to \hom(X,Y) \mid
  \forall \tau \in X(U),\ (u \mapsto \sigma(u)(\tau(u)) \in
  Y(U) \}.
\]
% for every $U \in \bEucOp$.
Observe that $\bCinf(X,Y)$ has the underlying set
\[
  \abs{\bCinf(X,Y)} = \bCinf(X,Y)(0) = \hom(X,Y).
\]

% \medskip
% \begin{theorem}
%   \label{thm:category_of_nonstandard_diffeological_spaces}
%   The category $\bDiff$ is
%   \begin{enumerate}
%   \item closed under small limits and colimits,
%   \item enriched over itself with $\bCinf(X,Y)$ as
%     hom-objects, and
%   \item cartesian closed with $\bCinf(X,Y)$ as
%     exponential objects.
%   \end{enumerate}
%   Moreover, there is an adjunction
%   $\tr \dashv \dg \colon \Diff \rightleftarrows \bDiff$ such
%   that
%   \begin{enumerate}[resume]
%   \item $\tr$ extends $i \colon \EucOp \to \bEucOp$ and
%     preserves small colimits and finite limits, and
%   \item $\dg$ extends the inclusion $\bEucOp \to \Diff$ and
%     preserves small limits.
%   \end{enumerate}
% \end{theorem}
\begin{theorem}
  \label{thm:category_of_nonstandard_diffeological_spaces}
  The category $\bDiff$ is
  \begin{enumerate}
  \item closed under small limits and colimits,
  \item enriched over itself with $\bCinf(X,Y)$ as
    hom-objects, and
  \item cartesian closed with $\bCinf(X,Y)$ as
    exponential objects.
  \end{enumerate}
  Moreover, there is a functor $\tr \colon \Diff \to \bDiff$
  which extends $i \colon \EucOp \to \bEucOp$ and has a
  right adjoint $\dg \colon \bDiff \to \Diff$.
\end{theorem}

\begin{proof}
  (1) A category is closed under small limits if it has
  equalizers and small products, and is closed under small
  colimits if it has coequalizers and small coproducts.
  Thus the statement is a consequence of the constructions
  below.  % \medskip

  \begin{itemize}
  \item The product $\prod_{j \in J}X_j$ of $X_j \in \bDiff$
    ($j \in J$) is the sheaf
    \[
      U \mapsto \tprod_{j \in J}X_j(U) \quad (U \in
      \bEucOp).
    \]
    % with projections $\prod_{j \in J}X_j \to X_j$ given by
    % level-wise projections.
  \item The coproduct $\coprod_{j \in J}X_j$ of
    $X_j \in \bDiff$ ($j \in J$) is the sheafification of
    the presheaf
    \[
      U \mapsto \tcoprod_{j \in J} X_j(U) \quad (U \in
      \bEucOp).
    \]
    % More explicitly,
    % $\sigma \colon U \to \coprod_{j \in J}\abs{X_j}$ is a
    % plot of $\coprod_{j \in J}X_j$ if every $x \in U$ has
    % a neighborhood $V \subset U$ and a plot
    % $\tau \in X_j(V)$ such that $\tau = \sigma|V$.  The
    % inclusion $X_j \to \coprod_{j \in J}X_j$ is given by
    % the maps $\sigma \mapsto i_j \circ \sigma$ for
    % $\sigma \in X_j(U)$.
  \item The equalizer $\Eq(f,g)$ of a pair of smooth maps
    $f,\, g \colon X \to Y$ is a subsheaf of $X$,
    \[
      U \mapsto \Eq(f,g)(U) = \{ \sigma \in X(U) \mid f
      \circ \sigma = g \circ \sigma \} \quad (U \in
      \bEucOp).
    \]
  \item The coequalizer $\Coeq(f,g)$ of
    $f,\, g \colon X \to Y$ is the sheafification of the
    presheaf
    \[
      U \mapsto \Coeq(f,g)(U) = \{ p \circ \sigma \mid
      \sigma \in Y(U) \} \quad (U \in \bEucOp)
    \]
    where $p$ is the projection of $\abs{Y}$ onto its
    quotient by the minimal equivalence relation such that
    $f(x) \sim g(x)$ for every $x \in \abs{X}$.
  \end{itemize}
  % \medskip

  (2) We need to show that the composition
  \( \bCinf(Y,Z) \times \bCinf(X,Y) \xrightarrow{\circ}
  \bCinf(X,Z) \) is a morphism in $\bDiff$.  Let
  $(\tau,\sigma) \in \bCinf(Y,Z)(U) \times \bCinf(X,Y)(U)$
  and $\eta \in X(U)$.  Then
  $u \mapsto \sigma(\eta(u)) \in Y(U)$ and consequently,
  \[
    u \mapsto \tau(\sigma(\eta(u))) = (\tau \circ
    \sigma)(\eta(u)) \in Z(U)
  \]
  implying that $\tau \circ \sigma \in \bCinf(X,Z)(U)$ as
  desired.
  
  (3) Let
  $\gamma \colon \hom(X \times Y,Z) \to \hom(X,\bCinf(Y,Z))$
  be the set map which takes a morphism
  $f \colon X \times Y \to Z$ to
  $\gamma(f) \colon X \to \bCinf(Y,Z)$ given by the
  formula:
  \[
    \gamma(f)(\tau(u))(\eta(u)) = f(\tau(u),\eta(u)) \quad
    (\tau \in X(U),\, \eta \in Y(U),\, u \in U \in \bEucOp).
  \]
  To prove the cartesian closedness of $\bDiff$, it suffices
  to show that $\gamma$ induces an isomorphism
  $\bCinf(X \times Y,Z) \xrightarrow{\cong}
  \bCinf(X,\bCinf(Y,Z))$.
  Let $\sigma \colon U \to \hom(X \times Y,Z)$ be a plot of
  $\bCinf(X \times Y,Z)$.  Then the composition
  $\gamma_*(\sigma) = \gamma \circ \sigma$ is a plot of
  $\bCinf(X,\bCinf(Y,Z))$ because for any $\tau \in X(U)$,
  $\eta \in Y(U)$ and $u \in U$ we have
  \[
    \gamma(\sigma(u))(\tau(u))(\eta(u)) =
    \sigma(u)(\tau(u),\eta(u)) \in Z(U).
  \]
  Thus we obtain a morphism
  $\gamma_* \colon \bCinf(X \times Y,Z) \to
  \bCinf(X,\bCinf(Y,Z))$ in $\bDiff$.  That $\gamma_*$ is an
  isomorphism follows from the fact that $\gamma$ has an
  inverse $\gamma^{-1}$
  % \colon \hom(X,\bCinf(Y,Z)) \to \hom(X \times Y,Z)$
  which takes $g \in \hom(X,\bCinf(Y,Z))$ to $\gamma^{-1}(g)
  \in \hom(X \times Y,Z)$ given by the
  formula:
  \[
    \gamma^{-1}(g)(\tau(u),\eta(u)) = g(\tau(u))(\eta(u))
    \quad ((\tau,\eta) \in X(U) \times Y(U),\, u \in U \in
    \bEucOp).
  \]

  We now construct an adjunction $\tr \dashv \dg \colon
  \Diff \rightleftarrows \bDiff$.  Given $X \in
  \Diff$, let $\bas{\!X}$ denote the object of
  $\bDiff$ having the underlying set
  \[
    \textstyle %
    \abs{\bas{\!X}} := \underset{V \in \EucOp}{\coprod}\,
    \bas{V} \times X(V)/\!\sim
  \]
  where $(v,\sigma) \in \bas{V} \times
  X(V)$ is identified with $(w,\tau) \in \bas{W} \times
  X(W)$ if there is a smooth map $\phi \in
  \Cinf(V,W)$ such that $\sigma = \tau \circ
  \phi$ and $\bas{\!\phi}(v) =
  w$, and whose plots are set maps locally of the form
  \begin{equation}
    \label{eq:plots_of_tr}
    U \xrightarrow{\phi} \bas{W} \xrightarrow{\bas{\sigma}}
    \abs{\bas{\!X}} \quad (U \in \bEucOp,\ \phi \in
    \bCinf(U,\bas{W}),\ \sigma \in X(W))
  \end{equation}
  where $\bas{\sigma}$ denotes the composition $\bas{W} =
  \bas{W} \times \{\sigma\} \subset \bas{W} \times X(W) \to
  \abs{\bas{\!X}}$.
  Then we can assign to every $f \in
  \Cinf(X,Y)$ a morphism $\bas{\!f} \in
  \bCinf(\bas{\!X},\bas{Y})$ by the formula:
  $\bas{\!f}([u,\sigma]) = [u,f \circ \sigma] \in
  \abs{\bas{Y}}$.
  Thus, the correspondence $X \mapsto
  \bas{\!X}$ defines a functor $\tr \colon \Diff \to
  \bDiff$, whose restriction to
  $\EucOp$ is naturally isomorphic to $i \colon \EucOp \to
  \bEucOp$ because for every $V \in \EucOp$ the map $\tr(V)
  \to \bas{V}$ induced by the maps $\bas{U} \times V(U) =
  \bas{U} \times \Cinf(U,V) \to \bas{V}$, $(u,\sigma)
  \mapsto \bas{\sigma}(u)$, has an inverse $\bas{V} \to
  \tr(V)$ which takes $v \in
  \bas{V}$ to the class of $(v,1_V) \in \bas{V} \times
  V(V)$.

  On the other hand, $\dg \colon \bDiff \to \Diff$ is
  defined as a functor such that for any $Y \in \bDiff$,
  $\dg(Y)$ has the same underlying set as $Y$ and the plots
  locally of the form
  \begin{equation}
    \label{eq:plots_of_dg}
    U \xrightarrow{\subset} \bas{U} \xrightarrow{\tau}
    \abs{Y} \quad (U \in \EucOp,\ \tau \in Y(\bas{U})).
  \end{equation}
  For any $X \in \Diff$ define
  $\eta \colon \abs{X} \to \abs{\dg(\bas{\!X})} =
  \abs{\bas{\!X}}$ to be the set map
  \[
    \textstyle %
    \abs{X} = %
    \underset{U \in \EucOp}{\coprod}\, U \times X(U)/\!\sim
    % \ \ \overset{\subset}{\longrightarrow} %
    \ \ \to \ %
    \underset{U \in \EucOp}{\coprod}\, \bas{U} \times
    X(U)/\!\sim \ = \abs{\bas{\!X}}
  \]
  induced by the inclusions
  $U \times \{\sigma\} \subset \bas{U} \times \{\sigma\}$
  for $\sigma \in X(U)$.  Then $\eta$ is a morphism in
  $\Diff$ because the composition
  $\eta \circ \sigma \colon U \to \abs{\bas{\!X}}$ is a plot
  of $\dg(\bas{\!X})$ for any $\sigma \in X(U)$ as we see
  from \eqref{eq:plots_of_dg} by taking the inclusion
  $\bas{U} \times \{\sigma\} \subset \abs{\bas{\!X}}$ as
  $\tau$.  Hence we obtain a natural transformation
  $\eta \colon 1 \to \dg \circ \tr$.

  On the other hand, for any $Y \in \bDiff$ we can define
  $\varepsilon \colon \abs{\bas{\dg(Y)}} \to \abs{Y}$ as
  follows.  Let $[u,\sigma] \in \abs{\bas{\dg(Y)}}$ be a
  point represented by
  $(u,\sigma) \in \bas{U} \times \dg(Y)(U)$.  We may suppose
  without loss of generality that $\sigma$ is a composition
  $U \xrightarrow{\subset} \bas{U} \xrightarrow{\tau}
  \abs{Y}$ with $\tau \in Y(\bas{U})$,
  % (cf.\ \eqref{eq:plots_of_dg})
  and we can put
  $\varepsilon([u,\sigma]) = \tau(u) \in \abs{Y}$.  This
  does not depend on the choice of representative
  $(u,\sigma)$ and determines a map
  $\varepsilon \colon \abs{\bas{\dg(Y)}} \to \abs{Y}$.
  To see that $\varepsilon$ is a morphism in $\bDiff$,
  consider a generating plot
  $U \xrightarrow{\phi} \bas{W} \xrightarrow{\bas{\sigma}}
  \abs{\bas{\dg(Y)}}$ of $\bas{\dg(Y)}$, where
  $U \in \bEucOp$, $W \in \EucOp$,
  $\phi \in \bCinf(U,\bas{W})$, and $\sigma \in \dg(Y)(W)$
  is of the form
  $W \xrightarrow{\subset} \bas{W} \xrightarrow{\tau}
  \abs{Y}$.  Then the composition
  $U \xrightarrow{\phi} \bas{W} \xrightarrow{\bas{\sigma}}
  \abs{\bas{\dg(Y)}} \xrightarrow{\varepsilon} \abs{Y}$ is a
  plot of $Y$ because it coincides with the plot
  $U \xrightarrow{\phi} \bas{W} \xrightarrow{\tau} \abs{Y}$
  of $Y$, implying $\varepsilon$ is a morphism in $\bDiff$.
  Thus we have a natural transformation
  $\varepsilon \colon \tr \circ \dg \to 1$.

  That $\dg$ is a right adjoint of $\tr$ follows from the
  fact that the compositions
  \[
    \tr \xrightarrow{\tr \circ \eta} \tr \circ \dg \circ \tr
    \xrightarrow{\varepsilon \circ \dg} \tr %
    \ \ \text{and} \ \ %
    \dg \xrightarrow{\eta \circ \dg} \dg \circ \tr \circ \dg
    \xrightarrow{\dg \circ \varepsilon} \dg
  \]
  are the identity natural transformations of $\tr$ and
  $\dg$ respectively.
\end{proof}

\begin{remark}
  It follows by the property of adjunction that the left
  adjoint functor $\tr$ preserves small colimits and its
  right adjoint $\dg$ preserves small limits.  Additionally,
  $\tr$ preserves finite limits by the commutativity of
  filtered colimits with finite limits.
\end{remark}
% \medskip

In addition to the adjunction
$\tr \dashv \dg \colon \Diff \rightleftarrows \bDiff$ stated
above, there is an adjunction
$\bas{\!L} \dashv \bas{\!R} \colon \bDiff \rightleftarrows
\Top$, where $\bas{\!L}$ takes a nonstandard diffeological
space $X$ to its underlying set equipped with the initial
topology with respect to the plots of $X$, called the
$\bD$-topology of $X$, and $\bas{\!R}$ takes a topological
space $Y$ to the object $\bas{\!R}(Y) \in \bDiff$ such that
$\bas{\!R}(Y)(U)$ consists of all the continuous maps
$U \to Y$.
Since every plot $\sigma \colon U \to X$ of $X \in \bDiff$
determines a continuous map $U \to \bas{\!L}(X)(U)$, there
is a natural inclusion
$X(U) \subset \bas{\!R}\,\bas{\!L}(X)(U)$ providing the unit
of the adjunction
$\eta \colon X \to \bas{\!R}\,\bas{\!L}(X)$.
On the other hand, the counit of the adjunction
$\varepsilon \colon \bas{\!L}\,\bas{\!R}(Y) \to Y$ is given
by the identity of the underlying set which is continuous
because the topology of $\bas{\!L}\,\bas{\!R}(Y)$ is finer
than the original topology of $Y$.

% \medskip
\begin{proposition}
  \label{prp:properties_of_L}
  If $X$ is an open subset of\, $\bR^n$ considered as an
  object of $\bDiff$ then its $\bD$-topology is the subspace
  topology induced from the internal topology on\/ $\bR^n$.
\end{proposition}

\begin{proof}
  % Let $W$ be a subset of the underlying set of $X$.  Since
  % $\abs{X}$ is open in $\bR^n$, $W$ is open in $\abs{X}$ if
  % and only if so is in $\bR^n$.  Hence $W$ is open in
  % $\abs{X}$ implies $\sigma^{-1}(W)$ is open in $U$ for
  % every plot $\sigma \in X(U)$.  The converse implication is
  % proved by taking the identity of $X \subset \bR^n$ as a
  % plot of $X$.
  This is a consequence of the fact that the
  $\bullet$-diffeology of $X$ is generated by the inclusion
  of $X$ as an open subset of $\bR^n$ and the $\bD$-topology
  of $\bR^n$ is nothing but the internal topology of
  $\bR^n$.
\end{proof}

\begin{proposition}
  \label{prp:natural_extension_is_continuous}
  There are natural transformations
  \[
    L \circ \dg \to \bas{\!L} \colon \bDiff \to \Top, \quad
    L \to \bas{\!L} \circ \tr \colon \Diff \to \Top.
  \]
\end{proposition}

\begin{proof}
  Since the plots of $\dg(Y) \in \Diff$ correspond to the
  plots of $Y \in \Diff$ of the form $\bas{U} \to Y$ for
  $U \in \bEucOp$, the identity of $\abs{\dg(Y)} = \abs{Y}$
  determines a continuous map $L(\dg(Y)) \to \bas{\!L}(Y)$,
  hence the natural transformation
  $L \circ \dg \to \bas{\!L}$.

  The natural transformation $L \to \bas{\!L} \circ \tr$ is
  given by the composition
  \[
    L(X) \xrightarrow{L\eta} L(\dg(\bas{\!X})) \to
    \bas{\!L}(\bas{\!X}) \quad (X \in \Diff)
  \]
  where $\eta \colon X \to \dg(\bas{\!X})$ is the unit of
  the adjunction
  $\tr \dashv \dg \colon \Diff \rightleftarrows \bDiff$.
\end{proof}

\begin{definition}
  \label{dfn:extended_smooth_map}
  By extending the notion of smoothness, we say that a set
  map $f$ from $X \in \Diff$ to $Y \in \bDiff$ is smooth if
  it is smooth as a map $X \to \dg(Y)$.  It follows by
  Proposition~\ref{prp:natural_extension_is_continuous} that
  such an $f$ is continuous as a map
  $L(X) \to \bas{\!L}(Y)$.
\end{definition}

% \medskip
\begin{example}
  \label{ex:projection_is_smooth}
  The projections $\F^{\N} \to \nF$ and
  $\rM(\F^{\N}) \to \rF$ are smooth for $\F = \R,\, \C$.  In
  fact, the former takes a plot
  $\sigma = (\sigma_n) \colon U \to \F^{\N}$ to the plot
  $U \subset \nst{U} \xrightarrow{\nst{\!\sigma}} \nF$ of
  $\dg(\nF)$, where
  $\nst{\!\sigma}([u_n]) = [\sigma_n(u_n)] \in \nF$ for
  $[u_n] \in \nst{U}$, and similarly for the latter.
\end{example}

% In particular, the map
% $\Cinf(X,Y) \to \bCinf(\bas{\!X},\bas{Y})$ which assigns to
% a smooth map $f \colon X \to Y$ its natural extension
% $\bas{\!f} \colon \bas{\!X} \to \bas{Y}$ is smooth for any
% $X,\, Y \in \Diff$.  Thus we have the following.

% \medskip
\begin{proposition}
  \label{prop:contunuous_extension_of_function_spaces}
  The natural map $\Cinf(X,Y) \to \bCinf(\bas{\!X},\bas{Y})$
  which takes a smooth map $f \colon X \to Y$ to
  $\tr(f) = \bas{\!f} \colon \bas{\!X} \to \bas{Y}$ is
  smooth, and hence continuous with respect to the
  $D$-topology on $\Cinf(X,Y)$ and the $\bD$-topology on
  $\bCinf(\bas{\!X},\bas{Y})$.
\end{proposition}

\begin{proof}
  It suffices to show that the composition
  $U \xrightarrow{\sigma} \Cinf(X,Y) \xrightarrow{\tr}
  \bCinf(\bas{\!X},\bas{Y})$ is a plot of
  $\dg(\bCinf(\bas{\!X},\bas{Y}))$ for any $U \in \EucOp$
  and $\sigma \in \Cinf(X,Y)(U)$.
  % or, equivalently, for any plot of the form
  % $\tau|U \colon U \subset \bas{U} \xrightarrow{\tau}
  % \bas{\!X}$ with $\tau \in \bas{\!X}(\bas{U})$ the
  % composition $U \to \bas{Y}$,
  % $u \mapsto [\sigma(u)(\tau_n(u))]$, is a plot of $\dg(Y)$.
  To see this, let us take a plot of the form
  $U \subset \bas{U} \xrightarrow{\tau} \bas{\!X} \in
  \dg(\bas{\!X})(U)$ with $\tau \in \bas{\!X}(\bas{U})$, and
  show that the composition $U \to \bas{Y}$,
  $u \mapsto \tr(\sigma(u))(\tau(u))$, is a plot of
  $\dg(\bas{Y})$.
  Since $\tau$ is a plot of $\bas{\!X}$, it is (locally) the
  composition of an internal smooth map
  $[\tau_n] \colon \bas{U} \to \bas{W}$ followed by the
  natural map
  $q_{\nu} \colon \bas{W} = \bas{W} \times \{\nu\} \subset
  \bas{W} \times X(W) \to \bas{\!X}$ for some
  $\nu \in X(W)$.
  But then, we have
  \[
    \tr(\sigma(u))(\tau(u)) =
    \bas{\sigma(u)}(q_{\nu}([\tau_n(u)])) = q_{\sigma(u)
      \circ \nu}([\tau_n(u)]) \in \bas{Y} \quad (u \in U)
  \]
  where $q_{\sigma(u) \circ \nu} \colon \bas{W} \to \bas{Y}$
  is the natural map corresponding to
  $\sigma(u) \circ \nu \in Y(W)$.  This means that the map
  $u \mapsto \tr(\sigma(u))(\tau(u))$ factors as a
  composition $U \subset \bas{U} \to \bas{Y}$, hence belongs
  to $\dg(\bas{Y})(U)$.
\end{proof}

%%%%%%%%%%%%%%%%%%%%%%%%%%%%%%%%%%%%%%%%%%%%%%%%%%%%%%%%%%%%
% Section 5
%%%%%%%%%%%%%%%%%%%%%%%%%%%%%%%%%%%%%%%%%%%%%%%%%%%%%%%%%%%%
\section{Schwartz distributions as quasi-asymptotic
  functions}
%%%%%%%%%%
\subsection{The space of Schwartz distributions}
Let $U$ be an open subset of a Euclidean space $\R^k$.  The
space of Schwartz distributions $\D'(U)$ is a continuous
dual of the vector space $\D(U)$ of test functions
$U \to \F$ equipped with a locally convex topology such that
a sequence $\{\varphi_n\}$ converges to $\varphi \in \D(U)$
if and only if
\begin{enumerate}[topsep=4pt]
\item[(i)] there exists a bounded subset $M$ such that
  $\supp \varphi_n \subset M$ for all $n \in \N$, and
\item[(ii)] $\{D^{\alpha}\varphi_n\}$ converges uniformly to
  $D^{\alpha}\varphi$ for any multi-index $\alpha \in \N^k$.
\end{enumerate}
% $\D'(U)$ is a LCTVS.
Any locally integrable function $f \colon U \to \F$ gives
rise to a distribution %
\[
  T_f \colon \varphi \mapsto % \bracket{T_f}{\varphi} =
  \dint_U f(x)\varphi(x)\,dx \quad (\,\varphi \in \D(U)\,).
\]
Thus, $\D'(U)$ contains the space $\mathcal{C}^r(U)$ of
differentiable functions of class $C^r$ ($r \leq \infty$).

For any $T \in \D'(U)$ and $\alpha \in \N^k$ we can define
its derivative $D^{\alpha}T$ by
\[
  \bracket{D^{\alpha}T}{\varphi} =
  (-1)^{\abs{\alpha}}\bracket{T}{D^{\alpha}\varphi} \quad
  (\,\varphi \in \D(U)\,).
\]
This extends the usual partial derivative of smooth
functions because
\[
  D^{\alpha}(T_f) = T_{D^{\alpha}f} \ \text{for all
    $f \in \tCinf(U)$ and $\alpha \in \N^k$.}
\]
holds for all $f \in \tCinf(U)$ and $\alpha \in \N^k$.

% \medskip
\begin{theorem}
  \label{thm:continuous_inclusion_of_Dist}
  There is an injection
  $\I_U \colon \D'(U) \to \rCinf(\ras{U},\rF)$ of
  differential vector spaces which is continuous with
  respect to the $\rD$-topology on $\rCinf(\ras{U},\rF)$ and
  extends the inclusion $\tCinf(U) \to \rCinf(\ras{U},\rF)$.
\end{theorem}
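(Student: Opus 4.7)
The plan is to adapt the classical Colombeau embedding $\D'(U) \to \Gs(U)$ described in \cite{GKOS}, using mollification but with the smaller denominator ideal $\rN(\Cinf(\ras{U},\nF))$ in place of Colombeau's. I begin with the compactly supported case. Fix a mollifier $\phi \in \D(\R^k)$ with $\int \phi(x)\,dx = 1$ and vanishing higher moments $\int x^{\alpha}\phi(x)\,dx = 0$ for every multi-index $\alpha$ with $\abs{\alpha} \geq 1$, and set $\phi_n(x) = n^{k}\phi(nx)$. For $T \in \mathcal{E}'(U)$ form the net $T_n = T * \phi_n \in \Cinf(U,\F)$, well-defined on any fixed relatively compact subset once $n$ is large enough. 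Standard distribution-theoretic estimates yield
\[
  \abs{D^{\alpha}T_n(x)} \leq C_{\alpha}\, n^{\abs{\alpha}+m}
\]
on each compact $K \subset U$, where $m$ is the order of $T$ near $K$; this is precisely the moderateness needed for $(T_n)$ to represent a class in $\rM(\Cinf(\ras{U},\nF))$, and hence an element $\I_U^c(T) \in \rCinf(\ras{U},\rF)$.

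For general $T \in \D'(U)$ I exploit the sheaf property of $\rCinf(-,\rF)$ on $\ras{U}$. Take a locally finite, relatively compact open cover $\{U_i\}$ of $U$ with smooth partition of unity $\{\chi_i\}$, and define
\[
  \I_U(T) := \textstyle\sum_i\, \I_{U_i}^c(\chi_i T),
\]
the sum being locally finite on $\ras{U}$. Well-definedness modulo $\rN(\Cinf(\ras{U},\nF))$ requires checking that (a) $\I_{U_i}^c(S)$ is independent of the chosen mollifier, and (b) localization commutes with mollification up to $\rN$. Both reductions are driven by Taylor expansion combined with the vanishing-moment condition on $\phi$: the difference between two such approximations decays faster than any polynomial $1/n^d$, placing it in $\rN(\Cinf(\ras{U},\nF))$.

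Linearity of $\I_U$ is immediate. Compatibility with the derivations $D^{\alpha}$ (the statement ``differential vector spaces'') follows from the identity $D^{\alpha}(T*\phi_n) = (D^{\alpha}T)*\phi_n$ together with the construction of derivations on $\rCinf$ via representatives. Injectivity is obtained by pairing: if $\I_U(T) = 0$ then $\langle T_n, u\rangle \in \rN(\F^{\N})$ for each $u \in \D(U)$, while $\langle T_n,u\rangle \to \langle T,u\rangle$ in $\F$, forcing $T = 0$. That $\I_U$ extends $i_U$ on $\Cinf(U,\F)$ comes from $f*\phi_n - f \in \rN(\F^{\N})$ for smooth $f$, again via Taylor's theorem and the moment condition on $\phi$.

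The step I anticipate to be the main obstacle is continuity with respect to the $D$-topology on $\rCinf(\ras{U},\rF)$, since this topology is defined as the finest making all nonstandard plots continuous. Continuity of $\I_U$ then amounts to showing that any convergent sequence, or more generally any smooth parameterized family, in $\D'(U)$ is carried by the mollification construction into a bona fide plot of $\rCinf(\ras{U},\rF)$. This requires packaging the mollification parametrically and verifying the moderateness estimates \emph{uniformly} in the parameter, which is where the bookkeeping of orders of distributions and the choice of a single locally uniform partition of unity will demand the most care.
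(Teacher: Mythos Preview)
Your overall architecture matches the paper's closely: first embed $\mathcal{E}'(U)$ by mollification, then globalize by a partition of unity, and check linearity, derivation-compatibility, injectivity, and the restriction to $\Cinf(U,\F)$ by the same mechanisms (Taylor expansion plus vanishing moments, pairing with test functions, etc.). So at the structural level you are doing what the paper does.

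There is, however, a genuine error in your setup. You fix $\phi \in \D(\R^k)$ with $\int\phi = 1$ and $\int x^{\alpha}\phi = 0$ for all $\abs{\alpha}\geq 1$. No such nonzero compactly supported function exists: its Fourier transform would be an entire function whose Taylor expansion at the origin is identically $1$, forcing $\hat\phi\equiv 1$ and hence $\phi=\delta$. The paper avoids this by taking the mollifier $\varrho$ in $\mathcal{S}(\R^k)$ rather than $\D(\R^k)$; rapid decrease is enough for the estimates in \eqref{eq:moderateness_of_j} and \eqref{eq:restriction_of_j_to_D}, and in that class mollifiers with all moments vanishing do exist. Without this correction your step (b) and the proof that $\I_U$ extends $i_U$ both fail, because the Taylor remainder argument needs \emph{every} moment to vanish in order to land in $\rN$, not just finitely many.

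On continuity, the paper takes a slightly different route from the one you sketch. Rather than directly verifying that parametrized families of distributions yield plots of $\rCinf(\ras{U},\rF)$, it factors the problem: first it invokes the convenient vector space structure of $\D(U)$ to identify the topological dual $\D'(U)$ with the smooth dual $D'(U)$ and to show that $\D'(U)\to D'(U)$ is continuous for the $D$-topology (this is Lemma~\ref{lmm:locally_convex_topology_is_finer_than_D_topology}, relying on results of Fr\"{o}licher--Kriegl and Giordano--Wu); then it proves that $I_U\colon D'(U)\to\rCinf(\ras{U},\rF)$ is \emph{smooth}, which automatically gives $D$-continuity. Your plan to check uniform moderateness over parameters is essentially the smoothness statement, but you are missing the first reduction, which is where the ``main obstacle'' you anticipate actually gets absorbed.
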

% \medskip

% Here, the term ``continuous map from $X \in \Top$ to
% $Y \in \Diff$'' means a continuous map from $X$ to the
% underlying topological space of $Y$ equipped with the
% $D$-topology.
%
% Note also that the topology of $\tCinf(U)$ given by the
% family of seminorms\,
% $\sup_{\abs{\alpha} \leq l}\norm{D^{\alpha}f}_K$ ($K$ is
% compact in $U$ and $l \geq 0$) is finer than the
% $D$-topology of $\Cinf(U,\F)$.

To prove the theorem we reinterpret the space of
distributions in terms of smooth functionals
% on smooth function spaces
by utilizing the notion of ``convenient vector space''
introduced by \cite[2.6.3]{Frohlicher-Kriegl}.
Every topological vector space $X$ admits a canonical
diffeology characterized as
% whose plots are those $\sigma \colon U \to X$
% such that $\phi \circ \sigma \colon U \to \F$ is smooth for
% any continuous linear functional $\phi \colon X \to \F$.
% In other words, canonical diffeology is
the coarsest diffeology with respect to which every
continuous linear functional $X \to \F$ is smooth (cf.\
\cite[Section 2]{Kock-Reyes}).
A convenient vector space is a topological vector space $X$
such that a linear functional $\phi \colon X \to \F$ is
continuous if and only if it is smooth with respect to the
canonical diffeology of $X$.
With respect to the strong topology, $\D(U)$ is a convenient
vector space (cf.\ \cite[Remark 3.5]{Frohlicher-Kriegl}),
hence its topological dual $\D'(U)$ has the same underlying
space as the smooth dual $D'(U)$ of the smooth vector space
$D(U)$ of test functions on $U$ equipped with canonical
diffeology (cf.\ \cite[Definition 4.1]{Giordano-Wu}).

The $D$-topology of $D(U)$ is finer than the locally convex
topology of $\D(U)$ by \cite[Corollary 4.12]{Giordano-Wu}.
Hence we have the following.

% \medskip
\begin{lemma}
  \label{lmm:locally_convex_topology_is_finer_than_D_topology}
  The identity map $\D'(U) \to D'(U)$ is continuous with
  respect to the $D$-topology on $D'(U)$.
\end{lemma}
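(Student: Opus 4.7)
The plan is to show every $D$-open subset $W$ of $D'(U)$ is open in the locally convex topology of $\D'(U)$. I argue by contradiction: suppose $T_0 \in W$ but no locally convex neighborhood of $T_0$ lies inside $W$. Using the sequential structure of $\D'(U)$ as the strong dual of the barreled LF-space $\D(U)$, this yields a strongly convergent sequence $T_n \to T_0$ with $T_n \notin W$ for every $n$.

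The heart of the proof is to encode this sequence by a genuine plot of $D'(U)$. By Banach--Steinhaus (applicable since $\D(U)$ is barreled), the set $\{T_n\}$ is equicontinuous, and Mackey's lemma yields scalars $\lambda_n \to \infty$ with $\{\lambda_n(T_n-T_0)\}$ still bounded in $\D'(U)$. A standard Boman-type smooth interpolation construction (see Kriegl--Michor) then produces a smooth curve $c \colon \R \to \D'(U)$ with $c(0)=T_0$ and $c(1/n)=T_n$. Next one verifies that $c$ is a plot of the smooth dual $D'(U)$: scalar smoothness $t \mapsto \langle c(t),u\rangle$ for each $u \in D(U)$ is immediate from smoothness of $c$, and the convenient-vector-space structure of $\D(U)$ (hence of its dual) promotes this to the joint smoothness under plots of $D(U)$ that is required of a plot of $D'(U)$.

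Once $c$ is a plot, $D$-openness of $W$ implies $c^{-1}(W)$ is open in $\R$ and contains $0$, hence contains some $(-\epsilon,\epsilon)$; but then $T_n = c(1/n) \in W$ for all sufficiently large $n$, contradicting the choice of the sequence. Thus every $D$-open set is locally convex open, which is the claim. The main obstacle is verifying that the interpolating curve is a plot in the smooth-dual diffeology on $D'(U)$; this requires convenient-calculus tools (Boman's theorem and its refinements), and it is at this point that the cited Giordano--Wu result enters implicitly, reconciling the diffeological and locally convex notions of smoothness on $\D(U)$. A secondary subtlety is the sequential reduction in the first step, valid because LC-closures in $\D'(U)$ are determined by sequences for the strong dual of such an LF-space.
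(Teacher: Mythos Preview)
The paper does not supply a self-contained proof here; it simply invokes \cite[Corollary~4.12(i)]{Giordano-Wu}, so any argument you give is necessarily more explicit than what the paper offers. Your strategy---produce a smooth curve through a strongly convergent sequence via the special curve lemma and then use that the curve is a plot of $D'(U)$ to reach a contradiction---is the natural convenient-calculus approach, and the interpolation and plot-verification steps are essentially sound: since $\D(U)$ is reflexive and evaluation $\D'(U)\times\D(U)\to\F$ is bounded bilinear (hence smooth in the convenient sense), scalar smoothness of $c$ does upgrade to the joint condition required of a plot of $D'(U)$.

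The weak point is the sequential reduction. You need: if $W$ is not a strong neighbourhood of $T_{0}$, then some \emph{sequence} $T_{n}\to T_{0}$ lies in the complement of $W$. This is the Fr\'echet--Urysohn property for $\D'(U)$ with the strong topology, and your justification (``LC-closures in $\D'(U)$ are determined by sequences for the strong dual of such an LF-space'') is an assertion, not an argument. The strong dual of a strict (LF)-space is not metrizable and is not automatically Fr\'echet--Urysohn; whether this holds for $\D'(U)$ is a delicate point that would itself require proof or a precise citation. Without it the special curve lemma never gets off the ground, so as written the argument has a genuine gap at exactly the place you label a ``secondary subtlety.''
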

% \medskip

Consequently, Theorem~\ref{thm:continuous_inclusion_of_Dist}
follows from its smooth version below.  (Cf.\
Proposition~\ref{prop:contunuous_extension_of_function_spaces}.)

% \medskip
\begin{theorem}
  \label{thm:smooth_inclusion_of_Dist}
  There is a smooth injection of differential vector spaces
  \[
    I_U \colon D'(U) \to \rCinf(\ras{U},\rF)
  \]
  which extends the inclusion
  $\Cinf(U,\F) \to \rCinf(\ras{U},\rF)$.
\end{theorem}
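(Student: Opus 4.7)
The plan is to adapt the standard Colombeau embedding $\D'(U) \hookrightarrow \Gs(U)$ described in \cite{GKOS} by replacing Colombeau's generalized numbers with the quasi-asymptotic ones. Fix once and for all a Schwartz mollifier $\phi \in \mathcal{S}(\R^k)$ satisfying $\int \phi(x)\,dx = 1$ and $\int x^{\alpha}\phi(x)\,dx = 0$ for every multi-index $\alpha$ with $\abs{\alpha} \geq 1$, and set $\phi_n(x) = n^{k}\phi(nx)$. Since the distinguished infinitesimal is $\rho = [1/n]$, the net $(\phi_n)$ functions as a $\rho$-scaled delta net.

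I would first handle $T \in \mathcal{E}'(U)$. The convolution $T * \phi_n \in \Cinf(\R^k,\F)$ is globally smooth, and the standard estimates arising from the structure theorem for compactly supported distributions give, for each compact $K \Subset U$ and each $\alpha \in \N^k$, a polynomial bound $\sup_{x \in K} \abs{D^{\alpha}(T * \phi_n)(x)} \leq C_{K,\alpha}\, n^{N_{K,\alpha}}$ for some $C_{K,\alpha} > 0$ and $N_{K,\alpha} \in \N$. Combined with the analogous bound at nonstandard sample points and the mean value theorem of Proposition~\ref{prp:IVT_and_MVT}, this places the diagonal extension of the net $(T * \phi_n)$ into $\rM(\Cinf(\ras{U},\nF))$, and one defines $I_U(T)$ to be its class in $\rCinf(\ras{U},\rF)$.

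To pass to arbitrary $T \in D'(U)$, choose a smooth locally finite partition of unity $\{\chi_i\}$ on $U$ with $\supp \chi_i \Subset U$, so that $\chi_i T \in \mathcal{E}'(U)$, and set $I_U(T) = \sum_i I_U(\chi_i T)$. The sum is locally finite on every $\ras{V}$ with $V \Subset U$, and independence of the choice of $\{\chi_i\}$ reduces to the sheaf-theoretic claim that if $T$ vanishes on $V \subset U$ then $T * \phi_n$ is $\rN$-negligible on $\ras{V}$: indeed, for large $n$ the convolution integral at $x \in V' \Subset V$ sees only the zero restriction of $T$. The extension property for $f \in \Cinf(U,\F)$ rests on the vanishing-moment condition on $\phi$, since Taylor expansion gives $(f * \phi_n)(x) - f(x) = O(n^{-N})$ for every $N$, so the difference lies in $\rN$. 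Compatibility with $D^{\alpha}$ is immediate from $D^{\alpha}(T * \phi_n) = (D^{\alpha}T) * \phi_n$, linearity is transparent, and injectivity follows because $I_U(T) = 0$ would force $T * \phi_n \to 0$ in $\D'(V)$ for every $V \Subset U$, which fails for $T \neq 0$.

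Smoothness of $I_U$ as a morphism in $\Diff$ can be checked plot-by-plot: a plot $\sigma \colon W \to D'(U)$ with $W \in \EucOp$ is characterized by smoothness of $w \mapsto \bracket{\sigma(w)}{u}$ for every test function $u$, and the mollifier formula $I_U(\sigma(w))(x) = \bracket{\sigma(w)}{\phi_n(x-\cdot)}$ makes joint smoothness of the net-representative in $w$ and in the nonstandard variable $x = [x_n]$ manifest. The principal obstacle I expect is the sheaf-theoretic gluing: one must verify that the local representatives assemble modulo $\rN$ into a well-defined global quasi-asymptotic function, and that the construction is independent of the choices of mollifier, partition of unity, and local representatives, all while maintaining the delicate $\rho$-moderateness and negligibility estimates uniformly on arbitrary $D$-open subsets of $\ras{U}$ rather than merely on extensions of Euclidean open sets.
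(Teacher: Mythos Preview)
Your overall strategy matches the paper's: embed $E'(U)$ by convolution against the scaled mollifier, then globalize via a partition of unity, with the moderateness estimate, the Taylor argument for the restriction to $\Cinf(U,\F)$, the derivative compatibility, and the injectivity test all carried out in the same way.

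The gap is in the gluing step. Your claim that $\sum_i I_U(\chi_i T)$ is locally finite on $\ras{V}$, and the supporting assertion that ``for large $n$ the convolution integral at $x \in V' \Subset V$ sees only the zero restriction of $T$,'' both tacitly treat $\phi$ as compactly supported. It cannot be: the moment conditions $\int x^\alpha \phi = 0$ for all $\alpha \neq 0$ force $\widehat{\phi}$ to be flat at the origin with value $1$, and if $\phi$ had compact support then $\widehat{\phi}$ would be entire, hence identically $1$. Consequently $(\chi_i T) * \phi_n$ does not vanish on a compact $K$ disjoint from $\supp \chi_i$; it is only rapidly small there. Each such term is individually $\rN$-negligible on $K$, but the infinite sum of representatives $\sum_i (\chi_i T) * \phi_n(x)$ need not even converge pointwise, since the local order and seminorm constants of $T$ on the sets $\supp \chi_i$ may blow up as these approach $\partial U$, and there is no intrinsic infinite-sum operation in $\rCinf$. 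The paper (following \cite{GKOS}) repairs this by using \emph{two} families: a partition of unity $\{\eta_\lambda\}$ subordinate to a cover $\{U_\lambda\}$ with compact closures, together with cutoffs $\psi_\lambda \in D(U)$ satisfying $\psi_\lambda \equiv 1$ near $\overline{U}_\lambda$, and sets $I_U(T) = \sum_\lambda \eta_{\lambda*}\bigl(J_U(\psi_\lambda T)\bigr)$. The outer multiplication by $\tr(\eta_\lambda)$ makes the sum genuinely locally finite, while $\psi_\lambda \equiv 1$ on $\supp \eta_\lambda$ guarantees the localized pieces patch correctly. Once you insert this extra layer, your argument goes through.
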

% \medskip

To construct $I_U$, we first introduce as in \citep{GKOS} a
linear embedding $E'(U) \to \rCinf(\ras{U},\rF)$ and extend
it to $D'(U)$ by employing sheaf-theoretic construction.
Here $E'(U)$ denotes the smooth dual of $\Cinf(U,\F)$, that
is, the subspace of $\Cinf(\Cinf(U,\F),\F)$ consisting of
$\F$-linear functionals on $\Cinf(U,\F)$.  We may regard
$E'(U)$ as a smooth version of the space of compactly
supported distributions.

% \medskip
\begin{lemma}
  \label{lmm:compactly_supported_distributions}
  For any $U \in \EucOp$ we have the following.
  \begin{enumerate}
  \item The linear map $E'(U) \to D'(U)$ induced by the
    inclusion $D(U) \to \Cinf(U,\F)$ is a smooth injection.
  % \item Any test function $\eta \in D(U)$ defines a smooth
  %   homomorphism $D'(U) \to E'(U)$ which takes $T$ to the
  %   product $\eta\, T$.
  \item The homomorphism $D'(U) \to E'(U)$ given by the
    multiplication $T \mapsto \eta\, T$ is smooth for any
    $\eta \in D(U)$.
  \end{enumerate}
\end{lemma}

\begin{proof}
  Following \citep{Giordano-Wu} denote by
  $D^{\mathrm{s}}(U)$ the same set as $D(U)$ but considered
  as a subspace of $\Cinf(U,\F)$.  For any $K \Subset U$ let
  \[
    D^{\mathrm{s}}_K(U) = \{ f \in D^{\mathrm{s}}(U) \mid
    \supp f \subset K \} \subset D^{\mathrm{s}}(U).
  \]
  Then we see by \cite[Theorem
  2.3]{Kock-Reyes} (see also \cite[Theorem 4.5 and Corollary
  4.11]{Giordano-Wu}) that the following hold in $\Diff$:
  \begin{equation}
    \label{eq:D_is_included_in_Ds}
    D(U) \subset D^{\mathrm{s}}(U),\ \
    D_K(U) = D^{\mathrm{s}}_K(U) \quad (V \in \EucOp)
  \end{equation}
  In fact, \cite[Theorem 2.3]{Kock-Reyes} says that a set
  map $\sigma \colon V \to D(U) \subset \Cinf(U,\F)$ is a
  plot of $D(U)$ if and only if its transpose
  $\sigma^{\vee}$ belongs to
  $\Cinf(V \times U,\F) = \Cinf(V,\Cinf(U,\F))$ and is
  locally of uniformly bounded support, implying
  \eqref{eq:D_is_included_in_Ds}.
  % $D(U)(V) \subset D^{\mathrm{s}}(U)(V)$ and
  % $D_K(U)(V) = D^{\mathrm{s}}_K(U)(V)$ hold for all
  % $V \in \EucOp$.
  %
  Thus the inclusion $D(U) \subset \Cinf(U,\F)$ is smooth,
  and hence induces a smooth linear map $E'(U) \to D'(U)$
  which is injective because $D(U)$ is dense in
  $\Cinf(U,\F)$.

  Similarly, the multiplication by $\eta$ defines a smooth
  homomorphism
  \[
    \Cinf(U,\F) \to D^{\mathrm{s}}_K(U) = D_K(U) \to D(U) \
    \ (K = \supp\eta)
  \]
  which in turn induces
  $D'(U) \to E'(U),\ T \mapsto \eta\,T$.
\end{proof}

% Note that, as in the case of functionals on
% $D(U)$, a linear functional on $\Cinf(U,\F)$ is smooth if
% and only if it is a continuous linear functional on
% $\tCinf(U)$.
% Let $E'(U)$ be the smooth dual of $\Cinf(U,\F)$;
% \[
%   E'(U) := L^{\infty}(\Cinf(U,\F),\F) \subset
%   \Cinf(\Cinf(U,\F),\F)
% \]
% %
% To construct $I_U$, we first introduce as in
% \cite[1.2.2]{GKOS} a linear embedding
% $E'(U) \to \rCinf(\ras{U},\rF)$ and extend it to $D'(U)$ by
% employing sheaf-theoretic construction.

%%%%%%%%%%
For any $T \in E'(U)$ and $\varphi \in \Cinf(\R^k,\F)$
denote by $T * \varphi$ the function on $U$ given by the
formula
\[
  (T * \varphi)(x) = \bracket{\widetilde{T}}{\varphi_x}
  \quad (x \in U)
\]
where $\widetilde{T} \in E'(\R^k)$ is the evident extension
of $T$ and $\varphi_x(t) = \varphi(x-t)$ for any
$t \in \R^k$.

% \medskip
\begin{lemma}
  \label{lmm:modified_convolution}
  We have the following:
  \begin{enumerate}
  \item The correspondence $(T,\varphi) \mapsto T * \varphi$
    induces a smooth map
    $E'(U) \times \Cinf(\R^k,\F) \to \Cinf(U,\F)$.
  \item We have
    $D^{\alpha}(T * \varphi) = D^{\alpha}T * \varphi$ for
    any $\alpha \in \N^k$.
  \end{enumerate}
\end{lemma}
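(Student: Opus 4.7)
The plan is to deduce part (1) from cartesian closedness of $\Diff$ together with the smoothness of the evaluation pairing, and part (2) by verifying that both $D^{\alpha}(T*\varphi)$ and $D^{\alpha}T*\varphi$ coincide with $T*D^{\alpha}\varphi$.

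For (1), I first invoke Theorem~\ref{thm:category_of_diffeological_spaces}: smoothness of the proposed map $E'(U) \times \Cinf(\R^{k},\F) \to \Cinf(U,\F)$ is equivalent, under the exponential adjunction, to smoothness of its adjoint
\[
  E'(U) \times \Cinf(\R^{k},\F) \times U \to \F, \quad (T,\varphi,x) \mapsto \bracket{\widetilde T}{\varphi_{x}}.
\]
Since $T$ is supported in $U$, $\bracket{\widetilde T}{\varphi_{x}} = \bracket{T}{(\varphi_{x})|_{U}}$, so this adjoint factors as $(T,\varphi,x) \mapsto (T,(\varphi_{x})|_{U}) \mapsto \bracket{T}{(\varphi_{x})|_{U}}$. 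The second arrow is smooth because $E'(U) \subset \Cinf(\Cinf(U,\F),\F)$ carries the functional diffeology, for which the evaluation pairing is smooth by definition. The first arrow is smooth by another application of cartesian closedness: its own adjoint $(\varphi,x,t) \mapsto \varphi(x-t)$ is the composition of subtraction $U \times U \to \R^{k}$ with the smooth evaluation $\Cinf(\R^{k},\F) \times \R^{k} \to \F$.

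For (2), I would verify that both sides equal $T*D^{\alpha}\varphi$. On the one hand, because the pairing established in (1) is smooth bilinear, the parameter derivative commutes with the bracket, and the chain rule yields $D^{\alpha}_{x}\varphi_{x}(t) = (D^{\alpha}\varphi)(x-t) = (D^{\alpha}\varphi)_{x}(t)$, hence
\[
  D^{\alpha}(T*\varphi)(x) = \bracket{\widetilde T}{D^{\alpha}_{x}\varphi_{x}} = \bracket{\widetilde T}{(D^{\alpha}\varphi)_{x}} = (T*D^{\alpha}\varphi)(x).
\]
On the other hand, the defining identity $\bracket{D^{\alpha}T}{u} = (-1)^{|\alpha|}\bracket{T}{D^{\alpha}u}$ combined with the chain rule $D^{\alpha}_{t}\varphi_{x}(t) = (-1)^{|\alpha|}(D^{\alpha}\varphi)_{x}(t)$ produces two sign factors that cancel, giving $(D^{\alpha}T*\varphi)(x) = \bracket{\widetilde T}{(D^{\alpha}\varphi)_{x}} = (T*D^{\alpha}\varphi)(x)$ as well.

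The main obstacle is the rigorous justification of differentiating under the bracket in (2), which requires the evaluation pairing $E'(U) \times \Cinf(U,\F) \to \F$ to be smooth bilinear rather than merely continuous; this is precisely the reinterpretation of $E'(U)$ as a space of smooth functionals already exploited in (1). Once that is granted, both parts reduce to elementary manipulations with translation and signs.
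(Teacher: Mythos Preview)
Your argument is correct and follows essentially the same route as the paper. For part (1) the paper likewise reduces, via cartesian closedness, to the smoothness of the translation map $(\varphi,x)\mapsto\varphi_{x}$ and of the evaluation pairing coming from the functional diffeology on $E'(U)$; the only cosmetic difference is that the paper keeps $\varphi_{x}\in\Cinf(\R^{k},\F)$ and pairs with $\widetilde{T}$, whereas you restrict to $U$ and pair with $T$. For part (2) the paper simply says ``clear by the definition''; your computation through $T*D^{\alpha}\varphi$ is exactly the unpacking of that phrase.
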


\begin{proof} To prove (1), it suffices to see that the
  following operations are smooth.
  \begin{enumerate}[topsep=4pt]
  \item[(i)] $\Cinf(\R^k,\F) \times U \to \Cinf(\R^k,\F)$, \
    $(\varphi,x) \mapsto \varphi_x$, and
  \item[(ii)] $E'(U) \times \Cinf(\R^k,\F) \to \F$, \
    $(T,\varphi) \mapsto \bracket{\widetilde{T}}{\varphi}$.
  \end{enumerate}
  It is evident that (i) is smooth.  The smoothness of (ii)
  is a consequence of the facts that the inclusion
  $E'(U) \to E'(\R^k)$ is smooth and that $E'(\R^k)$ is a
  smooth dual of $\Cinf(\R^k,\F)$.
  That (2) holds is clear by the definition.
\end{proof}

Now, let $\varrho \in \mathcal{S}(\R^k)$ be a smooth
function that has rapidly decreasing partial derivatives and
satisfies
% \begin{enumerate}
% \item[(i)] $\dint \varrho(x)\,dx = 1$
% \item[(ii)]
%   $\dint x^{\alpha} \varrho(x)\,dx = 0\ \text{for} \ \alpha
%   \in \N^k\setminus \{0\}$.
% \end{enumerate}
\begin{equation}
  \label{eq:mollifier_function}
  \text{(i)}\ \int \varrho(x)\,dx = 1, \qquad %
  \text{(ii)}\ \dint x^{\alpha} \varrho(x)\,dx = 0\
  \text{for} \ \alpha \in \N^k\setminus \{0\}
\end{equation}
and define $j_U \colon E'(U) \to \Cinf(U^{\N},\F^{\N})$ by
\[
  j_U(T)((x_n)) = ((T * \varrho_n)(x_n)) \quad (\, T \in
  E'(U),\ (x_n) \in U^{\N}\,)
\]
where $\varrho_n(x) = n^k\varrho(nx)$ for $x \in \R^k$ and
$n \in \N$.  Observe that $\varrho_n \in \mathcal{S}(\R^k)$
enjoys the properties similar to
\eqref{eq:mollifier_function}.
Also, let $\rM(U^{\N}) = U^{\N} \cap \rM(\R^{\N})^k$, so
that $\ras{U}$ is the image of $\rM(U^{\N})$ under the
projection $\rM(\R^{\N})^k \to \rR^k$.

% \medskip
\begin{lemma}
  \label{lmm:inclusion_of_Dist_into_product}
  The following hold for every $T \in E'(U)$.
  \begin{enumerate}
  \item
    $j_U(D^{\alpha}T)((x_n)) = D^{\alpha}(j_U(T))((x_n))$
    for any $\alpha \in \N^k$ and $(x_n) \in U^{\N}$.
  \item $j_U(T)((x_n)) \in \rM(\F^{\N})$ for any
    $(x_n) \in \rM(U^{\N})$.
  \item If $(x_n),\, (y_n) \in \rM(U^{\N})$ and
    $(x_n) - (y_n) \in \rN(\R^{\N})^k$ then
    $j_U(T)((x_n)) - j_U(T)((y_n)) \in \rN(\F^{\N})$.
  \item If $j_U(T)((x_n)) \in \rN(\F^{\N})$ holds for all
    $(x_n) \in \rM(U^{\N})$ then $T = 0$.
  \item If $T = T_f$ for some $f \in D(U)$ then we have
    $j_U(T)((x_n)) - (f(x_n)) \in \rN(\F^{\N})$ for any
    $(x_n) \in U^{\N}$.
  \end{enumerate}
\end{lemma}

\begin{proof}
  (1)
  $j_U(D^{\alpha}T)((x_n)) = ((D^{\alpha}T *
  \varrho_n)(x_n)) = (D^{\alpha}(T * \varrho_n)(x_n)) =
  D^{\alpha}(j_U(T))((x_n))$ by
  Lemma~\ref{lmm:modified_convolution} (2).

  (2) Every $T \in E'(U)$ is of the form
  $\sum_{\abs{\alpha} \leq r} D^{\alpha}f_{\alpha}$, where
  $f_{\alpha}$ is a continuous function (regarded as a
  distribution) such that $\supp f_{\alpha}$ is contained in
  an arbitrary neighborhood of $\supp T$ (see
  \cite[1.2.9]{GKOS}).  Thus we need only consider the case
  $T = D^{\alpha}f$ with
  $f \in \mathcal{C}^0_c(U) \subset \tCinf(\R^k)$.  But
  then, we have
  \begin{align*}
    (T * \varrho_n)(x_n) = \int f(x_n-t)
    D^{\alpha}\varrho_n(t)\,dt
    &= \int f(x_n-t) n^{\abs{\alpha}+k}
      D^{\alpha}\varrho(nt)\,dt
    \\
    &= n^{\abs{\alpha}} \int f(x_n-s/n)
      D^{\alpha}\varrho(s)\,ds.
  \end{align*}
  Hence for each $n \geq 2$ there exists $c_n \in \N$ such
  that $\abs{(T * \varrho_n)(x_n)} \leq n^{c_n}$, implying
  $((T * \varrho_n)(x_n)) \in \rM(\F^{\N})$.
  % for all $(x_n) \in U^{\N}$.

  (3) By the local Lipshitz continuity of $T * \varrho_n$
  there exists $K_n > 0$ for each $n \in \N$ that satisfies
  \[
    \abs{(T * \varrho_n)(x_n) - (T * \varrho_n)(y_n)} \leq
    K_n\abs{x_n - y_n}.
  \]
  Here we may assume $(K_n) \in \rM(\R^{\N})$ because
  $(T * \varrho_n)$ takes values in $\rM(\F^{\N})$.  It
  follows, in particular, that if
  $(\abs{x_n-y_n}) \in \rN(\R^{\N})$ then
  $(\abs{(T * \varrho_n)(x_n) - (T * \varrho_n)(y_n)}) \in
  \rN(\F^{\N})$, and hence
  \[
    j_U(T)((x_n)) - j_U(T)((y_n)) = ((T * \varrho_n)(x_n) -
    (T * \varrho_n)(y_n)) \in \rN(\F^{\N}).
  \]

  (4) For any $\varphi \in \Cinf(U,\F)$ there holds
  $\bracket{T * \varrho_n}{\varphi} \to
  \bracket{T}{\varphi}$ since $T * \varrho_n \to T$ in
  $E'(U)$.  On the other hand, we have
  $\bracket{T * \varrho_n}{\varphi} \to 0$ because the
  condition $((T * \varrho_n)(x)) \in \rN(\F^{\N})$ implies
  $T * \varrho_n \to 0$ uniformly on $\supp T$.  Hence we
  have $\bracket{T}{\varphi} = 0$ for any
  $\varphi \in \Cinf(U,\F)$, implying $T = 0$.

  (5) Let $j_U(T)((x_n)) - (f(x_n)) = (\Delta_n(x_n))$,
  so that we have
  \[
    \Delta_n(x_n) = (T * \varrho_n)(x_n) - f(x_n) = \int
    (f(x_n-t) - f(x_n))\,\varrho_n(t)\,dt.
  \]
  By the Taylor expansion formula applied to $f$ we can
  write $f(x_n-t) - f(x_n) = P(d,t) + R(d,t)$, where
  \[
    % P(d,t) = \sum_{1 \leq \abs{\alpha} < d}
    % \frac{D^{\alpha}f(x_n)}{\alpha\,!}(-t)^{\alpha},\ \
    % R(d,t) = \sum_{\abs{\beta} = d}
    % \frac{D^{\beta}f(x_n-\theta t)}{\beta\,!}\,
    % (-t)^{\beta}\ \ (0 < \theta < 1).
    P(d,t) = \sum_{1 \leq \abs{\alpha} < d}
    \frac{D^{\alpha}f(x_n)}{\alpha\,!}(-t)^{\alpha}\ \
    \text{and} \ \
    R(d,t) = \sum_{\abs{\beta} = d}
    \frac{D^{\beta}f(x_n-\theta t)}{\beta\,!}\,
    (-t)^{\beta} % \ \ (0 < \theta < 1).
  \]
  with $\epsilon \in (0,1)$.
  But we have $\dint P(d,t)\,\varrho_n(t)\,dt = 0$ because
  $\dint t^{\alpha}\varrho_n(t)\,dt = 0$ for
  $\alpha \neq 0$.  Hence
  \begin{equation*}
    \begin{split}
      \Delta_n(x)
      % &= \int (f(x-t) - f(x))\,\varrho_n(t)\,dt
      % \\
      &= \int (P(d,t) + R(d,t))\,\varrho_n(t)\,dt
      \\
      &= \int R(d,t)\,n^k\varrho(nt)\,dt
      \\
      &= \int R(d,s/n)\,\varrho(s)\,ds
      \\
      &= \sum_{\abs{\beta} = d} \int
        \dfrac{D^{\beta}f(x_n-\theta s/n)}{\beta\,!}\,
        (-s/n)^{\beta}\,\varrho(s)\,ds
      \\
      &= n^{-d} \sum_{\abs{\beta} = d} \int
        \dfrac{D^{\beta}f(x_n-\theta s/n)}{\beta\,!}\,
        (-s)^{\beta}\,\varrho(s)\,ds.
    \end{split}
  \end{equation*}
  Since $f$ is compactly supported and
  $\varrho \in \mathcal{S}(\R^k)$, we can attain
  $\abs{\Delta_n(x_n)} \leq n^{-d_n}$ for any $d_n \in \N$
  ($n \geq 2$) by taking $d$ sufficiently large.
  Consequently, we have
  $j_U(f)((x_n)) - (f(x_n)) = (\Delta_n(x_n)) \in
  \rN(\F^{\N})$.
\end{proof}

As an immediate consequence of the lemma above we have the
following.

% \medskip
\begin{proposition}
  \label{prp:embedding_of_compactly_supported_distributions}
  There is an smooth injection of differential vector
  spaces\/
  \[
    J_U \colon E'(U) \to \rCinf(\ras{U},\rF)
  \]
  such that
  $J_U|D(U) = i_U|D(U) \colon D(U) \to \rCinf(\ras{U},\rF)$
  holds.
\end{proposition}

\begin{proof}
  By (2) and (3) of
  Lemma~\ref{lmm:inclusion_of_Dist_into_product}, we can
  assign to any $T \in E'(U)$ a well-defined function
  $J_U(T) \colon \ras{U} \to \rF$ given by the formula:
  \[
    J_U(T)([x_n]) = [j_U(T)(x_n)] \in \rF \quad (\,(x_n) \in
    \rM(U^{\N})\,).
  \]
  This formula together with the smoothness of $j_U(T)$
  implies $J_U(T) \in \rCinf(\ras{U}.\rF)$, and we obtain a
  map $J_U \colon E'(U) \to \rCinf(\ras{U},\rF)$.
  That $J_U$ is smooth is proved by arguing as in the proof
  of
  Proposition~\ref{prop:contunuous_extension_of_function_spaces}.
  Moreover, (1) and (4) of
  Lemma~\ref{lmm:inclusion_of_Dist_into_product} show that
  $J_U$ is an injection of differential vector spaces.
  Finally, Lemma~\ref{lmm:inclusion_of_Dist_into_product}
  (5) implies that $J_U$ coincides with $i_U$ on $D(U)$.
\end{proof}

In order to extend $J_U$ to an injection
$D'(U) \to \rCinf(\ras{U},\rF)$ we need the proposition
below.  Denote by $\mathcal{A}$ the sheaf of rings
$U \mapsto \Cinf(U,\F)$.

% \medskip
\begin{proposition}
  \label{prp:fineness_of_rCinf}
  The sheaf of $\mathcal{A}$-modules
  $U \mapsto \rCinf(\ras{U},\rF)$ is fine, i.e.\ admits
  a partition of unity.
\end{proposition}

\begin{proof}
  Let $\mathfrak{U} = \{U_{\lambda}\}$ be an open covering
  and $\{\chi_j\}_{j \in \N}$ be a smooth partition of unity
  subordinate to $\mathfrak{U}$.  Assign to each $j \in \N$
  a sheaf morphism
  $\chi_{j\,*} \colon \rCinf(\ras(-),\rF) \to
  \rCinf(\ras(-),\rF)$ defined by
  \[
    % \chi_{j\,*}(f) = (\tr(\chi_j)|\ras{V})f \quad (\,f \in
    % \rCinf(\ras{V},\rF)\,)
    \chi_{j\,*}(f) = \ras{\chi_j}f\,|\,\ras{U} \cap
    \ras{W}_j \quad (\,f \in \rCinf(\ras{U},\rF)\,)
  \]
  where $W_j \subset \R^k$ is an open subset satisfying
  $\supp\chi_j \subset W_j \subset \overline{W_j} \subset
  U_{\lambda(j)} \in \mathfrak{U}$.
  Then we can show that $\{\chi_{j\,*}\}$ provides a
  partition of unity, that is, satisfies
  $\sum_{j \in \N} \chi_{j\,*} = 1$ and
  $\supp \chi_{j\,*} \subset U_{\lambda(j)}$ for every
  $j \in \N$ as we have
  $\supp{(\ras{\chi_j})} = \ras{\supp{\chi_j}}$.
\end{proof}

Let $\mathfrak{U} = \{U_{\lambda}\}$ be an open covering of
$U$ such that $\overline{U}_{\!\lambda}$ is compact for
every $\lambda$ and $\{\psi_{\lambda}\}$ be a family of
elements of $D(U)$ such that $\psi_{\lambda} \equiv 1$ on a
neighborhood of\, $\overline{U}_{\!\lambda}$.
% Then we have the following.

% \medskip
\begin{lemma}
  \label{lmm:coherence_of_J_U}
  The following hold.
  \begin{enumerate}
  \item If $T \in E'(U)$ then
    $\supp J_U(T) = \ras{\supp T} \subset \ras{U}$.
  \item % We have
    $J_U(\psi_{\lambda}T)|\ras{U}_{\lambda} \cap
    \ras{U}_{\mu} = J_U(\psi_{\mu}T)|\ras{U}_{\lambda} \cap
    \ras{U}_{\mu}$ for any $T \in D'(U)$ and
    $\lambda,\,\mu \in \Lambda$.
  \end{enumerate}
\end{lemma}

\begin{proof}
  (1) To prove the inclusion
  $\supp J_U(T) \subset \ras{\supp T}$, it suffices to show
  that $x \in \ras{U} \setminus \ras{\supp T}$ implies
  $J_U(T)(x) = 0$.
  Suppose $x$ is represented by a net
  $(x_n) \in \rM(U^{\N})$ such that $x_n \not\in \supp T$
  for every $n \in \N$.  Then, by arguing as in
  \citep[Proposition 1.2.12]{GKOS} we can show that
  $\abs{(T * \varrho_n)(x_n)} \leq 1/n^{d_n}$ holds for any
  $d_n \in \N$.  Thus we have
  $j_U(T)((x_n)) \in \rN(\F^{\N})$, implying
  $J_U(T)(x) = 0$.  The converse inclusion
  $\supp J_U(T) \supset \ras{\supp T}$ is proved again by
  using the argument of \citep[ Proposition 1.2.12]{GKOS}.

  (2) This is clear from the fact that
  $\psi_{\lambda} \equiv \psi_{\lambda} \equiv 1$ on
  $U_{\lambda} \cap U_{\mu}$.
\end{proof}

Proposition~\ref{prp:fineness_of_rCinf} and
Lemma~\ref{lmm:coherence_of_J_U} enable us to extend
$J_U \colon E'(U) \to \rCinf(\ras{U},\rF)$ to a linear map
$I_U \colon D'(U) \to \rCinf(\ras{U},\rF)$ by putting
\[
  I_U(T) = \tsum_{j \in \N}\,
  \chi_{j\,*}(J_U(\psi_{\lambda(j)}\,T)) \quad %
  (\,T \in D'(U)\,)
\]
where $\{\chi_j\}$ is a smooth partition of unity
subordinate to the open cover $\mathfrak{U}$ of $U$.

Denote by $\rCinf(\ras{U},\rF)_0$ the differential
subalgebra of $\rCinf(\ras{U},\rF)$ consisting of those
$f \colon \ras{U} \to \rF$ represented by a net of smooth
functions $(f_n) \in \Cinf(U,\F)^{\N}$, so that we have
$f(x) = [f_n(x_n)]$ for every $x = [x_n] \in \ras{U}$.  By
the construction, $I_U$ factors as a composition
\[
  D'(U) \to \rCinf(\ras{U},\rF)_0 \xrightarrow{\subset}
  \rCinf(\ras{U},\rF)
\]
and there is a bilinear pairing
$\rCinf(\ras{U},\rF)_0 \times D(U) \to \rF$ which takes
$([f_n],\varphi)$ to $[\bracket{f_n}{\varphi}]$, where
\[
  \bracket{f_n}{\varphi} = \int_U f_n(x)\varphi(x)\,dx \quad
  (n \in \N).
\]
We show that this pairing is compatible with the duality
pairing $D'(U) \times D(U) \to \F$ under $I_U$.

% \medskip
\begin{lemma}
  \label{lmm:preservation of pairings}
  We have $\bracket{I_U(T)}{\varphi} =
  \bracket{T}{\varphi}$ for every $T \in D'(U)$ and $\varphi
  \in D(U)$.
\end{lemma}

\begin{proof}
  Let $\varphi \in D(U)$ and
  $M = \max\{j \in \N \mid \supp \varphi \cap \supp \chi_j
  \neq \emptyset\}$.  Then we have
  $\sum_{j \leq M} \chi_j = 1$ on $\supp \varphi$, and hence
  \[
    \bracket{I_U(T)}{\varphi} = \underset{j \in \N}{\tsum}\,
    \bracket{\chi_{j\,*}(J_U(\psi_{\lambda(j)}\,T))}{\varphi}
    = [\bracket{T}{\underset{j \leq M}{\tsum}\,
      \chi_j\psi_{\lambda(j)}\,\varphi}] =
    [\bracket{T}{\varphi}] = \bracket{T}{\varphi}
  \]
  holds for every $T \in D'(U)$.
\end{proof}

To complete the proof of
Theorem~\ref{thm:smooth_inclusion_of_Dist} it suffices to
verify the following.

% \medskip
\begin{proposition}
  \label{prp:properties_of_I_U}
  Let $U$ be a open subset of $\R^k$.  Then we have the
  following.
  \begin{enumerate}
  \item $I_U \colon D'(U) \to \rCinf(\ras{U},\rF)$ does not
    depend on the choice of $\{U_{\lambda}\}$,
    $\{\psi_{\lambda}\}$ and $\{\chi_j\}$.
  \item $I_U \colon D'(U) \to \rCinf(\ras{U},\rF)$ is a
    smooth injection of differential vector spaces.
  \item
    $I_U|E'(U) = J_U \colon E'(U) \to \rCinf(\ras{U},\rF)$.
  \item
    $I_U|\Cinf(U,\F) = i_U \colon \Cinf(U,\F) \to
    \rCinf(\ras{U},\rF)$.
  \end{enumerate}
\end{proposition}

\begin{proof}
  (1) This follows from the fineness of the sheaf
  $U \mapsto \rCinf(\ras{U},\rF)$ and Lemma
  \ref{lmm:coherence_of_J_U}.  (See \citep[Proposition
  1.2.18]{GKOS}.)

  (2) It is evident that $I_U$ is a linear map of smooth
  vector spaces.  Its injectivity follows from
  Lemma~\ref{lmm:preservation of pairings} because
  $I_U(T) = 0$ implies that $\bracket{T}{\varphi} = 0$ holds
  for every $\varphi \in D(U)$, meaning $T = 0$.  That $I_U$
  commutes with differential operators can be proved as in
  \citep[Proposition 1.2.17]{GKOS} by using the fact that
  $\psi_{\lambda(j)} = 1$ on a neighborhood of the closure
  of $U_{\lambda(j)}$.

  (3) Let $T \in E'(U)$.  To verify $I_U(T) = J_U(T)$, it
  suffices to show that $J_U(T) - I_U(T) = 0$ holds on each
  $\ras{U}_{\lambda}$.  Let
  $F = (1-\psi_{\lambda})T \in E'(U)$.  Then for any
  $x = [x_n] \in \ras{U}_{\lambda}$ we have
  \[
    J_U(T)(x) - I_U(T)(x) = [(F * \varrho_n)(x_n)] =
    J_U(F)(x) = 0
  \]
  % because
  % $\ras{U}_{\lambda} \subset (\ras{\supp F})^{\,c} = \supp
  % J_U(F)^{\,c}$ holds by the definition of $F$ and
  % Lemma~\ref{lmm:coherence_of_J_U} (1).
  because
  $\supp J_U(F) = \ras{\supp F} \subset \ras{(U \setminus
    \overline{U}_{\lambda})}$ by the definition of $F$ and
  Lemma~\ref{lmm:coherence_of_J_U} (1).

  (4) Let $f \in \Cinf(U,\F)$.  Then for any
  $\lambda \in \Lambda$ and
  $x = [x_n] \in \ras{U}_{\lambda}$ we have
  \[
    I_U(f)(x) = I_U(\psi_{\lambda}f)(x) =
    J_U(\psi_{\lambda}f)(x) = i_U(\psi_{\lambda}f)(x) =
    i_U(f)(x)
  \]
  by (3) above and
  Proposition~\ref{prp:embedding_of_compactly_supported_distributions}.
  Thus $I_U|\Cinf(U,\F) = i_U$ holds.
\end{proof}

\begin{remark}
  Let $\EucOp_0$ denote the sub-site of $\EucOp$ having the
  same objects as $\EucOp$ but only smooth injections as its
  morphisms.  Then both $U \mapsto D'(U)$ and
  $U \mapsto \rCinf(\ras{U},\rF)$ are sheaves on $\EucOp_0$
  and the injections
  $I_U \colon D'(U) \to \rCinf(\ras{U},\rF)$ determine a
  sheaf morphism that restricts to the identity on the
  common subsheaf $U \mapsto \Cinf(U,\F)$.  (Compare
  \cite[1.2.20]{GKOS}.)
\end{remark}

%%%%%%%%%%%%%%%%%%%%%%%%%%%%%%%%%%%%%%%%%%%%%%%%%%%%%%%%%%%%
% Section 6
%%%%%%%%%%%%%%%%%%%%%%%%%%%%%%%%%%%%%%%%%%%%%%%%%%%%%%%%%%%%
\section{The relation with Colombeau's algebra}
We first recall briefly the definition of the special
Colombeau algebra of generalized functions and its smooth
version introduced by \citep{Giordano-Wu}.
% Henceforth, we denote $\Cinf(U) = \Cinf(U,\F)$ and
% similarly for $\tCinf$ unless it is necessary to specify
% $\F$.

\subsection{Special Colombeau algebra}
Let $J = (0,1]$ and consider the space
$\tCinf(U)^{J} = \{ (f_{\epsilon}) \mid \epsilon \in J \}$
of $J$-nets in the locally convex space of $\F$-valued
smooth functions on an open subset $U$ of $\R^k$.  Then the
special Colombeau algebra on $U$ is defined to be the
quotient
\[
  \Gs(U) = \sM(\tCinf(U)^J)/\sN(\tCinf(U)^J)
\]
of the subalgebra of ``moderate nets'' over that of
``negligible nets.''  More explicitly,
\begin{align*}
  % \sM(\tCinf(U)^J)
  % &= \{ (f_{\epsilon}) \in \tCinf(U)^J \mid %
  %   \forall \alpha \in \N^k,\ \forall K \Subset U,\ %
  %   \exists c \in \N,\ \norm{D^{\alpha}f_{\epsilon}(x)}_K =
  %   O(\epsilon^{-c}) \},
  % \\
  % \sN(\tCinf(U)^J)
  % &= \{ (f_{\epsilon}) \in \tCinf(U)^J \mid %
  %   \forall \alpha \in \N^k,\ \forall K \Subset U,\ %
  %   \forall d \in \N,\ \norm{D^{\alpha}f_{\epsilon}(x)}_K =
  %   O(\epsilon^d) \}
  \sM(\tCinf(U)^J)
  &= \{ (f_{\epsilon}) \mid %
    \forall \alpha \in \N^k,\ \forall K \Subset U,\ %
    \exists c \in \N,\ \norm{D^{\alpha}f_{\epsilon}(x)}_K =
    O(\epsilon^{-c}) \},
  \\
  \sN(\tCinf(U)^J)
  &= \{ (f_{\epsilon}) \mid %
    \forall \alpha \in \N^k,\ \forall K \Subset U,\ %
    \forall d \in \N,\ \norm{D^{\alpha}f_{\epsilon}(x)}_K =
    O(\epsilon^d) \}
\end{align*}
where $K \Subset U$ means that $K$ is compact in $U$ and
$\norm{D^{\alpha}f_{\epsilon}(x)}_K := \max_{x \in K}
\abs{D^{\alpha}f_{\epsilon}(x)}$.

As described in \cite[Theorem 1.1]{Giordano-Wu},
the correspondence
$U \mapsto \Gs(U)$ is a fine and supple sheaf
% of differential algebras defined
on $\EucOp_0$ (cf.\ Remark at the end of Section~5), and
there is a sheaf embedding of differential vector spaces
$\iota_U \colon \D'(U) \to \Gs(U)$ that restricts to the
inclusion $\sigma \colon \tCinf(U) \to \Gs(U)$ induced by
the diagonal inclusion $\tCinf(U) \to \tCinf(U)^J$.  Beware
that $\iota_U$ is not uniquely determined as it depends on
the choice of mollifier.

%%%%%%%%%%
\subsection{Colombeau algebra as a diffeological space}
\label{subsec:smooth Colombeau algebra}
Giordano and Wu introduced in \cite[\S{5}]{Giordano-Wu} a
smooth version of $\Gs(U)$ as a quotient
\[
  G^s(U) := \cM(\Cinf(U)^J)/\cN(\Cinf(U)^J)
\]
where $\cM(\Cinf(U)^J)$ and $\cN(\Cinf(U)^J)$ have the same
underlying sets as $\sM(\Cinf(U)^J)$ and $\sN(\Cinf(U)^J)$,
respectively, but are regarded as subspaces of the
diffeological product space $\Cinf(U)^J = \Cinf(U,\F)^J$.

% \medskip
\begin{theorem}[{\cite[Theorem 5.1]{Giordano-Wu}}]
  \label{thm:smooth Colombeau algebra}
  The space $G^s(U)$ is a smooth differential algebra that
  admits a linear embedding $i_U \colon D'(U) \to G^s(U)$
  which {\rm (i)}\! is smooth, {\rm (ii)}\! preserves
  partial derivatives, and {\rm (iii)}\! restricts to the
  inclusion $\sigma \colon \Cinf(U) \to G^s(U)$ induced by
  the diagonal inclusion $\Cinf(U) \to \Cinf(U)^J$.
\end{theorem}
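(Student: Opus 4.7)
First I would verify that $G^s(U)$ is a smooth differential algebra. The pointwise algebra operations on $\Cinf(U,\F)^J$ --- addition, multiplication and scalar multiplication --- are smooth in the product functional diffeology, because they are induced componentwise by the smooth algebra structure of $\Cinf(U,\F)$. Standard Leibniz estimates show that $\col{\bf M}(\Cinf(U,\F)^J)$ is a subalgebra and $\col{\bf N}(\Cinf(U,\F)^J)$ is an ideal inside it. The componentwise partial derivatives $\partial_i$, being smooth linear maps on $\Cinf(U,\F)$ with its functional diffeology, preserve both moderateness and negligibility. Because the quotient projection onto $G^s(U)$ is by construction a subduction, every such operation descends to a smooth operation on $G^s(U)$, and the induced derivations satisfy the Leibniz rule and commute.

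Next I would construct $i_U$ by the standard mollifier method. Fix a Schwartz mollifier $\varrho \in \mathcal{S}(\R^k)$ satisfying $\int\varrho(x)\,dx = 1$ and $\int x^\alpha\varrho(x)\,dx = 0$ for every non-zero $\alpha$, and put $\varrho_\epsilon(x) = \epsilon^{-k}\varrho(x/\epsilon)$ for $\epsilon \in J$. Following the sheaf-theoretic strategy used for $I_U$ later in Section~5, choose a locally finite open cover $\{U_\lambda\}$ of $U$ by relatively compact sets, cutoff functions $\psi_\lambda \in D(U)$ equal to $1$ on a neighborhood of $\overline{U_\lambda}$, and a subordinate smooth partition of unity $\{\eta_\lambda\}$. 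Then define
\[
  i_U(T) := \Bigl[\,\epsilon \mapsto \sum_\lambda \eta_\lambda \cdot \bigl((\psi_\lambda\,T) * \varrho_\epsilon\bigr)\big|_U\,\Bigr] \in G^s(U) \qquad (T \in D'(U)),
\]
where the bracket denotes the class modulo $\col{\bf N}(\Cinf(U,\F)^J)$; the sum is locally finite, so the enclosed net lies in $\Cinf(U,\F)^J$.

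Then I would verify the properties \emph{(i)--(iii)}. Moderateness of the representing net is proved locally on each $U_\lambda$ by writing $\psi_\lambda T$ as a finite sum of derivatives of compactly supported continuous functions and applying an estimate analogous to \eqref{eq:moderateness_of_j}. Independence modulo $\col{\bf N}$ from the choice of $\{U_\lambda\},\{\psi_\lambda\},\{\eta_\lambda\}$ and of $\varrho$ is handled by a standard argument exploiting the vanishing of the higher moments of $\varrho$. Property (iii) follows from a Taylor expansion analogous to \eqref{eq:restriction_of_j_to_D}, which shows that for $f \in \Cinf(U,\F)$ the representing net differs from the constant net $\epsilon \mapsto f$ by an element of $\col{\bf N}$; property (ii) is the identity $D^\alpha(T * \varrho_\epsilon) = (D^\alpha T) * \varrho_\epsilon$ modulo negligible cutoff errors. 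Linearity is immediate, and injectivity reduces, as in the proof of Lemma~\ref{lmm:inclusion_of_Dist_into_product}(4), to the fact that $(\psi_\lambda T) * \varrho_\epsilon \to \psi_\lambda T$ in $\D'(U)$ as $\epsilon \to 0$. For smoothness (i), Lemma~\ref{lmm:modified_convolution}(1) applied with parameters shows that any plot $\sigma \colon V \to D'(U)$ produces via pointwise convolution a plot $V \to \Cinf(U,\F)^J$ landing in $\col{\bf M}$, and composing with the quotient subduction yields the required plot of $G^s(U)$.

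The main obstacle I expect lies in this last smoothness verification. Diffeological smoothness of $\sigma \colon V \to D'(U)$ is defined through composition with smooth linear functionals on $D(U)$, whereas membership of the convolution net in $\col{\bf M}(\Cinf(U,\F)^J)$ requires joint smoothness in $(v,x) \in V \times U$ together with moderateness bounds that are locally uniform in $v$. Reconciling these two notions --- i.e.\ showing that a plot-level smooth family of distributions produces a plot-level moderate family of mollifications, with derivatives growing no faster than some negative power of $\epsilon$ uniformly on compacta of $V$ --- is the key technical point; once it is in place, the rest of the argument parallels the classical Colombeau embedding.
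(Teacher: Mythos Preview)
The paper does not supply its own proof of this theorem: it is quoted verbatim as \cite[Theorem 5.1]{Giordano-Wu} and is immediately followed by the next proposition, with no proof environment in between. So there is nothing in the paper to compare your argument against; the authors simply import the result from Giordano--Wu.

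That said, your outline is a reasonable reconstruction of how such a proof would go, and it is visibly modeled on the paper's own Section~5 argument for $I_U \colon D'(U) \to \rCinf(\ras{U},\rF)$ (the mollifier $\varrho$, the cover $\{U_\lambda\}$, the cutoffs $\psi_\lambda$, and the partition of unity $\{\eta_\lambda\}$ all appear there, and you correctly invoke the analogues of \eqref{eq:moderateness_of_j}, \eqref{eq:restriction_of_j_to_D}, and Lemma~\ref{lmm:inclusion_of_Dist_into_product}(4)). Your identification of the genuine technical issue --- passing from plot-level smoothness of $\sigma \colon V \to D'(U)$ to locally uniform moderateness of the mollified family --- is on target, and is exactly the kind of point that the cited reference has to address. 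But since the present paper offers no proof, any detailed comparison would have to be made against \cite{Giordano-Wu} rather than against this text.
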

% \medskip

The theorem below describes the relationship between
$G^s(U)$ and $\rCinf(\ras{U},\rF)$.  Denote by $\aF$ the
non-Archimedean field of asymptotic numbers (cf.\ Remark at
the end of Sections~2).

% \medskip
\begin{theorem}
  \label{thm:relation_between_G_and_rCinf}
  There is a diagram of sheaves on $\EucOp_0$
  \begin{equation}
    \label{eq:Colombeau_and_nonstandard}
    \vcenter{
      \xymatrix{%
        D'(U) \ar[r]^-{i_U} \ar[d]_-{I_U}
        \ar[rd]^-{i'_U} %
        & G^s(U) \ar[r]^-{k_U} %
        & F^a(U) %
        \\
        \rCinf(\ras{U},\rF) %
        & F^{\rho}(U) \ar[l]_-{j_U} %
        & F^a_{\rho}(U) \ar@{->>}[u]%_-{p_U}
        \ar[l]_-{\supset} %
      }  %
    }
    \qquad (U \in \EucOp_0)
  \end{equation}
  enjoying the following properties:
  \begin{enumerate}
  \item $F^a(U)$ is a smooth differential algebra over $\aF$
    that contains $\Cinf(U)$ as a differential subalgebra.
  \item $F^{\rho}(U)$ is a smooth differential algebra over
    $\rF$ that contains $\Cinf(U)$ as a differential
    subalgebra.
  \item There exists a smooth surjection from a differential
    subalgebra $F^a_{\rho}(U)$ of $F^{\rho}(U)$ onto
    $F^a(U)$.
  \item $k_U$ and $j_U$ are smooth homomorphisms of
    differential algebras restricting to the identity on
    $\Cinf(U)$.
  \item $i'_U$ is a smooth injection of differential vector
    spaces such that $I_U = j_U \circ i'_U$.
  \end{enumerate}
\end{theorem}

\begin{proof}
  We put
  \[
    F^a(U) = \aM(\Cinf(U)^{\N})/\aN(\Cinf(U)^{\N}), \quad
    F^{\rho}(U) = \rM(\Cinf(U)^{\N})/\rN(\Cinf(U)^{\N})
  \]
  for each open subset $U \subset \R^k$, where
  \begin{align*}
    % \aM(\Cinf(U)^{\N})
    % &= \{ (f_n) \in \Cinf(U)^{\N} \mid \forall \alpha \in
    %   \N^k,\ \forall K \Subset U,\ \exists c \in \N,\
    %   \norm{D^{\alpha}f_n}_{K} \leq n^{c}\ \aev\, \}
    % \\
    % \aN(\Cinf(U)^{\N})
    % &= \{ (f_n) \in \Cinf(U)^{\N} \mid \forall \alpha \in
    %   \N^k,\ \forall K \Subset U,\ \forall d \in \N,\
    %   \norm{D^{\alpha}f_n}_{K} \leq n^{-d}\ \aev\, \}
    % \\
    % \rM(\Cinf(U)^{\N})
    % &= \{ (f_n) \in \Cinf(U)^{\N} \mid \forall \alpha \in
    %   \N^k,\ \forall (K_n) \Subset U^{\N},\ \exists
    %   (c_n) \in \N^{\N},\ \norm{D^{\alpha}f_n}_{K_n} \leq
    %   n^{c_n}\ \aev\, \}
    % \\
    % \rN(\Cinf(U)^{\N})
    % &= \{ (f_n) \in \Cinf(U)^{\N} \mid \forall \alpha \in
    %   \N^k,\ \forall (K_n) \Subset U^{\N},\ \forall
    %   (d_n) \in \N^{\N},\ \norm{D^{\alpha}f_n}_{K_n} \leq
    %   n^{-d_n}\ \aev\, \}.
    \aM(\Cinf(U)^{\N})
    &= \{ (f_n) \mid \forall \alpha \in
      \N^k,\ \forall K \Subset U,\ \exists c \in \N,\
      \norm{D^{\alpha}f_n}_{K} \leq n^{c}\ \aev\, \}
    \\
    \aN(\Cinf(U)^{\N})
    &= \{ (f_n) \mid \forall \alpha \in
      \N^k,\ \forall K \Subset U,\ \forall d \in \N,\
      \norm{D^{\alpha}f_n}_{K} \leq n^{-d}\ \aev\, \}
    \\
    \rM(\Cinf(U)^{\N})
    &= \{ (f_n) \mid \forall \alpha \in
      \N^k,\ \forall (K_n) \Subset U^{\N},\ \exists
      (c_n) \in \N^{\N},\ \norm{D^{\alpha}f_n}_{K_n} \leq
      n^{c_n}\ \aev\, \}
    \\
    \rN(\Cinf(U)^{\N})
    &= \{ (f_n) \mid \forall \alpha \in
      \N^k,\ \forall (K_n) \Subset U^{\N},\ \forall
      (d_n) \in \N^{\N},\ \norm{D^{\alpha}f_n}_{K_n} \leq
      n^{-d_n}\ \aev\, \}.
  \end{align*}
  It is clear by the definition that $F^a(U)$ and
  $F^{\rho}(U)$ are smooth differential algebras over $\aF$
  and $\rF$, respectively.
  Moreover, as there is a sequence of inclusions
  \[
    \rN(\Cinf(U)^{\N}) \subset \aN(\Cinf(U)^{\N}) \subset
    \aM(\Cinf(U)^{\N}) \subset \rM(\Cinf(U)^{\N}),
  \]
  we see that $F^a(U)$ is a subquotient of $F^{\rho}(U)$ of
  the form $F^a_{\rho}(U)/K^a_{\rho}(U)$, where
  \[
    F^a_{\rho}(U) = \aM(\Cinf(U)^{\N})/\rN(\Cinf(U)^{\N}),
    \quad K^a_{\rho}(U) =
    \aN(\Cinf(U)^{\N})/\rN(\Cinf(U)^{\N}).
  \]
  % proving (3).

  The homomorphism $\Cinf(U)^J \to \Cinf(U)^{\N}$ which
  takes $(f_{\epsilon})_{\epsilon \in J}$ to
  $(f_{1/n})_{n \in \N}$ restricts to
  $\cL(\Cinf(U)^J) \to \aL(\Cinf(U)^{\N})$ for both
  $\mathbf{L} = \mathbf{M},\, \mathbf{N}$ because
  ``$= O(\epsilon^p)$'' implies ``$\leq n^{-p-1}$ a.e.''
  under the correspondence $\N \ni n \mapsto 1/n \in J$.
  Thus we obtain a homomorphisms of SDAs
  $k_U \colon G^s(U) \to F^a(U)$.
  % The homomorphism $\Cinf(U)^J \to \Cinf(U)^{\N}$ which
  % takes $(f_{\epsilon})_{\epsilon}$ to $(f_{1/n})_n$ takes
  % $\cL(\Cinf(U)^J)$ into $\aL(\Cinf(U)^{\N})$ for
  % $\mathbf{L} = \mathbf{M},\, \mathbf{N}$ because
  % ``$= O(\epsilon^p)$'' implies ``$\leq n^{-p-1}$ a.e.''
  % under the correspondence $\N \ni n \mapsto 1/n \in J$.
  % Thus we obtain a homomorphisms of SDAs
  % $k_U \colon G^s(U) \to F^a(U)$.
  %
  Notice that $k_U$ is not injective unless $U = \emptyset$,
  because $G^s(\R^0)$ is a ring with zero-divisors, hence
  $G^s(\R^0) \to F^a(\R^0) = \aF$ cannot be injective.
  Still, the composition
  $k_U \circ i_U \colon D(U) \to F^a(U)$ is an injection
  (cf.\ Remark below).

  On the other hand,
  $j_U \colon F^{\rho}(U) \to \rCinf(\ras{U},\rF)$ is
  induced by the homomorphism
  $\Cinf(U)^{\N} \to \Cinf(\ras{U},\nF)$ which takes
  $(f_n) \in \Cinf(U)^{\N}$ to the function
  $\ras{U} \to \rF,\ [x_n] \mapsto [f_n(x_n)]$.  It is clear
  by the definition that
  $I_U \colon D'(U) \to \rCinf(\ras{U},\rF)$ factors as a
  composition of $j_U$ with a smooth injection of
  differential vector spaces
  $i'_U \colon D'(U) \to F^{\rho}(U)$.
\end{proof}

\begin{remark}
  The algebra $F^a(U)$ can be regarded a smooth version of
  the algebra $\ras{\mathbb{E}}(U)$ of asymptotic functions
  introduced by \citep{Oberguggenberger-Todorov}.  Although
  $\ras{\mathbb{E}}(U)$ does not contain $\Gs(U)$, there is
  an embedding of $\D'(U)$ into $\ras{\mathbb{E}}(U)$ (see
  Theorem 5.7 of \citep{Oberguggenberger-Todorov}).
\end{remark}

%%%%%%%%%%%%%%%%%%%%%%%%%%%%%%%%%%%%%%%%%%%%%%%%%%%%%%%%%%%%
% Section 7
%%%%%%%%%%%%%%%%%%%%%%%%%%%%%%%%%%%%%%%%%%%%%%%%%%%%%%%%%%%%
\section{Applications of asymptotic functions to homotopy
  theory of diffeological spaces}
Quasi-asymptotic maps can be expected to provide ``weak
solutions'' to problems that are hard or impossible to solve
within the usual framework of smooth maps.  In this section,
we present some modest applications related to homotopy
theory of diffeological spaces.

\subsection{Quasi-asymptotic maps between diffeological
  spaces}

\begin{definition}
  Given $X,\, Y \in \Diff$ denote by $\wCinf(X,Y)$ the
  diffeological subspace of $\dg(\rCinf(\ras{\!X},\ras{Y}))$
  consisting of such morphisms $\ras{\!X} \to \ras{Y}$ in
  $\rDiff$ that restrict to a set map $X \to Y$.  Members of
  $\wCinf(X,Y)$ are called quasi-asymptotic maps from $X$ to
  $Y$.
  There are a smooth inclusion $\Cinf(X,Y) \to\wCinf(X,Y)$
  given by the unit $1 \to \dg \circ \tr$ and a smooth
  composition
  \[
    \wCinf(Y,Z) \times \wCinf(X,Y) \to \wCinf(X,Z)
  \]
  induced by the composition in $\rDiff$.  Thus we obtain
  a category $\wDiff$ enriched over $\Diff$ with
  diffeological spaces as objects and $\wCinf(X,Y)$ as the
  set of morphism from $X$ to $Y$.
\end{definition}
% \medskip

Beware that $\wDiff$ is no longer a concrete category
because $\wCinf(X,Y)$ is not in general a subset of
$\hom_{\Set}(X,Y)$.

% \medskip
\begin{proposition}
  \label{prp:properties_of_wDiff}
  $\wDiff$ is closed under small limits and colimits.
  % and is cartesian closed with $\wCinf(X,Y)$ as exponential
  % objects.  %%% False!
\end{proposition}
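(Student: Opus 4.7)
The plan is to derive the existence of small limits and colimits in $\wDiff$ from the corresponding constructions in $\rDiff$, exploiting that $\rDiff$ is closed under small limits and colimits by Theorem~\ref{thm:category_of_nonstandard_diffeological_spaces} together with the adjunction $\tr \dashv \dg \colon \Diff \rightleftarrows \rDiff$. The crucial observation is that even though $\wDiff$ is not a concrete category, every $\wDiff$-morphism $\phi \in \wCinf(X,Y)$ is by definition a $\rDiff$-morphism $\ras{X} \to \ras{Y}$, so a small diagram $D \colon J \to \wDiff$, $j \mapsto X_j$, induces a diagram $\ras{D} \colon J \to \rDiff$, $j \mapsto \ras{X_j}$, whose $\rDiff$-(co)limit we can exploit.

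For the limit case, let $L = \lim_j \ras{X_j}$ in $\rDiff$ with projections $\pi_j \colon L \to \ras{X_j}$, and take $X$ to be the diffeological subspace of $\dg L$ whose underlying set is $\{x \in \abs{L} : \pi_j(x) \in \abs{X_j} \text{ for every } j \in J\}$. Under the adjunction, the inclusion $X \hookrightarrow \dg L$ corresponds to a $\rDiff$-morphism $\ras{X} \to L$, and composing with $\pi_j$ yields projections $\ras{X} \to \ras{X_j}$ which lie in $\wCinf(X, X_j)$ by construction of $\abs{X}$. Given a $\wDiff$-cone $(\psi_j \colon \ras{Z} \to \ras{X_j})_j$, the universal property of $L$ in $\rDiff$ produces $\Psi \colon \ras{Z} \to L$; its adjoint $\hat\Psi \colon Z \to \dg L$ has image in $\abs{X}$ (because each $\psi_j$ restricts to a set map $Z \to X_j$), so by the subspace diffeology it factors as a smooth map $Z \to X$, whose image under $\tr$ is the required mediating morphism $\ras{Z} \to \ras{X}$.

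Dually, for the colimit case, let $C = \colim_j \ras{X_j}$ in $\rDiff$ with coinjections $\iota_j \colon \ras{X_j} \to C$, and take $X$ to be the diffeological subspace of $\dg C$ with underlying set $\bigcup_j \abs{\iota_j}(\abs{X_j}) \subseteq \abs{C}$. The adjoints $\bar\iota_j \colon X_j \to \dg C$ of $\iota_j$ factor through $X \subseteq \dg C$, and applying $\tr$ yields the coinjections $\ras{X_j} \to \ras{X}$ in $\wCinf(X_j, X)$. A $\wDiff$-cocone $(\psi_j)_j$ with apex $Y$ assembles into $\Psi \colon C \to \ras{Y}$ in $\rDiff$, and the composition $\ras{X} \to \ras{\dg C} \to C \xrightarrow{\Psi} \ras{Y}$ (first arrow $\tr$ applied to the subspace inclusion, second the counit) defines the mediating morphism in $\wCinf(X, Y)$, whose underlying-set restriction lands in $\abs{Y}$ because $\Psi$ maps each $\abs{\iota_j}(\abs{X_j})$ into $\abs{Y}$.

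The principal obstacle is verifying uniqueness of the mediating morphism in both cases, since a $\wDiff$-morphism $\ras{X} \to \ras{Y}$ is not determined by its underlying-set map $\abs{X} \to \abs{Y}$: agreement on the images of the $\abs{X_j}$ inside $\abs{\ras{X}}$ is therefore not automatic. One reduces the question to showing that the canonical counit compositions $\ras{X} \to L$ (for limits) and $\ras{X} \to C$ (for colimits) are suitably faithful in $\rDiff$; the first factor $\ras{X} \to \ras{\dg L}$ (resp.\ $\ras{X} \to \ras{\dg C}$) is monic because $\tr$ preserves finite limits, and a careful use of the triangle identities for $\tr \dashv \dg$ together with the subspace diffeology on $X$ should suffice to control the remaining counit factor.
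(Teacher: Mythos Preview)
Your route is quite different from the paper's one-line argument, which simply asserts that the forgetful functor $\rDiff \to \Set$ reflects small limits and colimits and that this passes to its restriction $\wDiff \to \Set$.  You instead attempt to build candidate (co)limits explicitly from those of $\rDiff$ via the adjunction $\tr \dashv \dg$.

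The gap you flag at the end is genuine and is not closed by the sketch you give.  In the limit case, uniqueness of the mediating morphism reduces, as you note, to the composite $\ras{X} \to \ras{\dg L} \xrightarrow{\epsilon_L} L$ being a monomorphism in $\rDiff$.  The first factor is indeed mono because $\tr$ preserves finite limits, but the second factor is the counit, and faithfulness of $\dg$ (which does follow from concreteness of both categories) only forces $\epsilon_L$ to be an \emph{epimorphism}, not a monomorphism; the triangle identities give no further leverage.  There is no general reason for $\epsilon_L$ to be mono: the underlying set of $\ras{\dg L}$ is a quotient of a coproduct indexed by all plots of $\dg L$, and the canonical map to $\abs{L}$ need not be injective.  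The colimit side has an additional problem you did not flag.  Your coinjections $\ras{X_j} \to \ras{X}$ lie in the image of $\tr$, whereas the structure maps $D(\alpha)$ of the diagram are arbitrary $\wDiff$-morphisms, so the cocone compatibility $\tr(\bar\iota_{j'}) \circ D(\alpha) = \tr(\bar\iota_j)$ is not automatic.  Post-composing with $\ras{X} \to C$ both sides become $\iota_j$ (by the cocone property of $C$), but deducing the original equality again requires $\ras{X} \to C$ to be mono---the same unproved hypothesis.  Thus both existence of a genuine cocone and uniqueness of the mediating morphism hinge on monicity of a counit-type map that you have not established.
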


\begin{proof}
  Since the forgetful functor $\rDiff \to \Set$ reflects
  small limits and colimits, so is its restriction
  $\wDiff \to \Set$.
\end{proof}

Typical examples of quasi-asymptotic maps are given by
piecewise smooth maps defined as follows.

% \medskip
\begin{definition}
  A continuous map $f \colon U \to \R^l$ defined on an open
  subset $U$ of $\R^k$ is said to be {piecewise smooth}
  if for each $x \in U$ there is an open subset
  $W \subset U$ such that $x \in \overline{W}$ and $f|W$ is
  smooth.
\end{definition}
% \medskip

The functions $\R^k \to \R$ below are apparently piecewise
smooth. 
\begin{quote}
  \begin{tabular}{ll}
    Euclidean norm: & $x \mapsto \norm{x}$ \\
    Maximum value:  & $(x_1,\cdots,x_k)
                      \mapsto \max\{x_1,\cdots,x_k\}$ \\
    Minimum value:  & $(x_1,\cdots,x_k)
                      \mapsto \min\{x_1,\cdots,x_k\}$
  \end{tabular}
\end{quote}
Note also that if $f \colon U \to \R^l$ is piecewise smooth
then so is its restriction % $f|V \colon V \to \R^l$
to arbitrary open subset $V \subset U$.

% \medskip
\begin{proposition}
  \label{prp:piecewise_smooth_map_is_quasi_asymptotic}
  Let $f$ be a continuous map from $X \subset \R^k$ to
  $Y \subset \R^l$.  Suppose $f$ extends to a piecewise
  smooth map $g \colon U \to \R^l$ defined on an open subset
  $U$ containing $X$.  Then $f$ is quasi-asymptotic, that
  is, there is $F \in \rCinf(\ras{X},\ras{Y})$ such that
  $F|X$ coincides with $f \colon X \to Y$.
\end{proposition}

\begin{proof}
  Let $g_j$ % ($1 \leq j \leq l$)
  be the $j$-th component of $g$ regarded as a distribution
  and put
  \[
    I_U(g) = (I_U(g_1),\cdots,I_U(g_l)) \in
    \rCinf(\ras{U},\rR^l).
  \]
  Let $\ras{\!g} \colon \ras{U} \to \rR^l$ be the continuous
  map which takes $[x_n] \in \ras{U}$ to
  $[g(x_n)] \in \rR^l$.  To prove the proposition it
  suffices to show that
  \begin{equation}
    \label{eq:piecewise_coincidence}
    I_U(g)(x) = \ras{\!g}(x)
    % = g(x) \ \ \text{for all} \ \ x \in U \subset \ras{U}
  \end{equation}
  holds for all $x \in U \subset \ras{U}$, because this
  means $F = I_U(g)|\,\ras{\!X}$ satisfies $F|X = f$.
  If $W$ is an open subset of $U$ such that $g$ is smooth on
  $W$ then \eqref{eq:piecewise_coincidence} holds for all
  $x \in \ras{W}$ by Proposition~\ref{prp:properties_of_I_U}
  (4).  But, as both $I_U(g)$ and $\ras{\!g}$ are continuous
  and $\rR^l$ is Hausdorff under $\bD$-topology,
  \eqref{eq:piecewise_coincidence} is valid for all
  $x \in \overline{\ras{W}} \subset \ras{U}$.
  Consequently, \eqref{eq:piecewise_coincidence} holds for
  all $x \in U$ because for every $x \in U$ there is an open
  subset $W \subset U$ such that
  $x \in \overline{W} \subset \overline{\ras{W}}$ and $g|W$
  is smooth.
\end{proof}

The following is also useful for constructing
quasi-asymptotic maps.

% \medskip
\begin{proposition}
  \label{prp:subduction_and_quasi_asymptoticity}
  Let $p \colon X \to Y$ and $f \colon Y \to Z$
  be maps between diffeological spaces.  Suppose $p$ is a
  smooth subduction.  Then $f$ is quasi-asymptotic if and
  only if so is the composite $f \circ p \colon X \to Z$.
\end{proposition}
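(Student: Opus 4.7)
The plan is to split the biconditional and dispatch each direction. The forward direction should be immediate: if $F \in \wCinf(Y,Z)$ extends $f$, then $F \circ \ras{p} \colon \ras{X} \to \ras{Z}$ is a morphism in $\rDiff$ restricting to $f \circ p$ on $X$, and since its image on $X$ lies in $F(Y) \subset Z$, we have $F \circ \ras{p} \in \wCinf(X,Z)$. I would spend essentially no words on this.

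The converse is the substantive step. My strategy is to invoke the adjunction $\tr \dashv \dg$ of Theorem~\ref{thm:category_of_nonstandard_diffeological_spaces} to recast quasi-asymptoticity in purely $\Diff$-theoretic terms. The natural bijection $\rDiff(\ras{A},\ras{B}) \cong \Diff(A,\dg(\ras{B}))$, combined with the explicit formula for the unit $\eta_A \colon A \to \dg(\ras{A})$ (which on underlying sets is the inclusion $A \hookrightarrow \abs{\ras{A}}$), shows that a set map $g \colon A \to B$ is quasi-asymptotic if and only if the set map $\tilde{g} \colon A \to \dg(\ras{B})$, sending $a$ to $g(a) \in B \subset \abs{\dg(\ras{B})}$, is a smooth map of diffeological spaces. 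Applying this with $g = f$ and with $g = f \circ p$, and using the obvious identity $\widetilde{f \circ p} = \tilde{f} \circ p$, the proposition reduces to: $\tilde{f} \colon Y \to \dg(\ras{Z})$ is smooth iff $\tilde{f} \circ p \colon X \to \dg(\ras{Z})$ is smooth. But this is exactly the universal property of the subduction $p$ (namely, for any diffeological space $W$ and set map $h \colon Y \to W$, $h$ is smooth iff $h \circ p$ is) applied to $W = \dg(\ras{Z})$.

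The only real obstacle I anticipate is verifying the reformulation step carefully: one must check that under the adjunction an extension $F \in \rDiff(\ras{Y},\ras{Z})$ of $f$ corresponds precisely to a smooth lift of the specific set map $\tilde{f}$. This comes down to unpacking $F^{\flat} = \dg(F) \circ \eta_Y$ on underlying sets, and follows from the explicit description of $\eta_Y$ recalled in the proof of Theorem~\ref{thm:category_of_nonstandard_diffeological_spaces}, together with the fact that morphisms of concrete sheaves are determined by their underlying set maps. A direct attempt to build $F$ by descending $G$ along $\ras{p}$ would be much harder, since two local lifts of a plot of $\ras{Y}$ through $\ras{p}$ agree on $Y$ but not in general on $\ras{Y}$; the adjunction argument neatly sidesteps this difficulty.
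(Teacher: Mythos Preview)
Your proof is correct and takes a genuinely different route from the paper's. For the converse direction the paper argues by direct descent: given a witness $G \in \rCinf(\ras{\!X},\ras{Z})$ for $f \circ p$, it observes that $\ras{p} = \tr(p)$ is a subduction in $\rDiff$ (since $\tr$ preserves colimits) and then asserts that $G$ factors as $F \circ \ras{p}$, producing the desired $F$. You instead use the adjunction $\tr \dashv \dg$ to recast quasi-asymptoticity of a set map $g \colon A \to B$ as smoothness of $\tilde g = \eta_B \circ g \colon A \to \dg(\ras{B})$ in $\Diff$, after which the statement becomes the standard universal property of the subduction $p$ applied to the target $\dg(\ras{Z})$. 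The paper's approach is terser but does not explain why the given $G$ should be constant on the fibers of $\ras{p}$---exactly the difficulty you flag, since $G$ is controlled only on $X$, not on all of $\ras{\!X}$. Your adjunction argument sidesteps this entirely: rather than trying to descend a particular $G$, it shows that \emph{some} extension of $f$ must exist by working with the fixed set map $\tilde f$, and the only real cost is the (straightforward) verification that the adjunction bijection matches extensions of $f$ with smooth structures on $\tilde f$.
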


\begin{proof}
  The ``only if'' part is obvious.  To prove the ``if''
  part, suppose that there exists
  $G \in \rCinf(\ras{\!X},\ras{\!Z})$ such that
  $G|X = f \circ p$ holds.
  Let $\ras{p} = \tr(f) \in \rCinf(\ras{\!X},\ras{\!Y})$.
  Then $\ras{p}$ is a subduction in $\rDiff$ because $\tr$
  preserves colimits.  Hence
  $G \colon \ras{\!X} \to \ras{\!Z}$ factors as a
  composition
  \[
    \ras{\!X} \xrightarrow{\ras{p}} \ras{Y} \xrightarrow{F}
    \ras{\!Z}
    % \quad (F \in \rCinf(\ras{Y},\ras{\!Z}))
  \]
  with $F \in \rCinf(\ras{Y},\ras{\!Z})$.  Thus we have
  $G|X = F|Y \circ p = f \circ p$, implying that $F|Y = f$
  holds as desired.
\end{proof}

%%%%%%%%%%
\subsection{Concatenation of quasi-asymptotic paths}
Let $I = [0,1]$ be the unit interval in $\R$ and $X$ be a
subspace of $\R^k$.  By an {quasi-asymptotic path} in
$X$ we mean a quasi-asymptotic map from $I$ to $X$.  Denote
by $\Path(X)$ the space of all quasi-asymptotic paths in
$X$, i.e.\ $\Path(X) = \wCinf(I,X)$.
We show that any two quasi-asymptotic paths
$\alpha,\, \beta$ such that $\alpha(1) = \beta(0)$ can be
concatenated together to form a new quasi-asymptotic path.
Let
\[
  \Path(X) \times_X \Path(X) = \{ (\alpha,\beta) \in
  \Path(X) \times \Path(X) \mid \alpha(1) = \beta(0) \}
\]
% The next proposition shows that {concatenation} of
% asymptotic paths can be realized as a smooth map from
% $\Path(X) \times_X \Path(X)$ to $\Path(X)$.

\begin{theorem}
  \label{thm:concatenation_of_asymptotic_paths}
  There is a smooth map
  $\Path(X) \times_X \Path(X) \xrightarrow{*} \Path(X)$
  % such that for any $(\alpha,\beta) \in \Path(X) \times_X
  % \Path(X)$ and $t \in I$ we have
  % \[
  %   \alpha \ast \beta\,(t) =
  %   \begin{cases}
  %     \alpha(2t), & 0 \leq t \leq 1/2
  %     \\
  %     \beta(2t-1), & 1/2 \leq t \leq 1
  %   \end{cases}
  % \]
  which takes
  $(\alpha,\beta) \in \Path(X) \times_X \Path(X)$ to a path
  $\alpha * \beta \in \Path(X)$ such that
  $\alpha \ast \beta\,(t)$ has value $\alpha(2t)$ if
  $0 \leq t \leq 1/2$ and $\beta(2t-1)$ if
  $1/2 \leq t \leq 1$.
\end{theorem}

\begin{proof}
  Let $\ell_1$ and $\ell_2$ be non-decreasing piecewise
  linear functions $\R \to I$ such that $\ell_1(t) = 2t$ for
  $0 \leq t \leq 1/2$ and $\ell_2(t) = 2t-1$ for
  $1/2 \leq t \leq 1$.
  Then $\ell_1|I$ and $\ell_2|I$ are quasi-asymptotic by
  Proposition~\ref{prp:piecewise_smooth_map_is_quasi_asymptotic},
  and hence so is $\alpha * \beta \colon I \to I$ given by
  $(\alpha * \beta)(t) = \alpha(\ell_1(t)) +
  \beta(\ell_2(t)) - \beta(0)$.  % \in X \subset \R^k$
\end{proof}

%%%%%%%%%%
\subsection{Smooth cell complexes and homotopy extension
  property}
As in the case of $\Top$ or
$\Diff$, there is a notion of homotopy in the category
$\bDiff$: a {homotopy} between morphisms $f,\, g \in
\bCinf(X,Y)$ is a morphism $H \in \bCinf(X \times
\bas{\!I},Y)$ such that $H \circ i_0 = f$ and $H \circ i_1 =
g$, where $i_{\alpha}$ is the inclusion $X \to X \times
\{\alpha\} \subset X \times \bas{\!I}$ for $\alpha = 0,\,1$.
Likewise, a {quasi-asymptotic homotopy} between
quasi-asymptotic maps $f,\,g \in
\wCinf(X,Y)$ is defined to be a quasi-asymptotic map $H \in
\wCinf(X \times I,Y)$ satisfying $H \circ i_0 = f$ and $H
\circ i_1 = g$.

The following is a diffeological analog to the notion of
relative cell complex due to \citep{Hovey} which plays a
crucial role in the homotopy theory of topological spaces.

% \medskip
\begin{definition}
  \label{dfn:relative cell complex}
  A pair of diffeological spaces $(X,A)$ is called a {smooth
    relative cell complex} if there is an ordinal $\delta$ and a
  $\delta$-sequence $Z \colon \delta \to \Diff$ such that the
  composition $Z_{0} \to \mbox{colim}Z$ coincides with the
  inclusion $i \colon A \to X$ and for each successor ordinal
  $\beta < \delta$, there is a smooth map
  $\phi_{\beta} \colon \bI^k \to Z_{\beta-1}$, called an
  {attaching map}, such that $Z_{\beta}$ is diffeomorphic to
  the adjunction space $Z_{\beta-1} \cup_{\phi_{\beta}} I^k$,
  % i.e.\ a pushout of % the diagram
  % \[
  %   Z_{\beta-1} \xleftarrow{\phi_{\beta}} \bI^k \hookrightarrow
  %   I^k
  % \]
  i.e.\ we have a pushout square
  \[
    \vcenter{%
      \xymatrix{%
        \bI^k \ar[d]_-{\cap} \ar[r]^-{\phi_{\beta}}
        & Z_{\beta-1} \ar[d]
        \\
        I^k \ar[r]^-{\Phi_{\beta}}
        & Z_{\beta},
      }%
    }%
  \]
  % that is, $Z_{\beta+1}$ is an adjunction space
  % $Z_{\beta} \cup_{(\phi_{\beta + 1},\,k_{d_{\beta + 1}})} I^{d_{\beta
  %     + 1}}$ for some $d_{\beta + 1} \geq 0$.
  In particular, if $A = \emptyset$ then $X$ is called a
  {smooth cell complex}.
\end{definition}
% \medskip

In $\Top$, relative cell complexes have a useful property
that they satisfy homotopy extension property.
Unfortunately, its smooth version no longer holds as can be
seen from the fact that $\bI \times I \cup I \times \{0\}$
is not a smooth deformation retract of $I^2$.  But we can
improve the situation by extending morphisms from smooth
maps to quasi-asymptotic ones.  More precisely, we have the
following.

% \medskip
\begin{theorem}
  \label{thm:HEP}
  Let $(X,A)$ be a smooth relative cell complex and
  $f \colon X \to Y$ be a quasi-asymptotic map.  Suppose
  there is a quasi-asymptotic homotopy
  $h \colon A \times I \to Y$ with $h_0 = f|A$.  Then $h$
  extends to a quasi-asymptotic homotopy
  $H \colon X \times I \to Y$ with $H_0 = f$ and
  $H|A \times I = h$.
\end{theorem}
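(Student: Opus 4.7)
The plan is to proceed by transfinite induction along the $\delta$-sequence $Z \colon \delta \to \Diff$ realizing $(X,A)$ as a smooth relative cell complex. Put $H_{0} = h$ on $Z_{0} \times I = A \times I$; at each limit ordinal pass to the colimit in $\wDiff$ (which is cocomplete by Proposition~\ref{prp:properties_of_wDiff}); and at each successor ordinal $\beta$ extend $H_{\beta-1}$ across the newly attached cell to $H_{\beta}$ on $Z_{\beta} \times I$, arranging $(H_{\beta})_{0} = f|_{Z_{\beta}}$ throughout.

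For the successor step I would exploit the pushout presentation $Z_{\beta} = Z_{\beta-1} \cup_{\phi_{\beta}} I^{k}$. Crossing with $I$ preserves this pushout, so building $H_{\beta}$ reduces to supplying a quasi-asymptotic map $I^{k} \times I \to Y$ whose restriction to $\partial I^{k} \times I$ equals $H_{\beta-1} \circ (\phi_{\beta} \times \id_{I})$ and whose restriction to $I^{k} \times \{0\}$ equals the pullback of $f|_{Z_{\beta}}$ along the characteristic map of the cell. These two pieces agree on $\partial I^{k} \times \{0\}$ and, by cocompleteness of $\wDiff$, assemble into a single quasi-asymptotic map $G \colon B \to Y$ on the boundary box
\[
  B \;=\; \partial I^{k} \times I \,\cup\, I^{k} \times \{0\} \;\subset\; I^{k+1}.
\]

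I would then produce the classical prism retraction $r \colon I^{k} \times I \to B$ by radial projection from the point $p = (c,2) \in \R^{k+1}$, where $c$ is the center of $I^{k}$: for each $(x,t) \in I^{k} \times I$ let $r(x,t)$ be the unique point at which the ray from $p$ through $(x,t)$ meets $B$. A direct calculation shows $r$ is continuous, fixes $B$ pointwise, and is smooth on each of the two regions of $I^{k} \times I$ cut out by the cone over $\partial I^{k} \times \{0\}$ with apex $p$. Triangulating $I^{k} \times I$ compatibly with this partition exhibits $r$ as a piecewise smooth map on a $(k{+}1)$-polytope, so Proposition~\ref{prp:piecewise_smooth_map_is_quasi_asymptotic} promotes it to a quasi-asymptotic retraction $\tilde r \in \wCinf(I^{k} \times I, B)$ with $\tilde r|_{B} = \id_{B}$. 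The composite $G \circ \tilde r$ is then the desired quasi-asymptotic extension across the cell, and transfinite assembly delivers $H$ on all of $X \times I$.

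The principal obstacle is the gluing step that yields $G$: one must verify that the compatible pair on $\partial I^{k} \times I$ and on $I^{k} \times \{0\}$ truly determines a single morphism $\ras{B} \to \ras{Y}$ in $\rDiff$, not merely a pair of morphisms coinciding on the underlying set of their intersection. This is where quasi-asymptoticity is indispensable: no smooth retraction $I^{k+1} \to B$ exists, which is exactly why the naive smooth HEP fails, but the obstruction dissolves in $\wDiff$ where the piecewise smooth $r$ becomes legitimate via Proposition~\ref{prp:piecewise_smooth_map_is_quasi_asymptotic}. Once this gluing is in place, the remaining verifications (naturality at limit stages, and compatibility of the cellwise extensions with $h$ on $A$) are routine.
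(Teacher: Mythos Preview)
Your approach is essentially the paper's: transfinite induction along the $\delta$-sequence, with the successor step handled by the piecewise-linear radial retraction $I^{k+1} \to L^{k}$ from the point $(1/2,\dots,1/2,2)$, promoted to a quasi-asymptotic map via Proposition~\ref{prp:piecewise_smooth_map_is_quasi_asymptotic}. The paper packages this as Lemma~\ref{lmm:asymptotic deformation retraction} and its Corollary~\ref{crl:extendability_of_quasi_asymptotic_maps}.

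The one place where the paper is more explicit than your sketch is precisely the gluing issue you flag as the principal obstacle. Rather than arguing that $-\times I$ preserves the pushout and then invoking cocompleteness of $\wDiff$ abstractly, the paper proves directly (Lemma~\ref{lmm:subduction of pushout}) that the natural map
\[
  Z_{\beta-1}\times I \;\textstyle\coprod\; I^{k}\times I \longrightarrow Z_{\beta}\times I
\]
is a subduction in $\Diff$, and then applies Proposition~\ref{prp:subduction_and_quasi_asymptoticity}: since $H_{\beta-1}\sqcup K_{\beta}$ is quasi-asymptotic on the coproduct and the map down is a smooth subduction, the induced $H_{\beta}$ on $Z_{\beta}\times I$ is quasi-asymptotic. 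This subduction-plus-Proposition~\ref{prp:subduction_and_quasi_asymptoticity} device is exactly what dissolves your worry about whether the two compatible pieces assemble to a genuine morphism $\ras{B}\to\ras{Y}$; you should invoke it (or prove the analogous statement for $B = \partial I^{k}\times I \cup I^{k}\times\{0\}$) rather than leave the gluing as an unresolved obstacle.
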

% \medskip

To prove this, we need several lemmas.
For $k \geq 0$ let
\[
  L^k = \bI^k \times I \cup I^k \times \{0\} \subset
  I^{k+1}
\]
and $i \colon L^k \to I^{k+1}$ be the inclusion.

% \medskip
\begin{lemma}
  \label{lmm:asymptotic deformation retraction}
  $L^k$ is a deformation retract of $I^{k+1}$ in $\wDiff$.
\end{lemma}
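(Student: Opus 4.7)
The plan is to exhibit an explicit piecewise linear radial retraction and then invoke Proposition~\ref{prp:piecewise_smooth_map_is_quasi_asymptotic} to promote it to a morphism in $\wDiff$. Fix the point $p = (1/2,\ldots,1/2,2) \in \R^{k+1}$, which lies strictly above the top face $I^k \times \{1\}$ of $I^{k+1}$. For each $x \in I^{k+1}$, consider the ray $\{p + s(x-p) : s \geq 1\}$; its last coordinate $2 + s(x_{k+1}-2)$ strictly decreases from $x_{k+1} \leq 1$ toward $-\infty$, so the ray must exit $I^{k+1}$ at a unique point of $L^k$, and I define $r(x)$ to be that exit point. By construction $r$ fixes $L^k$ pointwise, and the closed cones from $p$ over the facets of $L^k$ decompose $I^{k+1}$ into finitely many polytopes on each of which $r$ is affine; hence $r$ is piecewise linear, and in particular piecewise smooth.

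Next I form the straight-line homotopy $H \colon I^{k+1} \times I \to I^{k+1}$ defined by $H(x,t) = (1-t)x + t\,r(x)$. Since $I^{k+1}$ is convex $H$ lands in $I^{k+1}$; it satisfies $H(\cdot,0) = \id$, $H(\cdot,1) = i \circ r$, and $H(x,t) = x$ for every $x \in L^k$ and $t \in I$, because $r|_{L^k} = \id$. Refining the cone decomposition of $I^{k+1}$ by crossing each cell with $I$ and triangulating the resulting prisms gives $I^{k+1} \times I$ the structure of a $(k{+}2)$-polytope on whose simplices $H$ is affine, so $H$ is piecewise linear as well.

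Both $I^{k+1}\subset \R^{k+1}$ and $I^{k+1}\times I \subset \R^{k+2}$ are polytopes in Euclidean space, so Proposition~\ref{prp:piecewise_smooth_map_is_quasi_asymptotic} produces quasi-asymptotic extensions of $r$ and $H$, giving $r \in \wCinf(I^{k+1},L^k)$ and $H \in \wCinf(I^{k+1}\times I, I^{k+1})$. The identities $r \circ i = \id_{L^k}$, $H \circ i_0 = \id$, $H \circ i_1 = i \circ r$, and $H|_{L^k \times I} = \pr_1$ hold as equalities of underlying set-maps, hence automatically pass to the quasi-asymptotic lifts (whose effect on underlying sets is, by definition of $\wCinf$, the given set-map). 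This exhibits $L^k$ as a deformation retract of $I^{k+1}$ in $\wDiff$.

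The main subtlety I anticipate is ensuring that $H$, rather than merely $r$, qualifies as piecewise smooth in the sense of Proposition~\ref{prp:piecewise_smooth_map_is_quasi_asymptotic}: one must exhibit a triangulation of the ambient polytope $I^{k+1}\times I$, not merely a piecewise structure separately in the $x$- and $t$-directions. The cone-times-interval prism decomposition above does this, but writing out the triangulation carefully is the only nontrivial bookkeeping; everything else reduces to convex-combinations and the established proposition.
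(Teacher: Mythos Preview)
Your geometric setup is the same as the paper's: the radial projection $r$ from $(1/2,\dots,1/2,2)$ and the straight-line homotopy $H(x,t)=(1-t)x+t\,r(x)$. The gap is in the last step, where you assert that the identities $r\circ i=\id_{L^k}$, $H\circ i_0=\id$, $H\circ i_1=i\circ r$, $H|_{L^k\times I}=\pr_1$ ``automatically pass to the quasi-asymptotic lifts.'' They do not. The paper warns explicitly that $\wDiff$ is \emph{not} a concrete category: $\wCinf(X,Y)$ is not a subset of $\hom_{\Set}(X,Y)$, so distinct quasi-asymptotic maps may share the same underlying set-map. Proposition~\ref{prp:piecewise_smooth_map_is_quasi_asymptotic} produces lifts $\hat r$ and $\hat H$ by mollifying Tietze extensions, and these are only guaranteed to agree with $r$ and $H$ on standard points. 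There is no reason $\hat H\circ\ras{i_0}$ should equal $\id$ on all of $\ras{I^{k+1}}$, nor that $\hat H\circ\ras{i_1}$ should equal $i\circ\hat r$, since the two lifts arise from independent mollifications on different domains.

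The paper avoids this by lifting only $R$ to $\widehat R$ via Proposition~\ref{prp:piecewise_smooth_map_is_quasi_asymptotic}, and then \emph{defining} the homotopy by the formula $H(x,t)=(1-t)\widehat R(x)+tx$. Because $\rR$ is a smooth field, this expression is automatically a morphism in $\rDiff$, and the endpoint identities $H|_{t=0}=i\circ\widehat R$ and $H|_{t=1}=\id$ then hold on all of $\ras{I^{k+1}}$ by construction, not merely on standard points. (The retraction identity on $\ras{L^k}$ still requires an appeal to the explicit form of the lift; the paper invokes ``the construction of $\widehat R$'' for this, and your proposal does not address it either.) The missing idea, in short, is to build the homotopy algebraically from the lifted retraction rather than to lift the homotopy separately.
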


\begin{proof}
  Let
  $g \colon (-1,2)^{k+1} \to \bI^k \times [0,2) \cup I^k
  \times \{0\}$ be the radial projection from
  $P = (1/2,\cdots,1/2,2)$.  Then $g$ is piecewise linear
  (hence piecewise smooth) with respect to the decomposition
  of $(-1,2)^{k+1}$ determined by the vertices of $I^{k+1}$
  and $P$.
  Hence $p = g|I^{k+1} \colon I^{k+1} \to L^k$ is a
  quasi-asymptotic map given as a restriction of
  $\widehat{p} = I_U(g)|\,\ras{I}^{k+1} \in
  \wCinf(\ras{I}^{k+1},\ras{\!L}^k)$.
  Moreover, for every $\sigma \in L^k(U)$ we have
  \[
    \widehat{p} \circ \ras{\sigma} = I_U(g) \circ
    \ras{\sigma} = \ras{(g \circ \sigma)} = \ras{\sigma} \in
    \ras{\!L}^k(\ras{U})
  \]
  implying that
  $\widehat{p} \colon \ras{I}^{k+1} \to \ras{\!L}^k$ is a
  deformation retraction with retracting homotopy
  $\ras{i} \circ \widehat{p} \simeq 1$ rel $\ras{\!L}^k$
  given by
  \[
    H(x,t) = (1-t)\widehat{p}(x) + tx, \quad (x,t) \in
    \ras{I}^{k+1} \times \ras{I}.
  \]
  Thus $p \colon I^{k+1} \to L^k$ is a deformation
  retraction in $\wDiff$ with retracting homotopy given by
  the quasi-asymptotic map $H|I^{k+1} \times I$.
\end{proof}

Clearly, the lemma above implies the following.

% \medskip
\begin{corollary}
  \label{crl:extendability_of_quasi_asymptotic_maps}
  Any quasi-asymptotic map $L^k \to X$ extends to a
  quasi-asymptotic map $I^{k+1} \to X$.
\end{corollary}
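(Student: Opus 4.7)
The plan is to read off the corollary directly from Lemma~\ref{lmm:asymptotic deformation retraction}: a deformation retraction gives, in particular, a retraction, and extending along a retraction is the universal reason for the homotopy-theoretic homotopy extension property.

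More precisely, given $f \in \wCinf(L^k, X)$, I would define the extension as the composite
\[
  \widetilde{f} := f \circ \widehat{R} \colon I^{k+1} \xrightarrow{\widehat{R}} L^k \xrightarrow{f} X,
\]
where $\widehat{R} \in \wCinf(I^{k+1}, L^k)$ is the quasi-asymptotic retraction constructed in the preceding lemma. First I would note that $\widetilde{f}$ is quasi-asymptotic because $\wDiff$ is a category enriched over $\Diff$: the composition pairing $\wCinf(L^k,X) \times \wCinf(I^{k+1},L^k) \to \wCinf(I^{k+1},X)$ is the restriction of the composition in $\rDiff$ (cf.\ Lemma~\ref{lmm:composition_of_nonstandard_maps}) that defines $\wDiff$ in Definition preceding Proposition~\ref{prp:properties_of_wDiff}.

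Next I would verify that $\widetilde{f}$ extends $f$, i.e.\ $\widetilde{f} \circ i = f$, where $i \colon L^k \to I^{k+1}$ is the inclusion. This is precisely the statement that $\widehat{R}$ is a retraction, which was recorded in the proof of Lemma~\ref{lmm:asymptotic deformation retraction} via the observation that $\widehat{R} \circ \sigma = \sigma$ for every plot $\sigma$ of $L^k$; taking $\sigma$ to be the identity plot of $L^k$ (or equivalently, evaluating on points of $L^k \subset \ras{L^k}$) gives $\widehat{R} \circ i = \mathrm{id}_{L^k}$, whence $\widetilde{f} \circ i = f \circ \widehat{R} \circ i = f$.

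There is essentially no obstacle here beyond unpacking definitions; the entire content is already loaded into the existence of the retraction $\widehat{R}$ and the fact that quasi-asymptotic maps are closed under composition. Thus the corollary is immediate, and the proof can be given in a line or two.
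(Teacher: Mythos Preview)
Your proposal is correct and matches the paper's intended argument: the paper simply records the corollary as an immediate consequence of Lemma~\ref{lmm:asymptotic deformation retraction} without further proof, and the one-line justification is exactly to precompose a given $f \in \wCinf(L^k,X)$ with the quasi-asymptotic retraction $\widehat{R}$. The only cosmetic point is that $L^k$ has no ``identity plot'' (it is not a Euclidean open set), but the retraction identity $\widehat{R}\circ i = \id_{L^k}$ is already asserted in the lemma, so you can cite it directly.
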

% \medskip

We also need the following lemma.

% \medskip
\begin{lemma}
  \label{lmm:subduction of pushout}
  Let $Z = Y \cup_\phi I^k$ be an adjunction space given by
  a smooth map $\phi \colon \bI^k \to Y$ and the inclusion
  $\bI^k \to I^k$.  Then the map
  \[
    \textstyle i \times 1 \bigcup \Phi \times 1 \colon Y \times I
    \coprod I^k \times I \to Z \times I
  \]
  induced by the natural maps $i \colon Y \to Z$ and
  $\Phi \colon I^k \to Z$ is a subduction.
\end{lemma}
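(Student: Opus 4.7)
The plan is to exploit the cartesian closure of $\Diff$ (Theorem~\ref{thm:category_of_diffeological_spaces}(3)): since the functor $-\times I \colon \Diff \to \Diff$ is a left adjoint (its right adjoint being $\Cinf(I,-)$), it preserves all small colimits. This is precisely the formal property that makes the statement immediate.

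First I would observe that $Z = Y \cup_\phi I^k$ is, by definition, the pushout in $\Diff$ of the diagram $Y \xleftarrow{\phi} \bI^k \hookrightarrow I^k$. Computed concretely as a coequalizer, the canonical map $Y \coprod I^k \to Z$ is a subduction, being the quotient by the smallest equivalence relation identifying $\phi(x)$ with $x$ for $x \in \bI^k$.

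Next, I would apply the functor $-\times I$ to the pushout square above. Since $-\times I$ preserves colimits, $Z \times I$ is itself the pushout of
\[
  Y \times I \xleftarrow{\phi \times 1} \bI^k \times I \hookrightarrow I^k \times I,
\]
and the canonical map from $(Y \coprod I^k) \times I$ onto this pushout is therefore a subduction. Using the natural isomorphism $(Y \coprod I^k) \times I \cong Y \times I \coprod I^k \times I$ (again since $-\times I$ preserves coproducts), this canonical subduction is exactly $i \times 1 \cup \Phi \times 1$, which is what was to be shown.

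I do not foresee any genuine obstacle: the only thing to watch is that the map from a coproduct to a pushout really is a subduction in $\Diff$, which holds because the pushout is literally a quotient of that coproduct and a quotient map is a subduction by definition. No explicit plot-level checking is needed because the argument is entirely formal once cartesian closure is invoked.
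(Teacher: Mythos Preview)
Your proof is correct but takes a genuinely different route from the paper's. The paper gives a direct plot-level verification: a plot $P\colon U\to Z\times I$ splits as $(\sigma,\sigma')$ with $\sigma$ a plot of $Z$ and $\sigma'$ a plot of $I$; since the quotient map $Y\coprod I^k\to Z$ is a subduction, $\sigma$ locally lifts to some $Q_\alpha$ into $Y$ or $I^k$, and then $(Q_\alpha,\sigma')$ is the desired local lift of $P$. Your argument instead invokes cartesian closure of $\Diff$ to deduce that $-\times I$ preserves colimits, hence carries the subduction $Y\coprod I^k\to Z$ to a subduction after crossing with $I$. The paper's approach is more self-contained and transparent, using only the definitions of product and quotient diffeology; your approach is more conceptual, requires no explicit plot manipulation, and immediately generalizes (replace $I$ by any diffeological space $W$). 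One small point worth making explicit in your write-up: that ``$-\times I$ preserves colimits'' really does force the map to be a subduction follows because colimits in $\Diff$ are computed as set-theoretic colimits equipped with the final diffeology, so the canonical map to any colimit is automatically a subduction, and preservation of the colimit identifies the two candidate diffeologies on $|Z|\times I$.
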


\begin{proof}
  Let $P \colon U \to Z \times I$ be a plot of $Z \times I$ given
  by $P(r) = (\sigma(r),\sigma'(r))$, and let $r \in U$.  Since
  $\sigma$ is a plot of $Z$, there exists a plot
  $Q_1 \colon V \to Y$ such that $\sigma |V = i \circ Q_1$ holds,
  or a plot $Q_2 \colon V \to I^k$ such that
  $\sigma|V = \Phi \circ Q_2$ holds.  In either case, we have
  \[
    \textstyle P|V = (i \times 1 \bigcup \Phi \times 1) \circ
    (Q_{\alpha},\sigma')|V,
  \]
  where $\alpha$ is either $1$ or $2$.  This means
  $i \times 1 \bigcup \Phi \times 1$ is a subduction.
\end{proof}

We are now ready to prove Theorem~\ref{thm:HEP}.

% \begin{proof}[Proof of Theorem \ref{thm:HEP}]
Let $Z \colon \delta \to \Diff$ be a $\delta$-sequence such
that the composition $Z_0 \to \colim Z$ is the inclusion
$A \to X$.  We construct a quasi-asymptotic homotopy
$H \colon X \times I \to Y$ by transfinite induction on
$\beta < \delta$.
Suppose $\beta$ is a successor ordinal, and we already have
a quasi-asymptotic homotopy
$H_{\beta-1} \colon Z_{\beta-1} \times I \to Y$ satisfying
\[
  H_{\beta-1}|X \times \{0\} = f|Z_{\beta-1}, \quad
  H_{\beta-1}|A \times I = h
\]
Then we have a commutative diagram
\[
  \xymatrix@C=46pt@R=36pt{%
    \bI^k \times I \cup I^k \times \{0\}
    \ar[r]^-{(\phi_{\beta} \times 1) \cup \Phi_{\beta}}
    \ar[d] & Z_{\beta-1} \times I \cup \Phi_{\beta}(I^k)
    \times \{0\} \ar[r]^-{H_{\beta-1} \cup f} \ar[d] & Y
    \\
    I^k \times I \ar[r]^-{\Phi_{\beta} \times 1}
    \ar@{.>}[urr] & Z_{\beta} \times I \ar@{.>}[ur] }%
\]
By
Corollary~\ref{crl:extendability_of_quasi_asymptotic_maps}
the composition of upper arrows can be extended to
% a quasi-asymptotic map
$K_{\beta} \colon I^k \times I \to Y$, and we obtain a
commutative diagram
\[
  \xymatrix@C=50pt{%
    Z_{\beta-1} \times I \coprod I^k \times I
    \ar[r]^-{H_{\beta-1} \bigcup K_{\beta}}
    \ar[d]_-{i_{\beta} \times 1 \bigcup \Phi_{\beta} \times
      1} & Y
    \\
    Z_{\beta} \times I \ar[ru]_-{H_{\beta}} }%
\]
where $i_{\beta}$ is the inclusion
$Z_{\beta-1} \to Z_{\beta}$.  Since
$i_{\beta} \times 1 \bigcup \Phi_{\beta} \times 1$ is a
subduction by Lemma~\ref{lmm:subduction of pushout}, and
since $H_{\beta-1} \bigcup K_{\beta}$ is quasi-asymptotic,
we conclude by
Proposition~\ref{prp:subduction_and_quasi_asymptoticity}
that $H_{\beta}$ is quasi-asymptotic, too.

Thus, by transfinite induction on the $\delta$-sequence $Z$
we obtain a quasi-asymptotic homotopy
$H \colon X \times I \to Y$ extending
$h \colon A \times I \to Y$, completing the proof of
Theorem~\ref{thm:HEP}.
% \end{proof}

% \section*{Declarations}
% \textbf{Ethical Approval} \ Not applicable.
% \\ \\
% \textbf{Funding} \ No funding was received towards this
% work.  \\ \\
% \textbf{Availability of data and materials} \
% Not applicable.

%%%%%%%%%%%%%%%%%%%%%%%%%%%%%%%%%%%%%%%%%%%%%%%%%%%%%%%%%%%%
% References
%%%%%%%%%%%%%%%%%%%%%%%%%%%%%%%%%%%%%%%%%%%%%%%%%%%%%%%%%%%%
\bibliographystyle{plainnat}

\begin{thebibliography}{99}
\bibitem[Colombeau(2013)]{Colombeau} Colombeau, J.\ F.,
  2013, \emph{Nonlinear Generalized Functions: their origin,
    some developments and recent advances}, S\~{a}o Paulo
  J.\ Math.\ Sci., \textbf{7} (2013), 201--239.
\bibitem[Fr\"{o}hlicher and Kriegl(1988)]{Frohlicher-Kriegl}
  Fr\"{o}hlicher, A., and A.\ Kriegl, 1988, \emph{Linear
    spaces and differentiation theory}, Pure Appl.\ Math.,
  \textbf{13}, Wiley, Hoboken, NJ.
\bibitem[Giordano(2021)]{Giordano} Giordano, P., 2021,
  \emph{A Grothendieck topos of generalized functions I:
    Basic theory}, arXiv:2101.04492.
\bibitem[Giordano and Wu(2015)]{Giordano-Wu} Giordano, P.\
  and E.\ Wu, 2015, \emph{Categorical framework for
    generalized functions}, Arab.\ J.\ Math, \textbf{4}
  (2015), 301--328.
\bibitem[Grosser et al.(2001)]{GKOS} Grosser, M., M.\
  Kunzinger, M.\ Oberguggenberger and R.\ Steinbauer, 2001,
  \emph{Geometric theory of generalized functions with
    application to general relativity}, Math.\ Appl.,
  \textbf{537}, Springer-Science+Business Media, B.V.
\bibitem[Hovey(1999)]{Hovey} Hovey, M., 1999, \emph{Model
    categories}, Math.\ Surveys Monogr., \textbf{63},
  Amer.\ Math.\ Soc., Providence, RI.
\bibitem[Iglesias-Zemmour(2013)]{Zemmour} Iglesias-Zemmour,
  P., 2013, \emph{Diffeology}, Math.\ Surveys Monogr.,
  \textbf{165}, Amer.\ Math.\ Soc., Providence, RI.
\bibitem[Kock and Reyes(2006)]{Kock-Reyes} Kock, A.\ and G.\
  E.\ Reyes, 2006, \emph{Distributions and heat equation in
    SDG}, Cah.\ Topol.\ G\'{e}om.\ Diff\'{e}r.\ Cat\'{e}g.,
  \textbf{47} (2006), 2--28.
\bibitem[Kriegle and Michor(1997)]{Kriegle-Michor} Kriegl,
  A.\ and P.\ W.\ Michor, 1997, \emph{The convenient setting
    of global analysis}, Math.\ Surveys Monogr.,
  \textbf{53}, Amer.\ Math.\ Soc., Providence, RI.
\bibitem[Oberguggenberger and
  Todorov(1998)]{Oberguggenberger-Todorov} Oberguggenberger,
  M.\ and T.\ D.\ Todorov, 1998, \emph{An embedding of
    Schwartz distributions in the algebra of asymptotic
    functions}, Int.\ J.\ Math.\ Math.\ Sci., \textbf{21}
  (1998), 417--428.
\bibitem[Schwartz(1954)]{Schwartz} Schwartz, L., 1954,
  \emph{Sur l'impossibilit\'{e} de la multiplication des
    distributions}, C.\ R.\ Acad.\ Sci.\ Paris, \textbf{239}
  (1954), 847--848.
\end{thebibliography}
% \bibliographystyle{abbrvnnat}

%%%%%%%%%%%%%%%%%%%%%%%%%%%%%%%%%%%%%%%%%%%%%%%%%%%%%%%%%%%%%%%
\end{document}